\providecommand{\tabularnewline}{\\}
\numberwithin{equation}{section}
\numberwithin{figure}{section}
\theoremstyle{plain}
\newtheorem{thm}{\protect\theoremname}
\theoremstyle{plain}
\newtheorem{lem}[thm]{\protect\lemmaname}
\theoremstyle{definition}
\newtheorem{defn}[thm]{\protect\definitionname}
\theoremstyle{plain}
\newtheorem{prop}[thm]{\protect\propositionname}
\theoremstyle{remark}
\newtheorem{rem}[thm]{\protect\remarkname}
\theoremstyle{plain}
\newtheorem{cor}[thm]{\protect\corollaryname}
\newtheorem{assump}{Assumption}
\providecommand{\corollaryname}{Corollary}
\providecommand{\definitionname}{Definition}
\providecommand{\lemmaname}{Lemma}
\providecommand{\propositionname}{Proposition}
\providecommand{\remarkname}{Remark}
\providecommand{\theoremname}{Theorem}
\begin{document}
\global\long\def\ve{\varepsilon}%
\global\long\def\R{\mathbb{R}}%
\global\long\def\Rn{\mathbb{R}^{n}}%
\global\long\def\Rd{\mathbb{R}^{d}}%
\global\long\def\E{\mathbb{E}}%
\global\long\def\P{\mathbb{P}}%
\global\long\def\bx{\mathbf{x}}%
\global\long\def\vp{\varphi}%
\global\long\def\ra{\rightarrow}%
\global\long\def\smooth{C^{\infty}}%
\global\long\def\Tr{\mathrm{Tr}}%
\global\long\def\bra#1{\left\langle #1\right|}%
\global\long\def\ket#1{\left|#1\right\rangle }%
\global\long\def\Re{\mathrm{Re}}%
\global\long\def\Im{\mathrm{Im}}%
\global\long\def\bsig{\boldsymbol{\sigma}}%
\global\long\def\btau{\boldsymbol{\tau}}%
\global\long\def\bmu{\boldsymbol{\mu}}%
\global\long\def\bx{\boldsymbol{x}}%
\global\long\def\bups{\boldsymbol{\upsilon}}%
\global\long\def\bSig{\boldsymbol{\Sigma}}%
\global\long\def\bt{\boldsymbol{t}}%
\global\long\def\bs{\boldsymbol{s}}%
\global\long\def\by{\boldsymbol{y}}%
\global\long\def\brho{\boldsymbol{\rho}}%
\global\long\def\ba{\boldsymbol{a}}%
\global\long\def\bb{\boldsymbol{b}}%
\global\long\def\bz{\boldsymbol{z}}%
\global\long\def\bc{\boldsymbol{c}}%
\global\long\def\balpha{\boldsymbol{\alpha}}%
\global\long\def\bbeta{\boldsymbol{\beta}}%
\global\long\def\T{\mathrm{T}}%
\global\long\def\trip{\vert\!\vert\!\vert}%
\global\long\def\lrtrip#1{\left|\!\left|\!\left|#1\right|\!\right|\!\right|}%
\global\long\def\eps{\epsilon}%
\global\long\def\blam{\boldsymbol{\lambda}}%

\title{Toward optimal-scaling DFT: \\
Stochastic Hartree theory in the thermodynamic and complete basis
set limits at arbitrary temperature}
\author{Yuhang Cai\thanks{University of California, Berkeley} \and Michael
Lindsey\footnotemark[1] \thanks{Lawrence Berkeley National Laboratory}}
\maketitle
\begin{abstract}
We present the first mathematical analysis of stochastic density functional
theory (DFT) in the context of the Hartree approximation. We motivate
our analysis via the notion of nearly-optimal or $\tilde{O}(n)$ scaling
with respect to the number $n$ of computational degrees of freedom,
independent of the number of electrons, in both the thermodynamic
and complete basis set limits. Indeed, the promise of such scaling
is the primary motivation for stochastic DFT relative to conventional
orbital-based approaches, as well as deterministic orbital-free alternatives.
We highlight three key targets for mathematical attention, which are
synthesized in our algorithm and analysis. First, we identify a particular
stochastic estimator for the Hartree potential whose sample complexity
is essentially independent of the discretization size. Second, we
reformulate the self-consistent field iteration as a stochastic mirror
descent method where the Fermi-Dirac entropy plays the role of the
Bregman potential, and we prove a nearly discretization-independent
bound on the number of iterations needed to reach fixed accuracy.
Third, motivated by the estimator, we introduce a novel pole expansion
scheme for the square-root Fermi-Dirac operator, preserving $\tilde{O}(n)$
cost per mirror descent iteration even in the complete basis set limit.
Combining these ingredients, we establish nearly-optimal scaling in
both limits of interest under reasonable assumptions on the basis
sets chosen for discretization. Extensive numerical experiments on
problems with as many as $10^{6}$ degrees of freedom validate our
algorithm and support the theory of nearly-optimal scaling.
\end{abstract}

\section{Introduction \label{sec:Introduction}}

Density functional theory (DFT) \cite{KohnSham} is the most widely
used computational tool in electronic structure theory, with far-reaching
applications across quantum chemistry and materials science. The central
idea of DFT is to solve the many-electron problem by way of an effective
single-electron problem, whose Hamiltonian is determined self-consistently
in terms of the electron density.

This work is a mathematical study of a framework for solving DFT known
as stochastic DFT \cite{PhysRevLett.111.106402,cytterStochasticDensityFunctional2018,fabianStochasticDensityFunctional2019},
within which we consider certain modifications and extensions. First
we review the motivation for this approach, through the lens of a
suitable notion of \emph{optimal scaling} on which we shall elaborate.
This notion will motivate and guide our algorithms and analysis.

Conventional approaches to density functional theory at zero temperature
rely on finding self-consistent solutions to the Kohn-Sham equations
\[
\begin{aligned} & H_{\mathrm{eff}}[\rho]\,\psi_{j}=\ve_{j}\psi_{j},\quad j=1,\ldots,N,\\
 & \rho(x)=\sum_{j=1}^{N}\vert\psi_{j}(x)\vert^{2}.
\end{aligned}
\]
 Here the Kohn-Sham orbitals $\psi_{j}$, $j=1,\ldots,N$, are the
lowest $N$ orthonormal eigenfunctions of the effective single-particle
Hamiltonian 
\[
H_{\mathrm{eff}}[\rho]:=-\frac{1}{2}\Delta+v_{\mathrm{ext}}+v_{\mathrm{hxc}}[\rho],
\]
 which consists of terms corresponding to the kinetic energy, the
fixed external potential, and the Hartree-exchange-correlation potential,
respectively. The last of these must be determined self-consistently
via the electron density $\rho$. Moreover, note that $N$ indicates
the number of electrons, which is fixed \emph{a priori} in such a
zero-temperature setting.

The Kohn-Sham equations must be solved within some kind of framework
of discretization. In spite of the diversity of approaches (based,
for example, on finite differences, finite elements, or specialized
and highly successful quantum chemistry basis sets \cite{szabo1996modern}),
roughly speaking one must always incur a cost of at least $O(N^{2}n)$
where $n$ is the number of grid points or basis functions, simply
due to the cost of orthogonalizing the Kohn-Sham orbitals.

As such there is an interest in orbital-free methods that avoid any
explicit dependence on $N$ by working with the density matrix 
\[
P(x,y)=\sum_{j=1}^{N}\psi_{j}(x)\psi_{j}(y),
\]
 from which the electron density can be recovered as the diagonal.
These approaches are also more naturally formulated at finite temperature,
which enhances their interest from the point of view, e.g., of warm
dense matter (WDM) \cite{whiteFastUniversalKohnSham2020}. The zero-temperature
limit can be recovered suitably.

The key idea of such methods is that we can recast the Kohn-Sham equations
in the form 
\begin{align*}
P & =f_{\beta}\left(H_{\mathrm{eff}}[\rho]-\mu\mathbf{I}\right)\\
\rho & =\mathrm{diag}\left[P\right],
\end{align*}
 where 
\[
f_{\beta}(x)=\frac{1}{1+e^{\beta x}}
\]
 is the Fermi-Dirac occupation function at inverse temperature $\beta\in(0,\infty)$,
applied in the sense of the continuous operator calculus, and $\mu\in\R$
is a chemical potential.

Recently, PEXSI (pole expansion and selected inversion) \cite{lin2009pole,CMS2009,JCPM2013}
has gained popularity as one such orbital-free approach, which, unlike
stochastic DFT, is completely deterministic. (We will review the longer
history of stochastic orbital-free methods shortly.) The main idea
of PEXSI is that the Fermi-Dirac function can be approximated accurately
as a sum of only a modest number of poles, and in turn the electron
density can be recovered as the sum of diagonals of inverses of sparse
matrices. The selected inversion component of PEXSI achieves this
latter step with a direct algorithm while avoiding the formation of
the entire inverse matrix. However, the cost is controlled by a suitable
fill-in pattern and scales as $O(n^{2})$ for grid-based discretizations
of general 3D systems. Still, the cost scaling in practice is quite
compelling in many practical scenarios, especially for quasi-1D and
quasi-2D molecules.

The hope of stochastic DFT is to achieve a method with truly $\tilde{O}(n)$
cost, independent of the electron number, where the tilde indicates
the omission of logarithmic factors. This defines a target notion
of \emph{optimal scaling}, as the scaling of simply storing the
electron density on a grid of $n$ points is $O(n)$. With a view
toward achieving this scaling, stochastic DFT constructs an estimator
for the electron density on a grid using a stochastic trace estimator.
It remains to elucidate additional factors encoding dependence on
the target error $\ve$ and the inverse temperature $\beta$, which
we will explain below.

The roots of stochastic DFT go back at least to 1990s \cite{PhysRevLett.70.3631},
and more recent efforts \cite{PhysRevLett.111.106402,cytterStochasticDensityFunctional2018,fabianStochasticDensityFunctional2019,whiteFastUniversalKohnSham2020,liuPlaneWaveBasedStochasticDeterministicDensity2022}
have advanced stochastic DFT as a practical algorithm for \emph{ab
initio} electronic structure problems. However, to our knowledge,
stochastic DFT has not received any attention from the point of view
of mathematical analysis.

In our view, there are several key components deserving of mathematical
attention. 
\begin{enumerate}
\item The \textbf{stochastic trace estimator} used for estimating the effective
potential. Although several different variations have been tried in
the literature (contrast, e.g., \cite{cytterStochasticDensityFunctional2018}
and \cite{fabianStochasticDensityFunctional2019}), we demonstrate
that one choice in particular admits a nearly-dimension-independent
sample complexity, both for grid-based and arbitrary Galerkin discretizations.
We comment that several more advanced approaches have been introduced
and applied to reduce the variance of the stochastic estimator. These
include the embedded-fragments theory \cite{fabianStochasticDensityFunctional2019}
which involves a decomposition of the computational domain into local
fragments, as well as approaches that mix deterministic and stochastic
Kohn-Sham orbitals \cite{whiteFastUniversalKohnSham2020,liuPlaneWaveBasedStochasticDeterministicDensity2022}.
While these approaches fall outside the scope dictated by our pursuit
of optimal scaling, they can offer significant practical speedups
and offer interesting avenues for further theoretical investigation.
\item The \textbf{optimization} algorithm for solving the self-consistency
criterion. Typically, stochastic DFT is solved via an approach resembling
the traditional self-consistent field (SCF) iteration, where a stochastic
estimator for the effective potential is dropped in as a replacement
for the exact effective potential, and special attention is paid to
the chemical potential adjustment \cite{cytterStochasticDensityFunctional2018}.
We adopt a novel point of view on SCF through the lens of \emph{mirror
descent}, which we shall review below. In short, this perspective
allows us to prove a nearly-dimension-independent bound on the number
of iterations required to achieve fixed accuracy. We remark that this
perspective may also shed light on deterministic SCF, where to our
knowledge no analogous dimension-independent results on the convergence
rate are known.
\item The \textbf{matrix function approximation} used within the trace estimator.
We shall see that the aforementioned `good' stochastic trace estimator
requires a fast algorithm for matrix-vector products by $f_{\beta}^{1/2}(H_{\mathrm{eff}})$,
where $f_{\beta}^{1/2}$ denotes the square root of the Fermi-Dirac
function. Stochastic DFT typically approaches this task via Chebyshev
expansion of $f_{\beta}^{1/2}$ over the spectral range of $H_{\mathrm{eff}}$
\cite{cytterStochasticDensityFunctional2018}. However, as the basis
set is refined (per unit volume), i.e., in the \emph{complete basis
set limit}, the number of terms in the expansion grows with $n$
due to the fact that the Laplacian term in the effective Hamiltonian
is unbounded. As such, we introduce an alternative approach based
on \emph{pole expansion}, inspired by PEXSI. We see that the same
contour integration technique as used in PEXSI can be applied to the
\emph{square-root} Fermi-Dirac function, after making a suitable
choice of branch cut.
\end{enumerate}
Notably, our theory only directly concerns the \emph{Hartree approximation},
in which $v_{\mathrm{hxc}}=v_{\mathrm{h}}$ and the exchange and correlation
components of $v_{\mathrm{hxc}}$ are neglected. From a chemical point
of view, this is not considered an effective approximation in electronic
structure theory. However, from an algorithmic point of view, we believe
that our analysis is significant, because the Hartree contribution
is typically dominant in absolute terms, and the algorithm itself
generalizes easily. We offer a more detailed discussion of implications
for more general DFT in Appendix \ref{app:dft}.

Concretely, the Hartree energy enables rigorous analysis because it
is:
\begin{enumerate}
\item \textbf{Convex}. More general exchange-correlation functionals are
not convex and can even admit many local optima, or self-consistent
solutions of the Kohn-Sham equations, complicating the notion of `the'
physically correct solution.
\item \textbf{Quadratic}. The fact that the Hartree energy is quadratic
will imply that our estimator for the Hartree potential is unbiased.
Possibly, the impact of bias due to exchange-correlation contributions
may be less important due to their smaller magnitude, but we leave
further analysis of this possibility for future work.
\end{enumerate}
In our analysis of the Hartree theory, we justify the somewhat surprising
stylized conclusion that \emph{solving the self-consistent Hartree
theory from scratch is about as easy as estimating the Hartree energy
for fixed $\rho$}. Under the hood, the idea supporting this conclusion
is that the convergence rate of stochastic mirror descent (in which
a single-shot estimator is used in each optimization step) balances
perfectly with the slow Monte Carlo error rate for the stochastic
estimator. To establish these claims, we need an analysis of stochastic
mirror descent that explains the self-averaging in the effective potential
that takes place over many iterations. Later in the introduction,
we will review the broader context of mirror descent.

Before offering a more refined summary of our scaling results in terms
of $\beta$ and $\ve$, we must distinguish two qualitatively different
limits in which the basis set size $n$ tends to infinity: 
\begin{enumerate}
\item The\textbf{ thermodynamic limit}. This limit is inspired by a scenario
in which a basis set of uniform resolution is used to discretize an
expanding volume, which in particular covers the case where a quantum
chemistry basis set \cite{szabo1996modern} of fixed accuracy is applied
to an enlarging molecule. In particular, we view the discretization
of the non-interacting Hamiltonian (including the Laplacian term)
as bounded in this limit. Moreover, it is reasonable to assume (via
Lemma \ref{lem:Gbound}) that a sort of gradient bound holds in this
limit, as we argue via physical considerations in Section \ref{sec:Structure-of-the-energy}.
Finally, the strong convexity parameter for the fermionic entropy
(Lemma \ref{lem:stronglyconvex}) can be viewed as proportional to
the volume in this limit, which justifies the notion of relative energy
error appearing in the main convergence theorem (Theorem \ref{thm:convergence}). 
\item The \textbf{complete basis set limit}. In this limit, we consider
a basis set of increasingly fine resolution over a fixed volume. In
this limit, as we have mentioned above, the pole expansion approach
to discretizing the square-root Fermi-Dirac function becomes necessary
to maintain optimal scaling. But, in addition, the analysis bounding
the number of optimization iterations must also be adapted to this
case. To avoid confining ourselves to an unnecessarily concrete choice
of basis, we make certain assumptions on the eigenvalue growth of
the non-interacting Hamiltonian, which are intuitive due to the Laplacian
term, and prove that this growth essentially confines the optimizer
to a sector of constant dimension in which suitable strong convexity
holds. Another subtle point is that an alternative initialization
must be chosen for the mirror descent algorithm to avoid dependence
on the spectral norm of the Laplacian. The main convergence results
are given in Theorem \ref{thm:convergence-1} and Corollary \ref{cor:convergence}.
(Note that a suitable notion of relative energy error must be defined.)
\end{enumerate}
Throughout, we attempt to keep the analysis as general as possible
in terms of the basis set, preferring to identify physically reasonable
assumptions, rather than fixate on establishing these assumptions
for any particular choice of basis set.

Now we turn to a more refined summary of the scaling in terms of the
error tolerance $\ve$ and the inverse temperature $\beta$. The additional
factor due to error dependence is always $\tilde{O}(\ve^{-2})$, but
the factor due to temperature dependence is different in the two limits.
Note also that our notion of energy error is a relative notion that
is explained in further detail in the two main theorems (Theorems
\ref{thm:convergence} and \ref{thm:convergence-1}). Then in the
thermodynamic limit, there is no additional dependence of the number
of iterations on $\beta$. However, in the complete basis set limit,
there is an additional factor of $\tilde{O}(\sqrt{\beta})$. Thus
in the two limits, the overall scalings of the number of iterations
are $\tilde{O}(\ve^{-2})$ and $\tilde{O}(\sqrt{\beta}\,\ve^{-2})$,
respectively.

Next we turn to a discussion of the cost scaling of each iteration.
For a general basis set of size $n$, we must also introduce an auxiliary
grid of size $m$ to spatially resolve the electron density $\rho$.
We relate the grid to the basis set via a suitable factorization of
the two-electron integrals known as tensor hypercontraction \cite{10.1063/1.4732310,10.1063/1.4768233,10.1063/1.4768241}
and its interpretation as an interpolative separable density fitting
\cite{LU2015329}, cf. Section \ref{subsec:Electron-repulsion-integrals}.
For the purpose of this summary, we have in mind the special case
of the periodic sinc basis set used in our numerical experiments,
for which $m=n$, and for which there is a direct correspondence between
basis functions and grid points.

Now the dominant cost of each stochastic mirror descent iteration
is the cost of a single matrix-vector multiplication by $f_{\beta}^{1/2}(H_{\mathrm{eff}})$.
For the purposes of our convergence results, we neglect the error
$\delta$ of this approximation for simplicity. However, note that
in the thermodynamic limit in which the effective Hamiltonian remains
bounded, standard approximation theory results \cite{trefethen2019approximation}
guarantee that Chebyshev approximation suffices to achieve $\tilde{O}(\sqrt{\beta}\,\log(1/\delta)\,n)$
complexity for this task. In the complete basis set limit, heuristically
one expects that a similar scaling is achieved via the pole expansion
technique (as we confirm numerically in our experiments), but a rigorous
analysis is stymied by the difficulty of analyzing preconditioned
linear solvers for indefinite systems. Nevertheless, we comment that
by reducing to the positive definite case (at the cost of squaring
the condition number), we could use the analysis of the preconditioned
conjugate gradient algorithm to achieve $\tilde{O}(\beta\,\log(1/\delta)\,n)$
complexity in the thermodynamic limit, which is still enough to achieve
the optimal scaling in terms of $n$.

For most of our results, we assume a fixed chemical potential for
simplicity, though in Section \ref{sec:Chemical-potential-optimization}
we explain how the algorithm with fixed chemical potential can be
wrapped within another algorithmic layer to determine the chemical
potential, at the cost of an additional factor of $O(\ve^{-1})$.
It may be interesting to consider alternative primal-dual approaches
for optimizing the Hartree potential and the chemical potential at
the same time, though we expect that such an approach may introduce
additional dependence on $\beta$ in the analysis, and we leave such
considerations to future work.

Finally, we validate our theoretical results through detailed numerical
experiments in Section \ref{sec:Experiments}, which demonstrate the
scalability of our pole-expansion-based algorithm in both limits to
discretizations using over a million points.

Before offering an outline of the paper, we close the introduction
with a review of mirror descent and how our work fits into this context.
Mirror descent (MD) is a versatile first-order optimization method
originally proposed by \cite{Nemirovski1983problem} for convex problems.
It has since become a cornerstone in both online learning and stochastic
optimization due to its ability to adapt to high-dimensional settings
and different norm geometries. The algorithm can be viewed as being
induced by the choice of a convex `Bregman potential,' which induces
a mirror map to a dual domain as its gradient. Many practical applications
of mirror descent take place on the probability simplex and, to a
lesser extent, its quantum generalization (the set of density operators).
To our knowledge, our work is the first exploration of a practical
scenario in which mirror descent is applied to the domain of \emph{fermionic
density matrices}. The suitable choice of Bregman potential in our
context is the Fermi-Dirac operator entropy, and interestingly we
establish a connection between mirror descent in this framework and
the SCF that is commonly applied to solve DFT.

More specifically, we must consider a suitable notion of stochastic
mirror descent, due to the use of a stochastic estimator for the gradient,
and exploit self-averaging over the optimization trajectory, as mentioned
above. This task requires us to establish a suitable sub-exponential
concentration bound for our gradient estimator, which can be lifted
to a suitable concentration bound for a martingale difference sequence
appearing in the error analysis of mirror descent. Although we offer
our own complete proof to meet the specifics of our needs, we note
that the lifting step finds several analogies in the literature on
stochastic mirror descent. Indeed, \cite{nemirovskiRobustStochasticApproximation2009,lanOptimalMethodStochastic2012}
establishes high-probability bounds for stochastic mirror descent
(SMD) and accelerated stochastic mirror descent with i.i.d unbiased
gradient estimators and sub-Gaussian noise; \cite{liuHighProbabilityConvergence2023}
considers a more general framework of SMD with unbounded domains;
\cite{vuralMirrorDescentStrikes2022} establishes a convergence rate
for SMD with infinite noise variance; \cite{li2022high} considers
stochastic gradient descent with heavy-tailed noise using the notion
of sub-Weibull distributions; and \cite{eldowaGeneralTailBounds2024}
further uses this class of distributions to establish convergence
rates for SMD with heavy-tailed martingale noise.

\subsection{Outline}

In Section \ref{sec:Preliminaries}, we present the formulation of
the Hartree theory as a convex optimization problem over fermionic
density matrices, as well as the physical interpretation of the terms
appearing in the objective. We also explain the assumptions on these
terms needed to yield a suitable notion of gradient bound, to be justified
later in Section \ref{sec:Structure-of-the-energy}. In Section \ref{sec:Mirror-descent-framework},
we present the mirror descent framework for solving this optimization
problem, including some analysis of the strong convexity of the Bregman
potential. In Section \ref{sec:Gradient-estimator}, we complete the
description of the algorithm by presenting the gradient estimator
for an arbitrary Galerkin basis and proving a suitable concentration
bound. We also explain the matrix function approximation details required
to implement the gradient estimator practically. In Section \ref{sec:Convergence},
we present a convergence theorem for our algorithm which establishes
optimal scaling in the thermodynamic limit. In Section \ref{sec:Structure-of-the-energy},
we return to discuss the structure of the energy term and the physical
considerations justifying our abstract assumptions. This completes
the `main narrative' for the optimal scaling of our approach in the
thermodynamic limit.

In the remaining sections, we consider several ornamentations. To
wit, in Section \ref{sec:Chemical-potential-optimization} we explain
how the algorithm can be used as a subroutine within an algorithm
for determining the chemical potential if the electron number (not
the chemical potential) is fixed \emph{a priori}. In Section \ref{sec:cbl},
we consider the complete basis set limit and explain how the dependence
on the spectral norm of the non-interacting Hamiltonian can be removed
by making assumptions on its eigenvalue growth and taking a suitable
initialization.

In Section \ref{sec:Experiments}, we present numerical experiments
validating our theoretical results and, moreover, demonstrating the
almost-linear scaling of the algorithm (including the novel pole expansion
approach) in both the thermodynamic and complete basis set limits
for model problems. For concreteness, we comment that each of our
largest experiments, which considered grids / basis sets of size $n\approx10^{6}$,
consumed only about 8 hours on a single GPU.

Finally, we outline the content of several supporting appendices which
are referenced as needed in the main text. In Appendix \ref{app:elem},
we prove several facts about the Fermi-Dirac entropy. In Appendix
\ref{app:subexp}, we prove the sub-exponential concentration bound
on the trace estimator. In Appendix \ref{app:contour} we give background,
details, and theory supporting the construction of the pole expansion
for the square-root Fermi-Dirac function via contour integration.
In Appendix \ref{app:dft}, we review considerations for more general
DFT beyond the Hartree theory and comment on the relevance of our
results in this broader setting. In Appendix \ref{app:chemical} we
give proofs supporting the main results of Section \ref{sec:Chemical-potential-optimization}
on chemical potential optimization. Finally, in Appendix \ref{app:cbl}
we give proofs supporting the main results of Section \ref{sec:cbl}
on the convergence theory in the complete basis set limit.

\subsection{Acknowledgments}

This material is based on work supported by the Applied Mathematics
Program of the US Department of Energy (DOE) Office of Advanced Scientific
Computing Research under contract number DE-AC02-05CH11231 and by
the U.S. Department of Energy, Office of Science, Accelerated Research
in Quantum Computing Centers, Quantum Utility through Advanced Computational
Quantum Algorithms, grant no. DE-SC0025572. M.L. was partially supported
by a Sloan Research Fellowship. The authors gratefully acknowledge
conversations with Xiao Liu and Ben Shpiro on stochastic DFT.

\section{Preliminaries \label{sec:Preliminaries}}

We are interested in an optimization problem of the form 
\begin{align}
\underset{X\in\R^{n\times n}}{\text{minimize}}\quad\quad & F_{\beta}(X)-\mu\,\Tr[X]\label{eq:opt}\\
\text{subject to}\quad\quad & 0\preceq X\preceq\mathbf{I}_{n}.\nonumber 
\end{align}
 We can view this problem as unconstrained because $F_{\beta}$ will
act as a barrier for the domain 
\[
\mathcal{X}:=\{X\,:\,0\preceq X\preceq\mathbf{I}_{n}\}.
\]
 Here $\beta\in(0,+\infty]$ defines the \textbf{\emph{inverse temperature}},
which can be taken to be $+\infty$ by suitably interpreting expressions
below, unless indicated otherwise. Meanwhile, $\mu$ is called the
\textbf{\emph{chemical potential}} and can be viewed as a Lagrange
multiplier for the trace constraint in the following alternative problem:
\begin{align}
\underset{X\in\R^{n\times n}}{\text{minimize}}\quad\quad & F_{\beta}(X)\label{eq:opt_constraint}\\
\text{subject to}\quad\quad & \Tr[X]=N,\nonumber \\
 & 0\preceq X\preceq\mathbf{I}_{n}.\nonumber 
\end{align}
 We will discuss later how to solve this constrained problem, given
an oracle for solving the unconstrained problem.

For context, we remark that in both problems, $X=(X_{ij})_{i,j=1}^{n}$
denotes the matrix of coefficients of the \textbf{\emph{density matrix}}
\[
P_{X}(x,y):=\sum_{i,j=1}^{n}X_{ij}\,\psi_{i}(x)\psi_{j}(y)
\]
 in a fixed orthonormal \textbf{\emph{quantum chemistry basis}} $\{\psi_{i}\}_{i=1}^{n}$
of functions on $\R^{d}$. (For concreteness we can take $d=3$, though
the choice does not affect most of the ensuing considerations directly.)

In terms of $X$ we may also define the \textbf{\emph{electron density
}}
\[
\rho_{X}(x)=P_{X}(x,x).
\]
 Observe that by orthonormality 
\[
\int_{\R^{d}}\rho_{X}(x)\,dx=\Tr[X].
\]
 Since the integral of the electron density can be interpreted as
the total electron number of the ensemble, the value $N\in[0,n]$
denotes the \textbf{\emph{electron number}}, which need not be an
integer.

We will view $\{\psi_{i}\}$ as a quantum chemistry basis for an extended
quantum system. Intuitively, we can imagine that the number of basis
functions per atom as being bounded by a constant, and we are considering
a system of increasing volume / number of atoms in the limit $n\ra\infty$.
For now, we will maintain an abstract perspective that avoids discussion
of the physical modeling considerations underlying our optimization
problems, but we will return to these considerations below in Section
\ref{sec:Structure-of-the-energy}. We will further comment on general
DFT, beyond the Hartree approximation, in Appendix \ref{app:dft}.

The objective $F_{\beta}$ is interpreted as a \textbf{\emph{free
energy }}defined by 

\[
F_{\beta}(X):=E(X)+\frac{1}{\beta}S_{\mathrm{FD}}(X),
\]
 which, informally speaking, is bounded within a constant factor of
the free energy density per unit volume.

Within the definition for the free energy, $E(X)$ denotes the \textbf{\emph{energy}}
function and $S_{\mathrm{FD}}(X)$ denotes the \textbf{\emph{Fermi-Dirac
entropy}}: 
\[
S_{\mathrm{FD}}(X)=\Tr[X\log X]+\Tr[(\mathbf{I}_{n}-X)\log(\mathbf{I}_{n}-X)].
\]
 We can view the domain of the convex function $S_{\mathrm{FD}}$
as 
\[
\mathcal{X}:=\{X\in\R^{n\times n}\,:\,0\preceq X\preceq\mathbf{I}_{n}\},
\]
 and the Fermi-Dirac entropy acts as a soft barrier for this domain.

Finally, the energy function $E(X)$ will take the form 
\[
E(X)=\Tr[CX]+\tilde{E}(X),
\]
 where $C\in\R^{n\times n}$ is symmetric (and the notation is chosen
by analogy to the usual notation for the linear cost term in semidefinite
programming) and where 
\[
\tilde{E}(X)=\frac{1}{2}\sum_{ijkl}X_{ij}\,v_{ij,kl}\,X_{kl}
\]
 is the \textbf{\emph{Hartree energy}}. The tensor $v_{ij,kl}$ will
consist of the electron repulsion integrals (ERI).

We assume that 
\begin{equation}
v_{ij,kl}=\sum_{p,q=1}^{m}\Psi_{pi}\Psi_{pj}\,V_{pq}\,\Psi_{qk}\Psi_{ql},\label{eq:ERI0}
\end{equation}
 where $\Psi=(\Psi_{pi})\in\R^{m\times n}$ has orthonormal columns
(i.e., satisfies $\Psi^{\top}\Psi=\mathbf{I}_{n}$) and $V=(V_{pq})\in\R^{m\times m}$
is \emph{positive semidefinite}. In particular it follows that $E(X)$
is \emph{convex}.

In Section \ref{sec:Structure-of-the-energy}, we will explain the
physical background and modeling conditions that support this structure
for the energy function.

The key quantities appearing in our error analysis will be the spectral
norm $\Vert C\Vert$ and the induced operator norm $\Vert V\Vert_{\infty}=\max_{i}\sum_{j}\vert V_{ij}\vert$,
as well as the quantity 
\begin{equation}
c_{\Psi}:=\max_{p=1,\ldots,m}\sum_{i=1}^{n}\vert\Psi_{pi}\vert^{2}.\label{eq:c}
\end{equation}
 We will also explain how these quantities are controlled in terms
of the underlying physical problem in Section \ref{subsec:Electron-repulsion-integrals}.
In fact $c_{\Psi}$ and $\Vert V\Vert_{\infty}$ appear together via
their product, which we give its own notation: 
\begin{equation}
c_{\mathrm{h}}:=c_{\Psi}\Vert V\Vert_{\infty},\label{eq:ch}
\end{equation}
 in which `h' is for Hartree. To tackle the complete basis set limit,
we will have to alter our perspective somewhat, and the relevant discussion
is deferred to Section \ref{sec:cbl}.

For now, it is useful to observe that the gradient of the quadratic
term can be written 
\[
[\nabla\tilde{E}(X)]_{ij}=\sum_{kl}v_{ij,kl}\,X_{kl}.
\]
 Moreover, it is useful to define $\rho(X)=\left[\rho_{q}(X)\right]_{q=1}^{m}\in\R^{m}$
by 
\begin{equation}
\rho_{q}(X)=\sum_{k,l=1}^{n}\Psi_{qk}X_{kl}\Psi_{ql},\label{eq:rhoq}
\end{equation}
 or alternatively as 
\begin{equation}
\rho(X)=\mathrm{diag}\left[\Psi X\Psi^{\top}\right].\label{eq:rhodiag}
\end{equation}
 As we shall elucidate in Section \ref{subsec:Electron-repulsion-integrals},
the vector $\rho_{q}(X)$ can be interpreted as the electron density
$\rho_{X}(x_{q})$ evaluated at a collection of interpolation points
$\{x_{q}\}_{q=1}^{m}$.

In terms of $\rho(X)$, we can alternatively write $\tilde{E}(X)$
as a quadratic form: 
\[
\tilde{E}(X)=\frac{1}{2}\rho(X)^{\top}V\rho(X),
\]
 and we can also express $\nabla\tilde{E}(X)$ in terms of $\rho(X)$
as 
\begin{equation}
\nabla\tilde{E}(X)=\Psi^{\top}\mathrm{diag}^{*}\left[V\rho(X)\right]\Psi.\label{eq:Etilderho}
\end{equation}
 Here `$\mathrm{diag}^{*}$' is the operator that returns a diagonal
matrix with specified diagonal, which is the formal adjoint of the
operator that returns the diagonal of a square matrix. We can identify
$\nabla\tilde{E}(X)$ physically as the \textbf{\emph{Hartree potential}}.

The importance of $c_{\mathrm{h}}$ owes to the following elementary
lemma, which bounds the spectral norm of the gradient $\nabla\tilde{E}(X)$
(hence the notation `G' for gradient). Together with a bound on $\Vert C\Vert$,
this offers a bound on the spectral norm of $\nabla E(X)=C+\nabla\tilde{E}(X)$.
\begin{lem}
\label{lem:Gbound} For any $Y\in\R^{n\times n}$, $\Vert\nabla\tilde{E}[Y]\Vert\leq c_{\mathrm{h}}\Vert Y\Vert$,
and $\Vert\rho(Y)\Vert_{\infty}\leq c_{\Psi}\Vert Y\Vert$. 
\end{lem}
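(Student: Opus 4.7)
The plan is to prove the density bound first and then bootstrap it to obtain the operator-norm bound on the gradient, since the formula \eqref{eq:Etilderho} expresses $\nabla\tilde E(Y)$ directly in terms of $\rho(Y)$.

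For the second inequality, I would use the representation \eqref{eq:rhodiag} to write each entry as a quadratic form
\[
\rho_q(Y) \;=\; e_q^\top\,\Psi Y\Psi^\top\, e_q \;=\; a_q^\top Y\, a_q,
\]
where $a_q := \Psi^\top e_q \in \R^n$ is the vector formed from the $q$-th row of $\Psi$. Cauchy--Schwarz then gives $|a_q^\top Y a_q|\le \Vert a_q\Vert\,\Vert Ya_q\Vert\le \Vert a_q\Vert^2\,\Vert Y\Vert$, and since $\Vert a_q\Vert^2=\sum_i |\Psi_{qi}|^2\le c_\Psi$ by definition \eqref{eq:c}, taking the maximum over $q$ yields $\Vert\rho(Y)\Vert_\infty\le c_\Psi\Vert Y\Vert$.

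For the first inequality, I would start from \eqref{eq:Etilderho} and apply sub-multiplicativity of the operator norm:
\[
\Vert\nabla\tilde E(Y)\Vert \;\le\; \Vert\Psi\Vert^2\,\bigl\Vert\mathrm{diag}^*[V\rho(Y)]\bigr\Vert.
\]
The assumption $\Psi^\top\Psi = \mathbf{I}_n$ (orthonormal columns) gives $\Vert\Psi\Vert=1$, and the operator norm of a diagonal matrix is just the $\ell^\infty$ norm of its diagonal, so $\bigl\Vert\mathrm{diag}^*[V\rho(Y)]\bigr\Vert = \Vert V\rho(Y)\Vert_\infty$. Since $\Vert V\Vert_\infty=\max_i\sum_j|V_{ij}|$ is precisely the induced $\ell^\infty\to\ell^\infty$ operator norm of $V$, I can chain $\Vert V\rho(Y)\Vert_\infty\le\Vert V\Vert_\infty\Vert\rho(Y)\Vert_\infty$ with the density bound just proved to obtain $\Vert\nabla\tilde E(Y)\Vert\le \Vert V\Vert_\infty\,c_\Psi\,\Vert Y\Vert = c_\mathrm{h}\Vert Y\Vert$, using the definition \eqref{eq:ch}.

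There is no real obstacle here; the proof is essentially bookkeeping about matrix norms. The only mild subtleties are to remember that $\Psi\Psi^\top$ is a projection rather than the identity (so the bound $\Vert\Psi\Vert=\Vert\Psi^\top\Vert=1$ must be used carefully and cannot be strengthened), and that the constant $\Vert V\Vert_\infty$ defined in the text is exactly the induced $\ell^\infty$ operator norm rather than the entrywise $\ell^\infty$ norm, which is what makes the final chaining work. Neither point requires more than a line to verify.
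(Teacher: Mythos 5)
Your proof is correct and follows essentially the same route as the paper: first bound each entry $\rho_q(Y) = a_q^\top Y a_q$ by Cauchy--Schwarz together with the definition of $c_\Psi$, then pass through the isometry $\Psi$ and the induced $\ell^\infty$ operator norm of $V$ to bound $\nabla\tilde E(Y)$. The only cosmetic difference is that you spell out more explicitly why $\Vert V\Vert_\infty$ is the right induced norm and why $\Vert\Psi\Vert=1$, whereas the paper compresses these into the phrase ``since $\Psi$ is an isometry.''
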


\begin{proof}
Note that by (\ref{eq:Etilderho}), since $\Psi$ is an isometry it
suffices to show that $\Vert V\rho(Y)\Vert_{\infty}\leq c_{\Psi}\Vert V\Vert_{\infty}\Vert Y\Vert$.
In turn it suffices to show that $\Vert\rho(Y)\Vert_{\infty}\leq c_{\Psi}\Vert Y\Vert$.

To see this, note that by (\ref{eq:rhoq}), we have 
\[
\rho_{q}(Y)=\left\langle \Psi_{q,\,:\,},Y\Psi_{q,\,:\,}\right\rangle \leq\Vert\Psi_{q,\,:\,}\Vert\,\Vert Y\Psi_{q,\,:\,}\Vert\leq c_{\Psi}\Vert Y\Vert,
\]
 where we have used the definition (\ref{eq:c}) of $c_{\Psi}$. This
completes the proof.
\end{proof}

\section{Mirror descent framework \label{sec:Mirror-descent-framework}}

Since (\ref{eq:opt}) is a continuous optimization problem on a compact
domain, we know that there exists a minimizer $X_{\star}$. (Moreover
the minimizer is unique when $\beta<+\infty$ by strict convexity.)

We will solve the unconstrained problem (\ref{eq:opt}) using mirror
descent, induced by the choice of $S_{\mathrm{FD}}$ as our \textbf{\emph{Bregman
potential}} on $\mathcal{X}$. See \cite{beck2003mirror} for a complete
background on mirror descent. Here we highlight the structure of a
few of the key objects that arises from this choice of Bregman potential.

First, the \textbf{\emph{mirror map}} $\nabla S_{\mathrm{FD}}$ defined
by our choice is given by 
\[
\nabla S_{\mathrm{FD}}(X)=\log\left(X(\mathbf{I}_{n}-X)^{-1}\right),
\]
 and the convex conjugate $S_{\mathrm{FD}}^{*}$ of the Bregman potential
is defined by 
\[
S_{\mathrm{FD}}^{*}(X^{*})=\Tr\left[\log(\mathbf{I}_{n}+e^{X^{*}})\right].
\]
 The domain $\mathcal{X}^{*}$ of $S_{\mathrm{FD}}^{*}$ is the set
of all symmetric $n\times n$ matrices. The gradient of the convex
conjugate $S_{\mathrm{FD}}^{*}$ defines the \textbf{\emph{inverse
mirror map}}:
\[
\nabla S_{\mathrm{FD}}^{*}(X^{*})=(\mathbf{I}_{n}+e^{-X^{*}})^{-1}.
\]
The Bregman potential also fixes a \textbf{\emph{Bregman divergence}}: 

\[
D(Y\Vert X):=S_{\mathrm{FD}}(Y)-S_{\mathrm{FD}}(X)-\left\langle \nabla S_{\mathrm{FD}}(X),Y-X\right\rangle 
\]
 for $X,Y\in\mathcal{X}$. (Here and throughout the manuscript, $\left\langle \,\cdot\,,\,\cdot\,\right\rangle $
refers to the Frobenius inner product.)

\subsection{Algorithm overview \label{subsec:Algorithm-overview}}

The mirror descent iteration ((2.7) in \cite{beck2003mirror}) (with
step size $\gamma_{t}>0$ at iteration $t$) for (\ref{eq:opt}) is
defined by 
\[
\nabla S_{\mathrm{FD}}(X_{t+1})=(1-\beta^{-1}\gamma_{t})\nabla S_{\mathrm{FD}}(X_{t})-\gamma_{t}\left(\nabla E(X_{t})-\mu\mathbf{I}_{n}\right).
\]
 We will offer a more physical interpretation in Section \ref{subsec:Physical-interpretation}
below. For reasons to be clarified in Section \ref{subsec:proximal},
due to an alternative interpretation of this algorithm, we will always
take $\gamma_{t}\leq\beta$.

As we shall see later, computational efficiency demands that we estimate
$\nabla E$ stochastically. For this reason, we are interested more
generally in the approximate update 
\begin{equation}
\nabla S_{\mathrm{FD}}(X_{t+1})=(1-\beta^{-1}\gamma_{t})\nabla S_{\mathrm{FD}}(X_{t})-\gamma_{t}\left(G_{t}-\mu\mathbf{I}_{n}\right),\label{eq:update}
\end{equation}
 where $G_{t}$ is an estimator for the gradient.

As our initial condition we take 
\[
X_{0}=\frac{1}{2}\mathbf{I}_{n},
\]
 and we let $T$ denote the time horizon of the algorithm, which furnishes
iterates $X_{0},\ldots,X_{T}$. (We will consider an alternative initial
condition in Section \ref{sec:cbl}.) 

In Section \ref{sec:Gradient-estimator} we will explain how the gradient
estimator is constructed and how several key properties are guaranteed.

\subsection{Physical interpretation ($\beta<+\infty$) \label{subsec:Physical-interpretation}}

In this section we assume $\beta<+\infty$.

It is useful to define the \textbf{\emph{Fermi-Dirac function }}
\[
f_{\beta}(x)=\frac{1}{1+e^{\beta x}}.
\]
 Then evidently 
\[
\nabla S_{\mathrm{FD}}^{*}(X^{*})=f_{\beta}(-\beta^{-1}X^{*})=f_{\beta}(H),
\]
 where we view $H:=-\beta^{-1}X^{*}$ as an \textbf{\emph{effective
Hamiltonian }}corresponding to our state $X\in\mathcal{X}$, corresponding
to $X^{*}\in\mathcal{X}^{*}$ via the mirror map.

If we define
\[
H_{t}=-\beta^{-1}X_{t}^{*}=-\beta^{-1}\nabla S(X_{t}),\quad t=1,\ldots,T,
\]
 then we can alternatively characterize the mirror descent update
(\ref{eq:update}) as an update rule for the effective Hamiltonian:
\begin{equation}
H_{t+1}=(1-\beta^{-1}\gamma_{t})H_{t}+\beta^{-1}\gamma_{t}\left(G_{t}-\mu\mathbf{I}_{n}\right),\label{eq:hamiltonianupdate}
\end{equation}
 where $H_{0}=0$. In each update we set $H_{t+1}$ to be a convex
combination of the $H_{t}$ with $G_{t}-\mu\mathbf{I}_{n}$. 

Notice that any fixed point of the iteration map in which exact gradients
$G_{t}=\nabla E(X_{t})$ are used (which necessariliy coincides with
the unique optimizer $X_{\star}$) must satisfy 

\begin{equation}
H_{\star}=\nabla E(X_{\star})-\mu\mathbf{I}_{n},\quad X_{\star}=f_{\beta}\left(\nabla E(X_{\star})-\mu\mathbf{I}_{n}\right).\label{eq:fixedpoint}
\end{equation}

\subsection{Formulation as a proximal algorithm \label{subsec:proximal}}

We introduced the update (\ref{eq:update}) as the instantiation of
mirror descent on the objective 
\[
E(X)-\mu\,\Tr[X]+\beta^{-1}S_{\mathrm{FD}}(X)
\]
with Bregman potential $S_{\mathrm{FD}}$ and step size $\gamma_{t}>0$.
However, we can equivalently view the same update as a proximal algorithm:
\begin{equation}
X_{t+1}=\underset{X\in\mathcal{X}}{\text{argmin}}\left\{ E(X_{t})+\left\langle G_{t}-\mu\mathbf{I}_{n},X-X_{t}\right\rangle +\frac{1}{\beta}S_{\mathrm{FD}}(X)+\frac{1}{\eta_{t}}D(X\Vert X_{t})\right\} ,\label{eq:update2}
\end{equation}
 for a certain choice of $\eta_{t}>0$.

Note that mirror descent can \emph{always} be interpreted as a proximal
algorithm (Proposition 3.2 in \cite{beck2003mirror}), but a key difference
here is that the regularization term $\beta^{-1}S_{\mathrm{FD}}(X)$
is not linearized inside the `argmin.' We are treating it exactly
within the argmin in the sense of composite function minimization
that is famously used for nonsmooth terms that enjoy exploitable structure
\cite{beck2009fast}. Therefore this proximal formulation is different
from the one that is automatically enjoyed by the update as an instance
of mirror descent.

Now to establish the connection between (\ref{eq:update}) and (\ref{eq:update2}),
observe that solving the first-order optimality condition for (\ref{eq:update2})
yields 
\[
\nabla S_{\mathrm{FD}}(X_{t+1})=\left(1-\frac{\eta_{t}}{\eta_{t}+\beta}\right)\nabla S(X_{t})-\frac{\eta_{t}\beta}{\eta_{t}+\beta}\left(G_{t}-\mu\mathbf{I}_{n}\right),
\]
 which coincides with (\ref{eq:update}) under the identification
\[
\gamma_{t}:=\frac{\eta_{t}\beta}{\eta_{t}+\beta}<\beta.
\]

\subsection{Elementary facts \label{subsec:Elementary-facts}}

For now we state a key property of the Fermi-Dirac entropy, which
follows from the famous strong convexity of the von Neumann entropy
\cite{10.1063/1.4871575}.
\begin{lem}
\label{lem:stronglyconvex}$S_{\mathrm{FD}}$ is $(2/n)$-strongly
convex on $\mathcal{X}$ with respect to the nuclear norm $\Vert\,\cdot\,\Vert_{*}$.
\end{lem}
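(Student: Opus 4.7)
The plan is to reduce the claim to the known strong convexity of the von Neumann entropy $S_{\mathrm{VN}}(\rho) = \Tr[\rho \log \rho]$ on the set of density matrices, which is precisely the content of the quantum Pinsker inequality cited via \cite{10.1063/1.4871575}: for density matrices $\rho_1,\rho_2$ in dimension $N$, the Bregman divergence induced by $S_{\mathrm{VN}}$ satisfies $D_{S_{\mathrm{VN}}}(\rho_1 \Vert \rho_2) \geq \tfrac{1}{2}\Vert \rho_1 - \rho_2\Vert_*^2$.

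The first step is to set up an embedding that turns $S_{\mathrm{FD}}$ into a von Neumann entropy. For $X\in\mathcal{X}$, define the block-diagonal matrix
\[
L(X) := \begin{pmatrix} X & 0 \\ 0 & \mathbf{I}_n - X \end{pmatrix} \in \R^{2n\times 2n}, \qquad \tilde{Y}(X) := \tfrac{1}{n} L(X).
\]
Since $0 \preceq X \preceq \mathbf{I}_n$, $L(X)$ is positive semidefinite, and $\Tr[L(X)] = n$, so $\tilde{Y}(X)$ is a density matrix. A direct computation using $\Tr[Y\log Y] = \Tr[X\log X] + \Tr[(\mathbf{I}_n - X)\log(\mathbf{I}_n - X)]$ for $Y = L(X)$ yields the identity
\[
S_{\mathrm{FD}}(X) = n\, S_{\mathrm{VN}}\bigl(\tilde{Y}(X)\bigr) + n\log n.
\]

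Because the map $X \mapsto \tilde{Y}(X)$ is affine, the Bregman divergences are related by a chain-rule computation: writing $\nabla S_{\mathrm{FD}}(X) = L^* \nabla S_{\mathrm{VN}}(\tilde{Y}(X))$ and using $L(X_1)-L(X_2) = n(\tilde{Y}(X_1)-\tilde{Y}(X_2))$, one obtains
\[
D_{S_{\mathrm{FD}}}(X_1 \Vert X_2) = n\, D_{S_{\mathrm{VN}}}\bigl(\tilde{Y}(X_1)\,\Vert\,\tilde{Y}(X_2)\bigr).
\]
Applying the quantum Pinsker inequality on the right, together with the nuclear-norm computation
\[
\Vert \tilde{Y}(X_1) - \tilde{Y}(X_2)\Vert_* = \tfrac{1}{n}\Vert L(X_1) - L(X_2)\Vert_* = \tfrac{2}{n}\Vert X_1 - X_2\Vert_*
\]
(the factor $2$ coming from the two blocks $\pm(X_1-X_2)$), gives the desired bound $D_{S_{\mathrm{FD}}}(X_1\Vert X_2) \geq \tfrac{2}{n}\Vert X_1-X_2\Vert_*^2$.

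There is no real obstacle here: the quantum Pinsker inequality does the heavy lifting, and the only thing to be careful about is bookkeeping the factors of $n$ and the factor of $2$ from the block structure when converting nuclear norms between the $n$- and $2n$-dimensional spaces. An alternative, self-contained route would be to compute the Hessian of $S_{\mathrm{FD}}$ via the standard integral representation of the operator logarithm and bound it below in the Schatten-$\infty$-to-$\infty$ sense (the dual of nuclear norm), but the reduction above is cleaner and leverages the cited result directly.
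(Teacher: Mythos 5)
Your proof is correct, and it takes a genuinely different route from the paper. The paper writes
\[
S_{\mathrm{FD}}(X) = n\,S_{\mathrm{VN}}(n^{-1}X) + n\,S_{\mathrm{VN}}(n^{-1}[\mathbf{I}_{n}-X]) + \text{const},
\]
where $S_{\mathrm{VN}}$ is the \emph{unnormalized} von Neumann entropy on the subnormalized domain $\{Y\succeq0,\ \Tr[Y]\leq1\}$, and bounds the two Hessian contributions separately, each by $\frac{1}{n}\Vert Z\Vert_{*}^{2}$, for a total of $\frac{2}{n}\Vert Z\Vert_{*}^{2}$. Your embedding into a single $2n\times2n$ block-diagonal density matrix $\tilde{Y}(X)=\frac{1}{n}\mathrm{diag}(X,\mathbf{I}_{n}-X)$ replaces this two-term decomposition with one application of quantum Pinsker on trace-one states, and the block-diagonal structure makes the two nuclear norms add \emph{before} squaring. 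This is why you obtain $D_{S_{\mathrm{FD}}}(X_{1}\Vert X_{2})\geq\frac{2}{n}\Vert X_{1}-X_{2}\Vert_{*}^{2}$, whereas the paper's argument corresponds to $D_{S_{\mathrm{FD}}}\geq\frac{1}{n}\Vert\cdot\Vert_{*}^{2}$. Note the convention: in the paper, $\alpha$-strong convexity is defined by $\langle Z,\nabla^{2}f(X)[Z]\rangle\geq\alpha\Vert Z\Vert^{2}$, equivalently $D_{f}\geq\frac{\alpha}{2}\Vert\cdot\Vert^{2}$; so your bound $D_{S_{\mathrm{FD}}}\geq\frac{2}{n}\Vert\cdot\Vert_{*}^{2}$ in fact establishes $(4/n)$-strong convexity, a strictly stronger statement than the lemma's $(2/n)$, and the lemma only requires $D_{S_{\mathrm{FD}}}\geq\frac{1}{n}\Vert\cdot\Vert_{*}^{2}$. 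The sharper constant is correct and tight (check $n=1$, $X=\frac{1}{2}\mathbf{I}_{n}$: $S_{\mathrm{FD}}''=4$), but you should be aware you have overshot the claim rather than hit it on the nose.
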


The proof is given in Appendix \ref{app:elem}.

It is also useful to show that the Bregman divergence from the initial
condition $X_{0}=\frac{1}{2}\mathbf{I}_{n}$ is bounded.
\begin{lem}
\label{lem:divbound}For all $X\in\mathcal{X}$, we have 
\[
D(X\Vert X_{0})\leq n\log2.
\]
\end{lem}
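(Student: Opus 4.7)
The plan is to evaluate the Bregman divergence explicitly at the chosen base point $X_0 = \tfrac{1}{2}\mathbf{I}_n$, where the expression simplifies dramatically, and then obtain the stated bound from an elementary scalar estimate on the Fermi-Dirac entropy integrand.

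First I would compute the two pieces of the Bregman divergence at $X_0$. Using the formulas given for $S_{\mathrm{FD}}$ and $\nabla S_{\mathrm{FD}}$ in Section \ref{sec:Mirror-descent-framework}, the symmetry $X_0 = \mathbf{I}_n - X_0$ yields $\nabla S_{\mathrm{FD}}(X_0) = \log\bigl(X_0(\mathbf{I}_n - X_0)^{-1}\bigr) = \log \mathbf{I}_n = 0$, so the linear term in the Bregman divergence vanishes. Meanwhile $S_{\mathrm{FD}}(X_0) = 2\,\Tr\bigl[\tfrac{1}{2}\mathbf{I}_n \log \tfrac{1}{2}\bigr] = -n\log 2$. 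Substituting into the definition of $D(\,\cdot\,\Vert\,\cdot\,)$ then gives
\[
D(X\Vert X_0) = S_{\mathrm{FD}}(X) + n\log 2.
\]

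Thus the lemma reduces to the claim that $S_{\mathrm{FD}}(X) \le 0$ for every $X\in\mathcal{X}$. To see this, diagonalize $X$ with eigenvalues $\lambda_1,\ldots,\lambda_n \in [0,1]$ (which are in $[0,1]$ because $0\preceq X\preceq \mathbf{I}_n$). Then $\mathbf{I}_n - X$ is simultaneously diagonalizable with eigenvalues $1-\lambda_i$, and hence
\[
S_{\mathrm{FD}}(X) = \sum_{i=1}^{n}\Bigl[\lambda_i \log \lambda_i + (1-\lambda_i)\log(1-\lambda_i)\Bigr],
\]
with the convention $0\log 0 = 0$. Each summand is the negative binary entropy of a Bernoulli distribution with parameter $\lambda_i$, which is nonpositive on $[0,1]$ (and equals zero precisely at the endpoints). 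Consequently $S_{\mathrm{FD}}(X)\le 0$, which combined with the identity above gives $D(X\Vert X_0)\le n\log 2$.

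There is no real obstacle here: the only subtlety is being careful that the operator logarithm is defined via the functional calculus on eigenvalues in $(0,1)$ (with the boundary values $0\log 0 := 0$ handled by continuity), and that once one commutes to the eigenbasis the proof reduces to a one-variable inequality.
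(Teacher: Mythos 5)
Your proof is correct and follows essentially the same route as the paper's: compute $S_{\mathrm{FD}}(X_0)=-n\log 2$, observe $\nabla S_{\mathrm{FD}}(X_0)=0$ so the linear term in the Bregman divergence drops, and bound $S_{\mathrm{FD}}(X)\le 0$ via the scalar inequality $x\log x+(1-x)\log(1-x)\le 0$ on $[0,1]$. You are slightly more explicit about the diagonalization and the $0\log 0$ convention, but there is no substantive difference.
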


The proof is also given in Appendix \ref{app:elem}.

\section{Gradient estimator \label{sec:Gradient-estimator}}

An estimator for $\nabla\tilde{E}(X_{t})$ can be defined by using
a stochastic trace estimator \cite{hutchinson1989stochastic} to estimate
the density $\rho(X_{t})$, viewed as a matrix diagonal following
(\ref{eq:rhodiag}). Note that a similar estimator was used recently
in \cite{lindsey2023fastrandomizedentropicallyregularized}, which
also considered randomized algorithms for semidefinite programming.
Remarkably, we will require only a single shot per optimization step
in our estimator.

Indeed, let $z_{0},\ldots,z_{T-1}\in\R^{n}$ denote independent and
identically distributed standard Gaussian random vectors. Then define
\[
\hat{X}_{t}:=X_{t}^{1/2}z_{t}z_{t}^{\top}X_{t}^{1/2}=\left[X_{t}^{1/2}z_{t}\right]\left[X_{t}^{1/2}z_{t}\right]^{\top}.
\]
 Note that the matrix square root $X_{t}^{1/2}$ can be viewed as
a matrix function of $H_{t}$: 
\[
X_{t}^{1/2}=f_{\beta}^{1/2}(H_{t}),
\]
 where $\beta<+\infty$ for simplicity and $f_{\beta}^{1/2}$ is the
pointwise square root of the Fermi-Dirac function. We will discuss
below in Section \ref{subsec:Matrix-function-implementation} how
this allows us to construct the matrix-vector multiplication $X_{t}^{1/2}z_{t}$
efficiently, without forming the matrix $X_{t}^{1/2}$ directly. For
now we simply assume that this operation can be performed exactly.

Then define 
\begin{equation}
\hat{\rho}_{t}:=\rho(\hat{X}_{t})=\mathrm{diag}\left[\Psi\hat{X}_{t}\Psi^{\top}\right]=(\Psi X_{t}^{1/2}z_{t})^{\odot2},\label{eq:rhohat}
\end{equation}
 where the superscript denotes an entrywise power, and finally: 
\begin{equation}
\tilde{G}_{t}:=\nabla\tilde{E}(\hat{X}_{t})=\Psi^{\top}\mathrm{diag}^{*}\left[V\hat{\rho}_{t}\right]\Psi,\label{eq:tildeGt}
\end{equation}
 following the expression (\ref{eq:Etilderho}) for $\nabla\tilde{E}(X)$
in terms of $\rho(X)$.

It is useful to define the filtration $\{\mathcal{F}_{t}\}_{t=0}^{T-1}$
generated by the random vectors $\{z_{t}\}_{t=0}^{T-1}$. Then evidently
$X_{t+1}$ is measurable with respect to $\mathcal{F}_{t}$ for $t=0,\ldots,T-1$,
and moreover 

\[
\begin{aligned} & \E[\hat{X}_{t}\,\vert\,\mathcal{F}_{t-1}]\,=\,X_{t},\\
 & \E[\hat{\rho}_{t}\,\vert\,\mathcal{F}_{t-1}]\,=\,\rho(X_{t}),\\
 & \E[\tilde{G}_{t}\,\vert\,\mathcal{F}_{t-1}]\,=\,\nabla\tilde{E}(X_{t}).
\end{aligned}
\]
 In particular, $\tilde{G}_{t}$ defines an unbiased estimator for
the gradient at step $t$ (conditioned on the previous randomness).

We finally define our full gradient estimate as 
\begin{equation}
G_{t}=C+\tilde{G}_{t},\label{eq:Gt}
\end{equation}
 which satisfies 
\[
\E[G_{t}\,\vert\,\,\mathcal{F}_{t-1}]=\nabla E(X_{t}).
\]
It is also useful to define the error of the estimator: 
\begin{equation}
\Delta_{t}:=G_{t}-\nabla E(X_{t})=\tilde{G}_{t}-\nabla\tilde{E}(X_{t}),\label{eq:Delta}
\end{equation}
 which satisfies 
\[
\E[\Delta_{t}\,\vert\,\mathcal{F}_{t-1}]=0.
\]

\subsection{Sub-exponential concentration \label{subsec:Sub-exponential-concentration}}

We need concentration bounds for quantities related to the gradient
estimator. To formulate these, we state the definition of a sub-exponential
random variable, following the text of \cite{wainwright2019high}.
(We will use lower-case letters to denote scalar random variables
and vectors in this manuscript, in order to avoid confusion with the
capital-letter notation for matrices.) We slightly extend definitions
to deal with the fact that our sequence of iterates $X_{t}$ is random,
while at each iteration the random vector $z_{t}$ is chosen independently
of all preceding randomness.
\begin{defn}
[Definition 2.7 of \cite{wainwright2019high}] A random variable $x$
with mean $\mu=\E[x]$ is sub-exponential if there are non-negative
parameters $(\nu,b)$ such that 
\[
\E\left[e^{\lambda(x-\mu)}\right]\leq e^{\frac{\nu^{2}\lambda^{2}}{2}}\quad\text{for all }\vert\lambda\vert\leq\frac{1}{b}.
\]
 Given a $\sigma$-algebra $\mathcal{F}$, we will say that $x$ satisfying
$\mu=\E[x\,\vert\,\mathcal{F}]$ is sub-exponential with parameters
$(\nu,b)$, \emph{conditioned on $\mathcal{F}$}, if 
\[
\E\left[e^{\lambda(x-\mu)}\,\vert\,\mathcal{F}\right]\leq e^{\frac{\nu^{2}\lambda^{2}}{2}}\quad\text{for all }\vert\lambda\vert\leq\frac{1}{b}
\]
 holds almost surely. Here $\nu,b\geq0$ can be random variables which
are measurable with respect to $\mathcal{F}$.
\end{defn}

Importantly, sub-exponential random variables satisfy the following
tail bound, which we adapt from \cite{wainwright2019high}: 
\begin{prop}
[Proposition 2.9 of \cite{wainwright2019high}] \label{prop:subexp}
Suppose that $x$ is sub-exponential with parameters $(\nu,b)$ and
$\E[x]=\mu$. Then for $t\geq0$, 
\[
\P\left[x\geq\mu+t\right]\leq\begin{cases}
e^{-\frac{t^{2}}{2\nu^{2}}}, & \text{if }\,0\leq t\leq\frac{\nu^{2}}{b},\\
e^{-\frac{t}{2b}}, & \text{if }\,t>\frac{\nu^{2}}{b}.
\end{cases}
\]
 Likewise, if $x$ is sub-exponential with parameters $(\nu,b)$,
conditioned on some $\sigma$-algebra $\mathcal{F}$, and $\E[x\,\vert\,\mu]=\mathcal{F}$,
then for $t\geq0$, it holds almost surely that 
\[
\P\left[x\geq\mu+t\,\vert\,\mathcal{F}\right]\leq\begin{cases}
e^{-\frac{t^{2}}{2\nu^{2}}}, & \text{if }\,0\leq t\leq\frac{\nu^{2}}{b},\\
e^{-\frac{t}{2b}}, & \text{if }\,t>\frac{\nu^{2}}{b}.
\end{cases}
\]
\end{prop}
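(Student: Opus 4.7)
The plan is the standard Chernoff--Cram\'er argument. For the unconditional statement, I first apply Markov's inequality to $e^{\lambda(x-\mu)}$ for any $\lambda \geq 0$:
$$\P[x \geq \mu + t] \,=\, \P\!\left[e^{\lambda(x-\mu)} \geq e^{\lambda t}\right] \,\leq\, e^{-\lambda t}\,\E\!\left[e^{\lambda(x-\mu)}\right].$$
For $\lambda \in [0, 1/b]$, the sub-exponential hypothesis bounds the MGF by $e^{\nu^2 \lambda^2/2}$, so
$$\P[x \geq \mu + t] \,\leq\, \exp\!\left(-\lambda t + \tfrac{1}{2}\nu^2 \lambda^2\right).$$
It remains to minimize $g(\lambda) := -\lambda t + \tfrac{1}{2}\nu^2 \lambda^2$ over the admissible range $\lambda \in [0, 1/b]$.

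The unconstrained minimizer of $g$ on $\R$ is $\lambda_\star = t/\nu^2$, with $g(\lambda_\star) = -t^2/(2\nu^2)$. When $0 \leq t \leq \nu^2/b$, this $\lambda_\star$ lies inside $[0,1/b]$ and plugging it in yields the first branch of the tail bound. When $t > \nu^2/b$, the constrained minimizer is at the boundary $\lambda = 1/b$, where $g(1/b) = -t/b + \nu^2/(2b^2)$; using $\nu^2/b < t$ we have $\nu^2/(2b^2) < t/(2b)$, so $g(1/b) < -t/(2b)$, yielding the second branch.

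For the conditional statement, I repeat the same argument working throughout with conditional expectations. Conditional Markov's inequality together with the conditional MGF bound gives, almost surely,
$$\P[x \geq \mu + t \,\vert\, \mathcal{F}] \,\leq\, e^{-\lambda t}\,\E\!\left[e^{\lambda(x-\mu)} \,\vert\, \mathcal{F}\right] \,\leq\, \exp\!\left(-\lambda t + \tfrac{1}{2}\nu^2 \lambda^2\right),$$
for any $\mathcal{F}$-measurable $\lambda \geq 0$ satisfying $\lambda \leq 1/b$ almost surely. Crucially, because $\nu$ and $b$ are themselves $\mathcal{F}$-measurable, I am free to choose $\lambda$ to be the $\mathcal{F}$-measurable random variable $\min(t/\nu^2,\,1/b)$. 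The same two-case analysis as above then applies pointwise on each outcome $\omega$, producing exactly the stated conditional tail bound.

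There is no real obstacle; this is the textbook Bernstein-type tail bound for sub-exponentials. The only item requiring a moment of care is the conditional case, where one must verify that the optimizing $\lambda$ can be taken $\mathcal{F}$-measurable, which follows immediately from the $\mathcal{F}$-measurability of $(\nu, b)$.
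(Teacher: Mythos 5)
Your proof is correct and is exactly the route the paper has in mind: the statement is cited from Wainwright (Prop.\ 2.9), and the accompanying remark says the conditional version follows by redoing the Chernoff argument with conditional expectations, which is what you do, with the key observation (which you correctly flag) that the optimizing $\lambda = \min(t/\nu^2, 1/b)$ is $\mathcal{F}$-measurable. The paper's remark also notes a second, slightly slicker route---deduce the conditional bound directly from the unconditional one via regular conditional probabilities---which you do not mention, but your direct argument is equally valid.
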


\begin{rem}
The second statement can be proved just by following the ordinary
proof of the first statement, which uses the Chernoff technique. But
we can also view the second statement as following directly from the
first statement via the regular conditional probability.
\end{rem}

In order to state some results more transparently later on, we state
and prove the following simple corollary.
\begin{cor}
\label{cor:subexp} Suppose $x$ is sub-exponential with deterministic
parameters $(\nu,2\nu)$, conditioned on some $\sigma$-algebra $\mathcal{F}$,
and $\E[x\,\vert\,\mathcal{F}]=\mu$. Then for any $\delta\in(0,1]$,
\[
x\leq\mu+\nu\left(\frac{1}{2}+4\log(1/\delta)\right)
\]
 with probability at least $1-\delta$.
\end{cor}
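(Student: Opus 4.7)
The plan is to apply Proposition \ref{prop:subexp} directly with $b = 2\nu$ and the specific choice $t = \nu\bigl(\tfrac{1}{2} + 4\log(1/\delta)\bigr)$, and then simply verify which branch of the piecewise tail bound applies and that the exponent is at most $\log\delta$.

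First I would note that for $\delta \in (0,1]$ we have $\log(1/\delta) \geq 0$, so $t \geq \nu/2$. Since the threshold separating the two regimes in Proposition \ref{prop:subexp} is $\nu^2/b = \nu^2/(2\nu) = \nu/2$, this means we are always in the sub-exponential (linear exponent) regime. The role of the additive $\tfrac{1}{2}$ is precisely to push $t$ into this regime regardless of the value of $\delta$, so that the cleaner bound $e^{-t/(2b)}$ applies uniformly.

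Next I would compute the exponent directly. With $b = 2\nu$,
\[
\frac{t}{2b} \,=\, \frac{t}{4\nu} \,=\, \frac{1}{4}\left(\frac{1}{2} + 4\log(1/\delta)\right) \,=\, \frac{1}{8} + \log(1/\delta).
\]
Consequently $e^{-t/(2b)} = e^{-1/8}\,\delta \leq \delta$. Invoking Proposition \ref{prop:subexp} in its conditional form therefore yields $\P[x \geq \mu + t \,\vert\, \mathcal{F}] \leq \delta$ almost surely, from which the unconditional bound follows by taking expectations. There is essentially no obstacle here; the corollary is a packaging of Proposition \ref{prop:subexp} into a form that absorbs the piecewise structure into a single clean inequality, and the only `trick' is the choice of the constant $\tfrac{1}{2}$ to ensure we land on the linear branch.
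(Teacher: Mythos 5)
Your proof is correct and takes essentially the same approach as the paper's: both pick the additive $\nu/2$ so that the linear-exponent branch of Proposition \ref{prop:subexp} governs, both compute $t/(2b)=t/(4\nu)$, and both pass from the conditional to the unconditional statement by taking expectations. The paper phrases the key step as the inequality $e^{-t^2/(2\nu^2)}\leq e^{-t/(4\nu)}$ for $t\geq\nu/2$ (so the single bound $e^{-t/(4\nu)}$ covers both branches, including the boundary case $t=\nu/2$), then shifts and solves; you plug the specific $t$ in directly and verify, which yields the slightly sharper $e^{-1/8}\delta\leq\delta$ — a cosmetic difference only.
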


The proof is given in Appendix \ref{app:subexp}.

Sub-exponential concentration bounds are relevant because if $z\sim\mathcal{N}(0,\mathbf{I}_{n})$,
the trace estimator $z^{\top}Az$, which satisfies $\E[z^{\top}Az]=\Tr[A]$,
is a sub-exponential random variable. The result is true more generally
the entries of $z$ are i.i.d. sub-Gaussian random variables with
zero mean and unit variance \cite{Meyer2021-pn}. However, for simplicity,
we just consider the case of Gaussian $z$ and extend the usual statement
to account for the fact that the matrix $A$ may be random, while
$z$ is chosen independently of $A$.
\begin{lem}
\label{lem:hutchsubexp} Let $\mathcal{F}$ be a $\sigma$-algebra,
and let $A\in\R^{n\times n}$ be a random matrix that is measurable
with respect to $\mathcal{F}$. Let $z\sim\mathcal{N}(0,\mathbf{I}_{n})$
be independent of $\mathcal{F}$. Then $z^{\top}Az$ is sub-exponential
with parameters $(2\Vert A\Vert_{\mathrm{F}},4\Vert A\Vert)$, conditioned
on $\mathcal{F}$.
\end{lem}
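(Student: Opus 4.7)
The plan is to adapt the standard Laurent-Massart-style sub-exponential bound for Gaussian quadratic forms, being careful with the fact that $A$ is random while $z$ is independent of it.

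First, I would reduce to the case where $A$ is symmetric. Writing $A = S + K$ with $S := (A + A^{\top})/2$ and $K := (A - A^{\top})/2$, the antisymmetry of $K$ yields $z^{\top} K z = 0$, so $z^{\top} A z = z^{\top} S z$. By Pythagoras in the Frobenius inner product $\Vert S \Vert_{\mathrm{F}} \leq \Vert A \Vert_{\mathrm{F}}$, and by the triangle inequality for the operator norm $\Vert S \Vert \leq \Vert A \Vert$, so it suffices to prove the bound assuming $A$ symmetric.

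Next, I would condition on $\mathcal{F}$. Since $A$ is $\mathcal{F}$-measurable, under this conditioning it may be treated as fixed, and it admits a spectral decomposition $A = U D U^{\top}$ with $D = \mathrm{diag}(\lambda_1, \ldots, \lambda_n)$ and $U$ orthogonal (both $\mathcal{F}$-measurable). Because $z \sim \mathcal{N}(0, \mathbf{I}_n)$ is independent of $\mathcal{F}$, the rotational invariance of the standard Gaussian implies that $U^{\top} z$ is again conditionally $\mathcal{N}(0, \mathbf{I}_n)$ given $\mathcal{F}$. Hence, conditionally on $\mathcal{F}$,
\[
z^{\top} A z \;\stackrel{d}{=}\; \sum_{i=1}^n \lambda_i\, g_i^2,
\]
where $g_1, \ldots, g_n$ are conditionally i.i.d.\ standard Gaussians.

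Finally, I would compute and bound the conditional moment generating function. Using the standard chi-squared identity $\E[e^{s(g^2 - 1)}] = e^{-s}/\sqrt{1 - 2s}$ for $s < 1/2$, the MGF of $z^{\top} A z - \Tr[A]$ factorizes as a product over $i$, and taking logs and Taylor-expanding gives
\[
\sum_{i=1}^n \left[ -\lambda \lambda_i - \tfrac{1}{2} \log(1 - 2\lambda \lambda_i) \right] = \sum_{i=1}^n \sum_{k \geq 2} \frac{(2\lambda \lambda_i)^k}{2k},
\]
valid for $\vert \lambda \vert < 1/(2\Vert A \Vert)$. Restricting to $\vert \lambda \vert \leq 1/(4\Vert A \Vert)$ ensures $2\vert \lambda \lambda_i \vert \leq 1/2$ for all $i$, and summing the resulting geometric tail in $k$ yields an upper bound of $2 \lambda^2 \sum_i \lambda_i^2 = 2 \lambda^2 \Vert A \Vert_{\mathrm{F}}^2$, i.e., a conditional MGF bound of $\exp\bigl((2\Vert A \Vert_{\mathrm{F}})^2 \lambda^2/2\bigr)$. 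This matches the sub-exponential definition with $(\nu, b) = (2\Vert A \Vert_{\mathrm{F}}, 4\Vert A \Vert)$. There is no serious obstacle; the only subtlety to track is that every step above must hold almost surely after conditioning, which is automatic since $U$ and $D$ inherit $\mathcal{F}$-measurability from $A$ and each inequality is applied pointwise in $\omega$.
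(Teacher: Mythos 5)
Your proposal is correct and follows essentially the same route as the paper: reduce to deterministic symmetric $A$ by conditioning and symmetrization, diagonalize, use rotational invariance to reduce to a sum of independent weighted $\chi^2_1$ variables, and bound the factorized MGF. The only cosmetic difference is that you re-derive the per-coordinate sub-exponential bound for $g^2-1$ directly via the $\chi^2$ MGF and a Taylor expansion, whereas the paper simply cites that $g^2$ is sub-exponential with parameters $(2,4)$.
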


The proof is also given in Appendix \ref{app:subexp}.

We will need one more definition and result from \cite{wainwright2019high}:
\begin{defn}
[(2.26) of \cite{wainwright2019high}] \label{def:mds} We say that
a sequence $\{y_{t}\}_{t=0}^{T-1}$, adapted to the filtration $\{\mathcal{F}_{t}\}_{t=0}^{T-1}$,
is a \emph{martingale difference sequence }if $\E\left[\vert y_{t}\vert\right]<+\infty$
for all $t=0,\ldots,T-1$ and
\[
\E[y_{t}\,\vert\,\mathcal{F}_{t-1}]=0
\]
 for all $t=1,\ldots,T-1$.
\end{defn}

Martingale difference sequences satisfy the following Bernstein-type
concentration bound: 
\begin{thm}
[Theorem 2.19 of \cite{wainwright2019high}] \label{thm:mds} Let
$\{y_{t}\}_{t=0}^{T-1}$ be a martingale difference sequence adapted
to the filtration $\{\mathcal{F}_{t}\}_{t=0}^{T-1}$, and suppose
that $y_{t}$ is sub-exponential with \emph{deterministic }parameters
$(\nu_{t},b_{t})$, conditioned on $\mathcal{F}_{t-1}$. Then $\sum_{t=0}^{T-1}y_{t}$
is sub-exponential with parameters 
\[
\left(\sqrt{\sum_{t=0}^{T-1}\nu_{t}^{2}},\ \max_{t=0,\ldots,T-1}\{b_{t}\}\right).
\]
\end{thm}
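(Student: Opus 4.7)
The plan is to use the standard Chernoff/iterated-conditioning approach for martingale MGF bounds. Writing $S_T := \sum_{t=0}^{T-1} y_t$ and $\bar\nu := \sqrt{\sum_t \nu_t^2}$, $\bar b := \max_t b_t$, I want to show that for every $\lambda$ with $|\lambda| \leq 1/\bar b$,
\[
\E\!\left[e^{\lambda S_T}\right] \;\leq\; e^{\bar\nu^{2} \lambda^{2}/2}.
\]
Since $|\lambda| \leq 1/\bar b \leq 1/b_t$ for each $t$, the hypothesis applied conditionally gives $\E[e^{\lambda y_t}\mid\mathcal{F}_{t-1}] \leq e^{\nu_t^{2}\lambda^{2}/2}$ almost surely, which is the key ingredient.

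The main step is an iterated tower-property argument. Because the sequence is adapted, $y_0,\ldots,y_{T-2}$ are all $\mathcal{F}_{T-2}$-measurable, so
\[
\E\!\left[e^{\lambda S_T}\right] \;=\; \E\!\left[e^{\lambda \sum_{t=0}^{T-2} y_t}\,\E\!\left[e^{\lambda y_{T-1}}\,\middle|\,\mathcal{F}_{T-2}\right]\right] \;\leq\; e^{\nu_{T-1}^{2}\lambda^{2}/2}\,\E\!\left[e^{\lambda \sum_{t=0}^{T-2} y_t}\right].
\]
Peeling off one term at a time in the same way (at each stage the remaining partial sum is measurable with respect to the next-earlier $\sigma$-algebra, and the conditional sub-exponential bound is deterministic, hence can be pulled outside the expectation), I obtain
\[
\E\!\left[e^{\lambda S_T}\right] \;\leq\; \prod_{t=0}^{T-1} e^{\nu_t^{2}\lambda^{2}/2} \;=\; e^{(\sum_{t} \nu_t^{2})\lambda^{2}/2},
\]
which is exactly the claimed MGF bound for the pair $(\bar\nu,\bar b)$, valid on the required interval $|\lambda|\leq 1/\bar b$. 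Finally, since $\E[y_t]=\E\!\left[\E[y_t\mid\mathcal{F}_{t-1}]\right]=0$ for each $t$, $S_T$ has mean zero, matching the centering in the definition of sub-exponentiality.

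The only subtlety is that the conditional sub-exponential parameters $(\nu_t,b_t)$ are assumed \emph{deterministic}; this is what allows the factor $e^{\nu_t^2\lambda^2/2}$ to come out of the outer expectation at each peeling step. If the parameters were only $\mathcal{F}_{t-1}$-measurable random variables, the same proof would go through but would yield a bound involving $\sum_t \nu_t^2$ as a random quantity, which is not what the statement asserts. No other real obstacle arises; once the conditional MGF inequality is invoked with $|\lambda|\leq 1/\bar b$, the induction is purely mechanical.
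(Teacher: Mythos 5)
Your proof is correct; it is the standard iterated-conditioning (Chernoff peeling) argument, and it matches the proof given in the cited source (Theorem 2.19 of Wainwright), which the paper itself does not reproduce. Your closing remark correctly identifies the reason the hypothesis insists on \emph{deterministic} parameters $(\nu_t, b_t)$: it is precisely what lets the factor $e^{\nu_t^2\lambda^2/2}$ escape the outer expectation at each peeling step, and the paper's emphasis on that word in the statement is there for exactly this reason.
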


\subsection{Gradient concentration bounds \label{subsec:Gradient-concentration-bounds}}

We can apply Lemma \ref{lem:hutchsubexp} in several ways to obtain
the following concentration bounds which will be used to control the
convergence rate of (\ref{eq:update}) with high probability.

The first is a bound on the spectral norm of the gradient.
\begin{lem}
\label{lem:Gestbound} For any $\delta\in(0,1]$, the inequality 
\[
\max_{t=0,\ldots,T-1}\Vert\hat{G}_{t}\Vert\leq2\left(1+4\log(Tm/\delta)\right)c_{\mathrm{h}}
\]
 holds with probability at least $1-\delta$.
\end{lem}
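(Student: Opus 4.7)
The plan is to unwind the definition of the gradient estimator to expose a Hutchinson trace estimator at each interpolation point, then apply the sub-exponential tools already developed together with a union bound.

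First I would reduce the spectral norm to the density estimator. Since $\Psi$ has orthonormal columns and $\mathrm{diag}^{*}[V\hat{\rho}_{t}]$ is diagonal,
\[
\Vert\tilde{G}_{t}\Vert \;=\; \bigl\Vert\Psi^{\top}\mathrm{diag}^{*}[V\hat{\rho}_{t}]\Psi\bigr\Vert \;\leq\; \bigl\Vert\mathrm{diag}^{*}[V\hat{\rho}_{t}]\bigr\Vert \;=\; \Vert V\hat{\rho}_{t}\Vert_{\infty} \;\leq\; \Vert V\Vert_{\infty}\,\Vert\hat{\rho}_{t}\Vert_{\infty}.
\]
So it suffices to produce a high-probability bound on $\max_{t,q}\hat{\rho}_{t}^{(q)}$, since the result will then follow from $c_{\mathrm{h}}=c_{\Psi}\Vert V\Vert_{\infty}$. (I read the symbol $\hat G_t$ in the statement as $\tilde G_t$, in line with the Hartree-gradient scaling $c_{\mathrm{h}}$.)

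Next I would rewrite each scalar entry as a Hutchinson estimator. Letting $\Psi_{q,:}\in\R^{n}$ denote the $q$-th row of $\Psi$, one has $\hat{\rho}_{t}^{(q)}=z_{t}^{\top}A_{q,t}z_{t}$ with $A_{q,t}:=X_{t}^{1/2}\Psi_{q,:}^{\top}\Psi_{q,:}X_{t}^{1/2}$, which is rank one, PSD, and $\mathcal{F}_{t-1}$-measurable since $X_t$ is. By Lemma~\ref{lem:hutchsubexp}, $\hat{\rho}_{t}^{(q)}$ is sub-exponential with parameters $(2\Vert A_{q,t}\Vert_{\mathrm{F}},4\Vert A_{q,t}\Vert)$ conditional on $\mathcal{F}_{t-1}$, and the mean is $\Tr[A_{q,t}]=\rho_{q}(X_{t})$. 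Because $A_{q,t}$ is rank one, $\Vert A_{q,t}\Vert_{\mathrm{F}}=\Vert A_{q,t}\Vert=\rho_{q}(X_{t})$. Invoking Lemma~\ref{lem:Gbound} together with $\Vert X_{t}\Vert\leq 1$ gives $\rho_{q}(X_{t})\leq c_{\Psi}$, so $\hat{\rho}_{t}^{(q)}$ is sub-exponential with \emph{deterministic} parameters $(2c_{\Psi},4c_{\Psi})=(\nu,2\nu)$ for $\nu=2c_{\Psi}$, conditional on $\mathcal{F}_{t-1}$.

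Then I would apply Corollary~\ref{cor:subexp} pointwise with confidence level $\delta'$, yielding, almost surely,
\[
\hat{\rho}_{t}^{(q)} \;\leq\; \rho_{q}(X_{t}) + 2c_{\Psi}\!\left(\tfrac{1}{2}+4\log(1/\delta')\right) \;\leq\; 2c_{\Psi}\bigl(1+4\log(1/\delta')\bigr)
\]
with conditional probability at least $1-\delta'$, using $\rho_{q}(X_{t})\leq c_{\Psi}$ once more. A union bound over the $Tm$ pairs $(t,q)\in\{0,\ldots,T-1\}\times\{1,\ldots,m\}$ with $\delta'=\delta/(Tm)$ then gives $\max_{t,q}\hat{\rho}_{t}^{(q)}\leq 2c_{\Psi}(1+4\log(Tm/\delta))$ with probability at least $1-\delta$. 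Multiplying by $\Vert V\Vert_{\infty}$ and using the first step completes the bound.

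The only mildly delicate point is the conditioning: one must pass from the random, $\mathcal{F}_{t-1}$-measurable sub-exponential parameters $\rho_{q}(X_{t})$ to the deterministic upper bound $c_{\Psi}$ before invoking Corollary~\ref{cor:subexp}, and then stitch the pointwise high-probability events into a single statement by the union bound while keeping each step conditional on the correct $\sigma$-algebra. Everything else is routine bookkeeping.
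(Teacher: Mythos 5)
Your argument mirrors the paper's proof step for step: the reduction of $\Vert\tilde{G}_{t}\Vert$ to $\Vert V\Vert_{\infty}\Vert\hat{\rho}_{t}\Vert_{\infty}$, the identification of each $[\hat{\rho}_{t}]_{q}$ as a rank-one Hutchinson trace estimate with $\Vert A\Vert_{\mathrm{F}}=\Vert A\Vert=\rho_{q}(X_{t})\leq c_{\Psi}$ via Lemma~\ref{lem:Gbound}, the conditional application of Lemma~\ref{lem:hutchsubexp} and Corollary~\ref{cor:subexp} with the deterministic parameters $(2c_{\Psi},4c_{\Psi})$, and the union bound over $Tm$ pairs all coincide with the paper's reasoning, and the final constant $2(1+4\log(Tm/\delta))c_{\mathrm{h}}$ matches. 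You also correctly flag that $\hat{G}_{t}$ in the statement should be read as $\tilde{G}_{t}$.
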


\begin{proof}
Note that each entry of $\hat{\rho}_{t}=\mathrm{diag}\left[\Psi\hat{X}_{t}\Psi^{\top}\right]$
(cf. (\ref{eq:rhohat})) can be viewed as a trace estimate via 
\[
[\hat{\rho}_{t}]_{q}=z_{t}^{\top}\left[X_{t}^{1/2}\Psi^{\top}e_{q}e_{q}^{\top}\Psi X_{t}^{1/2}\right]z_{t}.
\]
 Hence by Lemma \ref{lem:hutchsubexp}, for $t\geq1$ the random variable
$[\hat{\rho}_{t}]_{q}$ is sub-exponential with parameters $(2\Vert A_{t}\Vert_{\mathrm{F}},4\Vert A_{t}\Vert)$,
conditioned on $\mathcal{F}_{t-1}$, where $A_{t}:=X_{t}^{1/2}\Psi^{\top}e_{q}e_{q}^{\top}\Psi X_{t}^{1/2}$.
(When $t=0$, $A_{t}$ is deterministic, so $\hat{\rho}_{1}$ is simply
sub-exponential with these parameters.)

This matrix $A_{t}$ is rank-one and we can compute $\Vert A_{t}\Vert_{\mathrm{F}}=\Vert A_{t}\Vert=\sqrt{\Tr[A_{t}^{2}]}=[\rho(X_{t})]_{q}$.
From Lemma \ref{lem:Gbound}, we have that $\Vert\rho(X_{t})\Vert_{\infty}\leq c_{\Psi}\Vert X_{t}\Vert\leq c_{\Psi}$.
Therefore each entry $[\hat{\rho}_{t}]_{q}$ is sub-exponential with
the deterministic parameters $(2c_{\Psi},4c_{\Psi})$, conditioned
on $\mathcal{F}_{t-1}$. (Again, we don't need to condition for $t=0$.)

We can then apply Corollary \ref{cor:subexp} with $\nu=2c_{\Psi}$
to deduce that 
\[
[\hat{\rho}_{t}]_{q}\leq[\rho(X_{t})]_{q}+c_{\Psi}\left(1+8\log(Tm/\delta)\right)
\]
 holds with probability at least $1-\frac{\delta}{Tm}$.

Then by the union bound, together with the fact that $[\rho(X_{t})]_{q}\leq\Vert\rho(X_{t})\Vert_{\infty}\leq c_{\Psi}$,
we conclude that 
\[
\Vert\hat{\rho}_{t}\Vert_{\infty}\leq2\left(1+4\log(Tm/\delta)\right)c_{\Psi}
\]
 holds for all $t=0,\ldots,T-1$ with probabilty at least $1-\delta$.

Then it follows that 
\[
\Vert\hat{G}_{t}\Vert=\Vert\Psi^{\top}\mathrm{diag}^{*}\left[V\hat{\rho}_{t}\right]\Psi\Vert\leq\Vert V\Vert_{\infty}\Vert\hat{\rho}_{t}\Vert_{\infty}\leq2\left(1+4\log(Tm/\delta)\right)c_{\mathrm{h}}
\]
 holds for all $t=0,\ldots,T-1$ with probability at least $1-\delta$,
as was to be shown.
\end{proof}
Next is a fact that we will need in combination with Theorem \ref{thm:mds}
above.
\begin{lem}
\label{lem:mdsprep} Suppose $Y\in\R^{n\times n}$ is random but measurable
with respect to $\mathcal{F}_{t-1}$ for some $t\geq1$, and define
$y=\left\langle \Delta_{t},Y\right\rangle $. Assume that $\Vert Y\Vert\leq1$.
Then $y$ is sub-exponential with parameters $(2c_{\mathrm{h}}\Vert X_{t}\Vert_{\mathrm{F}},4c_{\mathrm{h}})$,
conditioned on $\mathcal{F}_{t-1}$.
\end{lem}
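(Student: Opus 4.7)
The plan is to rewrite $y = \left\langle \Delta_t, Y\right\rangle$ as a centered Hutchinson-type quadratic form in the Gaussian vector $z_t$, and then invoke Lemma \ref{lem:hutchsubexp}. The key observation is that the randomness in $\Delta_t$ enters only through $\hat\rho_t$, which is itself a diagonal trace estimator built from $z_t$.

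First, unpacking (\ref{eq:tildeGt}) and (\ref{eq:Etilderho}), together with the adjoint relation $\left\langle \mathrm{diag}^*[w], M\right\rangle = \left\langle w, \mathrm{diag}[M]\right\rangle$ and the symmetry of $V$, one gets
\[
y = a^\top\bigl(\hat\rho_t - \rho(X_t)\bigr), \qquad a := V\,\mathrm{diag}[\Psi Y \Psi^\top] \in \R^m,
\]
which is $\mathcal{F}_{t-1}$-measurable by hypothesis on $Y$. Substituting $\hat\rho_t = (\Psi X_t^{1/2} z_t)^{\odot 2}$ from (\ref{eq:rhohat}) and using $\rho(X_t) = \mathrm{diag}[\Psi X_t \Psi^\top]$, a short rearrangement gives
\[
y = z_t^\top A z_t - \mathrm{Tr}[A], \qquad A := X_t^{1/2} \Psi^\top \mathrm{diag}^*[a]\, \Psi X_t^{1/2},
\]
with $A$ being $\mathcal{F}_{t-1}$-measurable and $z_t$ independent of $\mathcal{F}_{t-1}$. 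Lemma \ref{lem:hutchsubexp} then guarantees that $y$ is sub-exponential with parameters $(2\Vert A\Vert_{\mathrm{F}}, 4\Vert A\Vert)$, conditioned on $\mathcal{F}_{t-1}$.

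All that remains is to show $\Vert A\Vert \leq c_{\mathrm{h}}$ and $\Vert A\Vert_{\mathrm{F}} \leq c_{\mathrm{h}} \Vert X_t\Vert_{\mathrm{F}}$. The operator bound is straightforward: since $\Vert X_t^{1/2}\Vert \leq 1$, $\Vert \Psi\Vert = 1$, and $\Vert \mathrm{diag}^*[a]\Vert = \Vert a\Vert_\infty$, it suffices to bound $\Vert a\Vert_\infty$; applying the entrywise calculation from the proof of Lemma \ref{lem:Gbound} together with the hypothesis $\Vert Y\Vert \leq 1$ yields $\Vert a\Vert_\infty \leq c_\Psi \Vert V\Vert_\infty = c_{\mathrm{h}}$.

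The Frobenius estimate is the main obstacle, since a naive bound $\Vert A\Vert_{\mathrm{F}} \leq \sqrt{n}\,\Vert A\Vert$ would introduce an unwanted factor of $\sqrt{n}$. I would instead factor $A = X_t^{1/2} B X_t^{1/2}$ with $B := \Psi^\top \mathrm{diag}^*[a] \Psi$, and compute
\[
\Vert A\Vert_{\mathrm{F}}^2 \;=\; \mathrm{Tr}[A^2] \;=\; \mathrm{Tr}[(BX_t)^2] \;\leq\; \Vert BX_t\Vert_{\mathrm{F}}^2 \;\leq\; \Vert B\Vert^2\,\Vert X_t\Vert_{\mathrm{F}}^2,
\]
where the middle inequality is the Cauchy--Schwarz bound $\mathrm{Tr}[M^2] \leq \Vert M\Vert_{\mathrm{F}}^2$ applied to $M = BX_t$ (legitimate here because $BX_t$ is similar to the symmetric matrix $A$, so $\mathrm{Tr}[(BX_t)^2]$ is nonnegative) and the last inequality is the standard submultiplicative estimate. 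Since $\Vert B\Vert \leq c_{\mathrm{h}}$ by the same reasoning used for $\Vert A\Vert$, the desired bound follows.
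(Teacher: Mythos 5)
Your proof is correct and follows essentially the same route as the paper: you identify $y$ as a centered Hutchinson quadratic form $z_t^\top A z_t - \Tr[A]$ with $A = X_t^{1/2}\Psi^\top\mathrm{diag}^*[a]\Psi X_t^{1/2}$, which upon noting $a = V\rho(Y)$ is exactly the paper's $A = X_t^{1/2}\nabla\tilde{E}(Y)X_t^{1/2}$, and the spectral- and Frobenius-norm bounds are the same as the paper's (the parenthetical appeal to nonnegativity of $\Tr[(BX_t)^2]$ is unnecessary, since $\Tr[M^2]\leq\Vert M\Vert_{\mathrm{F}}^2$ follows from Cauchy--Schwarz regardless of sign, but this is harmless). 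The only cosmetic difference is that the paper reaches $A$ via the self-adjointness identity $\langle\nabla\tilde{E}(\hat X_t),Y\rangle = \langle\hat X_t,\nabla\tilde{E}(Y)\rangle$, whereas you unpack the THC factors $\Psi$ and $V$ explicitly.
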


\begin{proof}
Compute 
\begin{align*}
\left\langle \tilde{G}_{t},Y\right\rangle  & =\left\langle \nabla\tilde{E}(\hat{X}_{t}),Y\right\rangle \\
 & =\sum_{ijkl}Y_{ij}\,v_{ij,kl}\,[\hat{X}_{t}]_{kl}\\
 & =\left\langle \hat{X}_{t},\nabla\tilde{E}(Y)\right\rangle \\
 & =\left\langle X_{t}^{1/2}z_{t}z_{t}^{\top}X_{t}^{1/2},\nabla\tilde{E}(Y)\right\rangle \\
 & =z_{t}^{\top}\left[X_{t}^{1/2}\nabla\tilde{E}(Y)X_{t}^{1/2}\right]z_{t},
\end{align*}
 so $\left\langle \tilde{G}_{t},Y\right\rangle $ can be viewed as
a trace estimator for $A:=X_{t}^{1/2}\nabla\tilde{E}(Y)X_{t}^{1/2}$
(which is itself measurable with respect to $\mathcal{F}_{t-1}$).
Then by Lemma \ref{lem:hutchsubexp}, we have that $\left\langle \tilde{G}_{t},Y\right\rangle $
is sub-exponential with parameters $(2\Vert A\Vert_{\mathrm{F}},4\Vert A\Vert)$,
conditioned on $\mathcal{F}_{t-1}$.

Since $Y$ is measurable with respect to $\mathcal{F}_{t-1}$, we
have that 
\[
\E\left[\left\langle \tilde{G}_{t},Y\right\rangle \,\vert\,\mathcal{F}_{t-1}\right]=\left\langle \nabla\tilde{E}(X_{t}),Y\right\rangle ,
\]
 and therefore
\[
y=\left\langle \tilde{G}_{t},Y\right\rangle -\E\left[\left\langle \tilde{G}_{t},Y\right\rangle \,\vert\,\mathcal{F}_{t-1}\right],
\]
 i.e., $y$ is the difference of $\left\langle \tilde{G}_{t},Y\right\rangle $
and its conditional mean. It follows that $y$ is sub-exponential
with the same parameters, conditioned on $\mathcal{F}_{t-1}$.

Now we can compute 
\[
\Vert A\Vert=\Vert X_{t}^{1/2}\nabla\tilde{E}(Y)X_{t}^{1/2}\Vert\leq\Vert X_{t}^{1/2}\Vert^{2}\,\Vert\nabla\tilde{E}(Y)\Vert\leq c_{\mathrm{h}},
\]
 where we have used Lemma \ref{lem:Gbound} and the fact that $\Vert X_{t}\Vert\leq1$
in the last inequality.

Moreover, 
\begin{align*}
\Vert A\Vert_{\mathrm{F}}^{2} & =\Tr\left[\nabla\tilde{E}(Y)X_{t}\nabla\tilde{E}(Y)X_{t}\right]\\
 & =\left\langle \left(\nabla\tilde{E}(Y)X_{t}\right)^{\top},\nabla\tilde{E}(Y)X_{t}\right\rangle \\
 & \leq\Vert\nabla\tilde{E}(Y)X_{t}\Vert_{\mathrm{F}}^{2}\\
 & \leq\Vert\nabla\tilde{E}(Y)\Vert^{2}\Vert X_{t}\Vert_{\mathrm{F}}^{2}\\
 & \leq c_{\mathrm{h}}^{2}\Vert X_{t}\Vert_{\mathrm{F}}^{2},
\end{align*}
 and this completes the proof.
\end{proof}

\subsection{Matrix function implementation \label{subsec:Matrix-function-implementation}}

In order to implement the gradient estimator we must perform the matrix-vector
multiplications $X_{t}^{1/2}z_{t}$ within the construction of $\hat{\rho}_{t}$
(\ref{eq:rhohat}). We already commented that when $\beta<+\infty$,
we can think of 
\[
X_{t}^{1/2}=f_{\beta}^{1/2}(H_{t})=f_{1}^{1/2}(\beta H_{t}).
\]

One standard approach to such a matrix-vector multiplication involving
a matrix function is to perform Chebyshev approximation \cite{trefethen2019approximation}
of order $r$ of $f_{\beta}^{1/2}$ on an interval that bounds the
spectrum of $H_{t}$. (Equivalently, we can perform Chebyshev approximation
of $f_{1}^{1/2}$ on an interval bounding the spectrum of $\beta H_{t}$.)
Then the three-term recurrence \cite{trefethen2019approximation}
can be used to compute $f_{\beta}^{1/2}(H_{t})z_{t}$ using $r$ matrix-vector
multiplications by $H_{t}$, which we can construct efficiently.

Another approach involves the approximation of $f_{\beta}^{1/2}$
using a contour integral approach. A similar approach for approximating
the Fermi-Dirac function is applied within PEXSI method \cite{lin2009pole,CMS2009,JCPM2013}
for DFT, but here we must adapt the approach to deal with the branch
cut due to the square root. Ultimately, this approach requires us
to solve several linear systems involving shifted copies of the effective
Hamiltonian. The details are given in Appendix \ref{app:contour}.

We will not consider the error due to matrix function approximation
in our ensuing analysis as it can be made exponentially small by increasing
the polynomial order (in the first approach) or the number of poles
(in the second approach). However, we comment that the second approach
can benefit from a good preconditioner, which is particularly useful
when a larger basis set is used and the norm of the discretized Laplacian
operator appearing within $H_{t}$ contributes to the conditioning.

Either approach motivates us to bound the spectrum of $H_{t}$ over
the optimization trajectory. This can be done in terms of the following
bound for the gradient over the optimization trajectory.
\begin{defn}
\label{def:gmax} Let 
\[
\tilde{g}_{\max}=\max_{t=0,\ldots,T-1}\Vert\tilde{G}_{t}\Vert.
\]
\end{defn}

\begin{rem}
Recall from Lemma \ref{lem:Gestbound} that $\hat{g}_{\max}=O(c_{\mathrm{h}}\,\log(Tm/\delta))$
with probability at least $1-\delta$.
\end{rem}

\begin{lem}
\label{lem:EffHambound} Assume $\beta<+\infty$, fix $T$, and assume
that $\gamma_{t}\in(0,\beta]$ for all $t=0,\ldots,T-1$. Then 
\[
\Vert H_{t}\Vert\leq\Vert C-\mu\mathbf{I}_{n}\Vert+\tilde{g}_{\max}
\]
 for all $t=0,\ldots,T$.
\end{lem}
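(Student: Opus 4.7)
The plan is to proceed by a straightforward induction on $t$, exploiting the fact that the effective Hamiltonian update (\ref{eq:hamiltonianupdate}) expresses $H_{t+1}$ as a convex combination of $H_t$ and $G_t - \mu \mathbf{I}_n$. The spectral norm is a norm, hence subadditive and absolutely homogeneous, so convex combinations cannot increase an upper bound that is satisfied by both constituents.

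Concretely, for the base case, $H_0 = 0$ has spectral norm zero, which trivially satisfies the desired bound. For the inductive step, I will assume $\Vert H_t \Vert \leq \Vert C - \mu \mathbf{I}_n \Vert + \tilde{g}_{\max}$ and analyze
\[
H_{t+1} = (1 - \beta^{-1}\gamma_t) H_t + \beta^{-1}\gamma_t (G_t - \mu \mathbf{I}_n).
\]
The hypothesis $\gamma_t \in (0, \beta]$ ensures that $\beta^{-1}\gamma_t \in (0,1]$, so both coefficients $1 - \beta^{-1}\gamma_t$ and $\beta^{-1}\gamma_t$ are nonnegative and sum to one. Applying the triangle inequality and decomposing $G_t - \mu \mathbf{I}_n = (C - \mu \mathbf{I}_n) + \tilde{G}_t$ via (\ref{eq:Gt}) yields
\[
\Vert G_t - \mu \mathbf{I}_n \Vert \leq \Vert C - \mu \mathbf{I}_n \Vert + \Vert \tilde{G}_t \Vert \leq \Vert C - \mu \mathbf{I}_n \Vert + \tilde{g}_{\max}
\]
by Definition~\ref{def:gmax}. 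Combining this with the inductive hypothesis gives
\[
\Vert H_{t+1} \Vert \leq (1 - \beta^{-1}\gamma_t)(\Vert C - \mu \mathbf{I}_n \Vert + \tilde{g}_{\max}) + \beta^{-1}\gamma_t (\Vert C - \mu \mathbf{I}_n \Vert + \tilde{g}_{\max}),
\]
which collapses to the desired bound.

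There is no real obstacle here; the proof amounts to observing that the update is an affine averaging step with nonnegative weights summing to one, so the spectral norm of the iterate is dominated uniformly by a bound on the two inputs. The role of the assumption $\gamma_t \leq \beta$ is precisely to make this a genuine convex combination, which is why the restriction on step sizes foreshadowed in Section~\ref{subsec:proximal} enters crucially here.
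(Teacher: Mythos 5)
Your proof is correct and follows exactly the same approach as the paper: induction on $t$ using the fact that the update (\ref{eq:hamiltonianupdate}) is a convex combination (since $\gamma_t \leq \beta$), with the triangle inequality applied to $G_t - \mu\mathbf{I}_n = (C - \mu\mathbf{I}_n) + \tilde{G}_t$ and the bound $\Vert \tilde{G}_t \Vert \leq \tilde{g}_{\max}$. The paper states the induction slightly more tersely, but the underlying argument is identical.
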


\begin{proof}
From (\ref{eq:hamiltonianupdate}) we know that the effective Hamiltonian
satisfies the update 
\[
H_{t+1}=(1-\beta^{-1}\gamma_{t})H_{t}+(\beta^{-1}\gamma_{t})\,(G_{t}-\mu\mathbf{I}_{n}).
\]
 Moreover, $G_{t}=C+\tilde{G}_{t},$ so 
\[
\Vert H_{t+1}\Vert\leq(1-\beta^{-1}\gamma_{t})\Vert H_{t}\Vert+(\beta^{-1}\gamma_{t})\,\left(\Vert C-\mu\mathbf{I}_{n}\Vert+\tilde{g}_{\max}\right)
\]
 for all $t=0,\ldots,T-1$.

Then by induction (noting that $H_{0}=0$) it follows that 
\[
\Vert H_{t}\Vert\leq\Vert C-\mu\mathbf{I}_{n}\Vert+\tilde{g}_{\max}
\]
 for all $t=0,\ldots,T$.
\end{proof}

\section{Convergence theorem and proof \label{sec:Convergence}}

Now we prove the convergence of the algorithm (\ref{eq:update}).
For the proof, we are inspired by \cite{beck2003mirror}, from which
we adapt several key facts and arguments. In particular, we reproduce
the statement of this lemma (Lemma 2 from \cite{chen1993convergence}): 
\begin{lem}
\label{lem:pythagoras}Let $\Phi$ be convex on $\mathcal{X}$, $\tilde{X}\in\mathcal{X}$,
and 
\[
X'=\underset{X\in\mathcal{X}}{\textrm{argmin}}\left\{ \Phi(X)+D(X\Vert\tilde{X})\right\} .
\]
 Then for any $Y\in\mathcal{X}$, we have 
\[
\Phi(Y)+D(Y\Vert\tilde{X})\geq\Phi(X')+D(X'\Vert\tilde{X})+D(Y\Vert X').
\]
\end{lem}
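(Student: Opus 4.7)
The plan is to combine three ingredients: a three-point algebraic identity for the Bregman divergence induced by $S_{\mathrm{FD}}$, the convexity of $\Phi$, and the first-order optimality condition at $X'$. This is the standard route to such ``generalized Pythagoras'' inequalities for Bregman divergences.

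First I would record the purely algebraic three-point identity. Substituting the definition $D(U\Vert V) = S_{\mathrm{FD}}(U) - S_{\mathrm{FD}}(V) - \langle \nabla S_{\mathrm{FD}}(V), U - V\rangle$ into $D(Y\Vert \tilde X) - D(Y\Vert X') - D(X'\Vert \tilde X)$ causes the $S_{\mathrm{FD}}(Y)$ and $S_{\mathrm{FD}}(\tilde X)$ terms to cancel, yielding
\[
D(Y\Vert \tilde X) \;=\; D(Y\Vert X') + D(X'\Vert \tilde X) + \bigl\langle \nabla S_{\mathrm{FD}}(X') - \nabla S_{\mathrm{FD}}(\tilde X),\, Y - X'\bigr\rangle.
\]
For this identity to make sense we need $\nabla S_{\mathrm{FD}}(X')$ to be finite, i.e., $X'$ to lie in the relative interior of $\mathcal{X}$. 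This is automatic from the barrier property of $S_{\mathrm{FD}}$: the mirror map $\nabla S_{\mathrm{FD}}$ blows up at the boundary $\partial\mathcal{X}$, so the minimizer $X'$ of $\Phi + D(\cdot\Vert \tilde X)$ cannot be attained on the boundary as long as $\Phi$ is finite on $\mathcal{X}$.

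Next I would use the first-order optimality condition. Since $X'$ minimizes the convex function $\Phi(X) + D(X\Vert \tilde X)$ over the convex set $\mathcal{X}$ and lies in its interior, there exists a subgradient $g \in \partial\Phi(X')$ such that
\[
\bigl\langle g + \nabla S_{\mathrm{FD}}(X') - \nabla S_{\mathrm{FD}}(\tilde X),\, Y - X'\bigr\rangle \;\geq\; 0 \quad \text{for all } Y \in \mathcal{X}.
\]
Combining this with the convexity inequality $\Phi(Y) \geq \Phi(X') + \langle g, Y - X'\rangle$ and the three-point identity above, one checks that
\[
\Phi(Y) + D(Y\Vert \tilde X) - \Phi(X') - D(X'\Vert \tilde X) - D(Y\Vert X') \;\geq\; \bigl\langle g + \nabla S_{\mathrm{FD}}(X') - \nabla S_{\mathrm{FD}}(\tilde X),\, Y - X'\bigr\rangle \;\geq\; 0,
\]
which is the desired inequality.

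The proof is not computationally deep; the only genuine subtlety is confirming that $X'$ lies in the interior of $\mathcal{X}$ so that $\nabla S_{\mathrm{FD}}(X')$ exists, and correctly handling the subgradient of $\Phi$ (which is assumed only convex, not differentiable). Both points are standard consequences of the barrier structure of the Fermi-Dirac entropy, after which the three ingredients snap together cleanly with no further estimates.
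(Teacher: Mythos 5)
The paper does not give its own proof of this lemma; it is quoted directly from Lemma 2 of \cite{chen1993convergence}, so there is no in-text argument to compare against. Your proposed proof is the standard and correct argument: the three-point identity you derive is verified by direct substitution of the definition of $D$ (the $S_{\mathrm{FD}}(Y)$ and $S_{\mathrm{FD}}(\tilde X)$ terms indeed cancel and the cross terms collapse to $\langle \nabla S_{\mathrm{FD}}(X') - \nabla S_{\mathrm{FD}}(\tilde X),\, Y - X'\rangle$), and combining it with the variational inequality form of first-order optimality and the subgradient inequality for $\Phi$ yields the claim exactly as you describe. Your remark about $X'$ lying in the relative interior of $\mathcal{X}$ is the only place that deserves a touch more care; it is correct here because $\nabla S_{\mathrm{FD}}$ blows up at $\partial\mathcal{X}$ and $\Phi$ is finite on $\mathcal{X}$, but note that in the reference's more general setting one would either assume this directly or phrase the optimality condition using the subdifferential of $D(\cdot \Vert \tilde X)$, which may be empty on the boundary. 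In short, the proof is correct and is the argument the cited reference gives.
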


Now we state and prove our convergence theorem.
\begin{thm}
\label{thm:convergence} For any $\delta\in(0,1]$, define $c_{T,m,\delta}:=2\left(1+4\log(2Tm/\delta)\right).$
Consider algorithm defined by (\ref{eq:update2}) with step size $\eta_{t}=\eta:=\frac{1}{c_{T,m,\delta}\,c_{\mathrm{h}}\,\sqrt{T}}$,
or equivalently by (\ref{eq:update}) with $\gamma_{t}:=\frac{\eta\beta}{\eta+\beta}$,
and initial condition $X_{0}=\mathbf{I}_{n}/2$. Then 
\[
\frac{1}{T}\sum_{t=0}^{T-1}\frac{F_{\beta}(X_{t})-\mu\Tr[X_{t}]}{n}\leq\frac{F_{\beta}(X_{\star})-\mu\Tr[X_{\star}]}{n}+\frac{c_{T,m,\delta}\,c_{\mathrm{h}}}{\sqrt{T}}\left(\log2+\frac{1}{4}+\frac{1}{\sqrt{n}}\right)+\frac{\Vert C-\mu\mathbf{I}_{n}\Vert}{T}
\]
 holds with probability at least $1-\delta$.
\end{thm}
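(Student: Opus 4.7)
The plan is to carry out a stochastic mirror descent analysis adapted to the composite structure of (\ref{eq:update2}), with two twists tailored to our setting: a telescoping trick to isolate the deterministic linear term $C - \mu\mathbf{I}_n$, and a martingale concentration argument to handle the stochastic error $\Delta_t$.

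First, I would start from the proximal characterization (\ref{eq:update2}) and invoke Lemma \ref{lem:pythagoras} with $Y = X_\star$ to obtain the three-point Bregman inequality
\[
  D(X_{t+1}\Vert X_t) + \eta\langle G_t - \mu\mathbf{I}_n,\, X_{t+1} - X_\star\rangle + \tfrac{\eta}{\beta}[S_{\mathrm{FD}}(X_{t+1}) - S_{\mathrm{FD}}(X_\star)] \leq D(X_\star\Vert X_t) - D(X_\star\Vert X_{t+1}).
\]
I then split $X_{t+1} - X_\star = (X_t - X_\star) + (X_{t+1} - X_t)$. For the first piece, convexity of $E - \mu\Tr[\,\cdot\,]$ together with the definition $\Delta_t = G_t - \nabla E(X_t)$ converts $\eta\langle G_t - \mu\mathbf{I}_n, X_t - X_\star\rangle$ into an objective-gap term plus a martingale difference $\eta\langle \Delta_t, X_t - X_\star\rangle$. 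For the second piece, the critical move is to decompose $G_t - \mu\mathbf{I}_n = (C - \mu\mathbf{I}_n) + \tilde G_t$: the deterministic part $\sum_t\langle C - \mu\mathbf{I}_n, X_{t+1} - X_t\rangle$ telescopes to $\langle C - \mu\mathbf{I}_n, X_T - X_0\rangle$, bounded by $2n\Vert C - \mu\mathbf{I}_n\Vert$ via operator/nuclear duality, and this produces exactly the $\Vert C - \mu\mathbf{I}_n\Vert/T$ boundary term in the theorem. The random part is bounded by Cauchy--Schwarz and AM--GM, absorbing $\Vert X_{t+1} - X_t\Vert_*^2$ into $D(X_{t+1}\Vert X_t)$ through the $(2/n)$-strong convexity of $S_{\mathrm{FD}}$ (Lemma \ref{lem:stronglyconvex}) and leaving the familiar $\tfrac{n\eta^2}{4}\Vert \tilde G_t\Vert^2$ contribution.

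Summing over $t=0,\ldots,T-1$, Lemma \ref{lem:divbound} gives $D(X_\star\Vert X_0) \leq n\log 2$, while the index shift $\sum_t[S_{\mathrm{FD}}(X_{t+1}) - S_{\mathrm{FD}}(X_\star)] = \sum_t[S_{\mathrm{FD}}(X_t) - S_{\mathrm{FD}}(X_\star)] + S_{\mathrm{FD}}(X_T) - S_{\mathrm{FD}}(X_0)$ leaves on the left the desired sum of objective gaps plus a boundary term controlled by the elementary bound $|S_{\mathrm{FD}}| \leq n\log 2$ on $\mathcal{X}$. The stochastic quantities are then tamed on a high-probability event. Lemma \ref{lem:Gestbound}, invoked with tolerance $\delta/2$, gives $\max_t\Vert \tilde G_t\Vert \leq c_{T,m,\delta}\,c_{\mathrm{h}}$ with probability at least $1-\delta/2$, so $\sum_t\Vert \tilde G_t\Vert^2 \leq T\,c_{T,m,\delta}^2 c_{\mathrm{h}}^2$. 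For the martingale term, Lemma \ref{lem:mdsprep} applied with the $\mathcal{F}_{t-1}$-measurable direction $X_t - X_\star$ (which satisfies $\Vert X_t - X_\star\Vert \leq 1$) yields sub-exponential parameters $(2c_{\mathrm{h}}\Vert X_t\Vert_{\mathrm{F}},\, 4c_{\mathrm{h}})$ conditioned on $\mathcal{F}_{t-1}$; since $\Vert X_t\Vert_{\mathrm{F}} \leq \sqrt n$ on $\mathcal{X}$, Theorem \ref{thm:mds} lifts this to a \emph{deterministic}-parameter sub-exponential bound $(2c_{\mathrm{h}}\sqrt{Tn},\, 4c_{\mathrm{h}})$ for the full sum, and Proposition \ref{prop:subexp} delivers $|\sum_t\langle \Delta_t, X_t - X_\star\rangle| \lesssim c_{\mathrm{h}}\sqrt{Tn\log(1/\delta)}$ on a second event of probability at least $1-\delta/2$. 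A union bound, the substitution $\eta = 1/(c_{T,m,\delta}\,c_{\mathrm{h}}\sqrt T)$, and division by $\eta T n$ then match the four contributions to the terms $\log 2$, $1/4$, and $1/\sqrt n$ inside the prefactor $c_{T,m,\delta}\,c_{\mathrm{h}}/\sqrt T$, together with the $\Vert C - \mu\mathbf{I}_n\Vert/T$ term.

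The hard part is the martingale concentration step: because the ``test direction'' $X_t - X_\star$ is random but only $\mathcal{F}_{t-1}$-measurable, the per-step sub-exponential parameters depend on $\Vert X_t\Vert_{\mathrm{F}}$, and obtaining the $1/\sqrt n$ (rather than the trivial $\sqrt n$) scaling in the final bound requires genuinely exploiting the pointwise estimate $\Vert X_t\Vert_{\mathrm{F}} \leq \sqrt n$ inside the sub-exponential parameters of Lemma \ref{lem:mdsprep} and threading it through Theorem \ref{thm:mds} to obtain the right deterministic parameters for the sum before applying the tail bound.
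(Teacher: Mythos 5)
Your proposal reproduces the paper's proof essentially step for step: the three-point inequality from Lemma \ref{lem:pythagoras}, the split $G_t = (C-\mu\mathbf{I}_n) + \tilde G_t$ with the deterministic part telescoping, the AM--GM absorption of $\Vert X_{t+1}-X_t\Vert_*^2$ into $D(X_{t+1}\Vert X_t)$ via Lemma \ref{lem:stronglyconvex}, the high-probability bound on $\sum_t\Vert\tilde G_t\Vert^2$ from Lemma \ref{lem:Gestbound}, and the martingale concentration via Lemma \ref{lem:mdsprep} followed by Theorem \ref{thm:mds}, all combined by a union bound with the substitution $\eta = 1/(c_{T,m,\delta}c_{\mathrm{h}}\sqrt T)$.

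A few constants in your sketch are loose relative to what is needed to recover the theorem exactly. You bound $\langle C-\mu\mathbf{I}_n, X_T - X_0\rangle$ by $2n\Vert C-\mu\mathbf{I}_n\Vert$, but the stated coefficient $\Vert C-\mu\mathbf{I}_n\Vert/T$ requires the sharper observation that $-\mathbf{I}_n \preceq X_0 - X_T \preceq \mathbf{I}_n$, hence $\Vert X_0 - X_T\Vert_* \leq n$. For the entropy boundary term, invoking $|S_{\mathrm{FD}}|\leq n\log 2$ is unnecessary and would introduce an extra $O(\eta n\log 2/\beta)$; the paper simply uses that $X_0 = \mathbf{I}_n/2$ minimizes $S_{\mathrm{FD}}$, so $S_{\mathrm{FD}}(X_0)-S_{\mathrm{FD}}(X_T) \leq 0$ and the term can be dropped. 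Finally, your claimed $\sqrt{\log(1/\delta)}$ tail for the martingale sum is the sub-Gaussian-regime bound from Proposition \ref{prop:subexp}; the paper instead loosens the parameters to $(\nu, 2\nu)$ and applies Corollary \ref{cor:subexp}, obtaining a tail linear in $\log(1/\delta)$, which is what combines cleanly with $c_{T,m,\delta}$ to yield the exact constants in the theorem. None of these affect the validity of the argument, only the bookkeeping.
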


\begin{rem}
In particular it follows that there exists $t\in\{0,1,\ldots,T-1\}$
such that 
\[
\frac{F_{\beta}(X_{t})-\mu\Tr[X_{t}]}{n}\leq\frac{F_{\beta}(X_{\star})-\mu\Tr[X_{\star}]}{n}+O\left(\frac{c_{\mathrm{h}}\,\log(Tm/\delta)}{\sqrt{T}}\right)+\frac{\Vert C-\mu\mathbf{I}_{n}\Vert}{T}.
\]
 It also follows from the theorem, by the convexity of $F_{\beta}$,
that 
\[
\frac{F_{\beta}(\overline{X}_{T})-\mu\Tr[\overline{X}_{T}]}{n}\leq\frac{F_{\beta}(X_{\star})-\mu\Tr[X_{\star}]}{n}+O\left(\frac{c_{\mathrm{h}}\,\log(Tm/\delta)}{\sqrt{T}}\right)+\frac{\Vert C-\mu\mathbf{I}_{n}\Vert}{T},
\]
 where $\overline{X}_{T}:=\frac{1}{T}\sum_{t=0}^{T-1}X_{t}$ is the
Cesàro sum.
\end{rem}

\begin{rem}
It is sensible to expect that the error bound for the objective should
scale with $n$, since the optimal value itself should scale at least
size-extensively as $\Omega(n)$. In fact, it may even scale faster,
due to the heavy tails of the electron-electron Coulomb interaction,
as we shall discuss below in Section \ref{subsec:Electron-repulsion-integrals}.
We can view $\frac{1}{n}F_{\beta}$ as the \textbf{\emph{free energy
density}} per basis function.
\end{rem}

\begin{rem}
Observe that the dependence of the error on $\Vert C-\mu\mathbf{I}_{n}\Vert$
(which is modulated by $1/T$) is milder than the dependence on $c_{\mathrm{h}}$
(which is modulated by $1/\sqrt{T}$). This is fortunate because in
our intended applications, $C$ involves the Galerkin projection of
a Laplacian operator, which may have a significantly larger norm than
$c_{\mathrm{h}}$ (which arises from the discretization of an integral
operator). In practice, however, quantum chemistry basis sets are
not so large that $\Vert C\Vert$ should be considered unbounded.
\end{rem}

\begin{proof}
For simplicity, without loss of generality we can set $\mu=0$ by
absorbing $\mu\,\Tr[X]$ into the definition of $E(X)$ via the replacement
$C\leftarrow C-\mu\mathbf{I}_{n}$.

Recall $G_{t}=C+\tilde{G}_{t}$ denotes our gradient estimator (\ref{eq:Gt}),
where $\tilde{G}_{t}$ is defined by (\ref{eq:tildeGt}). Also recall
that $\Delta_{t}=G_{t}-\nabla E(X_{t})$ denotes the error of our
gradient estimator (defined earlier in (\ref{eq:Delta})), and that
$\E\left[\Delta_{t}\,\vert\,\mathcal{F}_{t-1}\right]=0$.

Within Lemma \ref{lem:pythagoras}, consider the choice 
\[
\Phi(X)=\eta\left[E(X_{t})+\left\langle G_{t},X-X_{t}\right\rangle +\frac{1}{\beta}S_{\mathrm{FD}}(X)\right],
\]
 for $\Phi$. Also consider $X_{t}$ in the place of $\tilde{X}$,
as well as $X_{\star}$ in the place of $Y$. Our proximal interpretation
of the update (\ref{eq:update2}) indicates that moreover we can take
$X_{t+1}$ in the place of $X'$. Then we deduce 
\begin{align*}
 & \eta\left[E(X_{t})+\left\langle G_{t},X_{\star}-X_{t}\right\rangle +\frac{1}{\beta}S_{\mathrm{FD}}(X_{\star})\right]+D(X_{\star}\Vert X_{t})\\
 & \quad\quad\geq\ \eta\left[E(X_{t})+\left\langle G_{t},X_{t+1}-X_{t}\right\rangle +\frac{1}{\beta}S_{\mathrm{FD}}(X_{t+1})\right]+D(X_{t+1}\Vert X_{t})+D(X_{\star}\Vert X_{t+1}).
\end{align*}
 We can then cancel and rearrange terms to determine that 
\begin{equation}
D(X_{\star}\Vert X_{t+1})\leq D(X_{\star}\Vert X_{t})+\eta\left\langle G_{t},X_{\star}-X_{t+1}\right\rangle +\frac{\eta}{\beta}\left[S_{\mathrm{FD}}(X_{\star})-S_{\mathrm{FD}}(X_{t+1})\right]-D(X_{t+1}\Vert X_{t}).\label{eq:in1}
\end{equation}
Now the strong convexity of $S_{\mathrm{FD}}$ (Lemma \ref{lem:stronglyconvex})
implies that 
\begin{equation}
D(X_{t+1}\Vert X_{t})\geq\frac{1}{n}\Vert X_{t+1}-X_{t}\Vert_{*}^{2},\label{eq:in2}
\end{equation}
 and we can also compute 
\begin{align}
\left\langle G_{t},X_{\star}-X_{t+1}\right\rangle = & \left\langle G_{t},X_{\star}-X_{t}\right\rangle +\left\langle G_{t},X_{t}-X_{t+1}\right\rangle \nonumber \\
= & \left\langle \nabla E(X_{t}),X_{\star}-X_{t}\right\rangle +\left\langle \Delta_{t},X_{\star}-X_{t}\right\rangle +\left\langle G_{t},X_{t}-X_{t+1}\right\rangle \nonumber \\
= & \left\langle \nabla E(X_{t}),X_{\star}-X_{t}\right\rangle +\left\langle \Delta_{t},X_{\star}-X_{t}\right\rangle +\left\langle C,X_{t}-X_{t+1}\right\rangle +\left\langle \tilde{G}_{t},X_{t}-X_{t+1}\right\rangle \nonumber \\
\le & \left[E(X_{\star})-E(X_{t})\right]+\left\langle \Delta_{t},X_{\star}-X_{t}\right\rangle +\left\langle C,X_{t}-X_{t+1}\right\rangle +\Vert\tilde{G}_{t}\Vert\,\Vert X_{t+1}-X_{t}\Vert_{*},\label{eq:in3}
\end{align}
 where in the last line we have used the convexity of $E$ and the
fact that the spectral and nuclear norms are duals.

Then by combining (\ref{eq:in1}), (\ref{eq:in2}), and (\ref{eq:in3}),
we obtain: 
\begin{align}
D(X_{\star}\Vert X_{t+1}) & \leq D(X_{\star}\Vert X_{t})+\eta\left[E(X_{\star})-E(X_{t})\right]+\eta\left\langle \Delta_{t},X_{\star}-X_{t}\right\rangle +\frac{\eta}{\beta}\left[S_{\mathrm{FD}}(X_{\star})-S_{\mathrm{FD}}(X_{t+1})\right]\nonumber \\
 & \quad\quad+\ \eta\left\langle C,X_{t}-X_{t+1}\right\rangle +\eta\Vert\tilde{G}_{t}\Vert\,\Vert X_{t+1}-X_{t}\Vert_{*}-\frac{1}{n}\Vert X_{t+1}-X_{t}\Vert_{*}^{2}.\label{eq:in4}
\end{align}
 Now we can use the general inequality $ab\leq\frac{1}{2}a^{2}+\frac{1}{2}b^{2}$
to bound the first expression in the last line: 
\begin{equation}
\eta\Vert\tilde{G}_{t}\Vert\,\Vert X_{t+1}-X_{t}\Vert_{*}\leq\frac{n}{4}\eta^{2}\Vert\tilde{G}_{t}\Vert^{2}+\frac{1}{n}\Vert X_{t+1}-X_{t}\Vert_{*}^{2}.\label{eq:in5}
\end{equation}
 Then combining (\ref{eq:in4}) and (\ref{eq:in5}) and defining $d_{t}:=D(X_{\star}\Vert X_{t})$
for all $t$, we find that 
\begin{align*}
d_{t+1}-d_{t} & \leq\eta\left[E(X_{\star})-E(X_{t})\right]+\eta\left\langle \Delta_{t},X_{\star}-X_{t}\right\rangle +\frac{\eta}{\beta}\left[S_{\mathrm{FD}}(X_{\star})-S_{\mathrm{FD}}(X_{t+1})\right]\\
 & \quad\quad+\ \eta\left\langle C,X_{t}-X_{t+1}\right\rangle +\frac{n}{4}\eta^{2}\Vert\tilde{G}_{t}\Vert^{2}.
\end{align*}
Now recall that $F_{\beta}=E+\beta^{-1}S_{\mathrm{FD}}$, so we can
group terms to obtain 
\begin{align*}
d_{t+1}-d_{t} & \leq\eta\left[F_{\beta}(X_{\star})-F_{\beta}(X_{t})\right]+\eta\left\langle \Delta_{t},X_{\star}-X_{t}\right\rangle +\frac{\eta}{\beta}\left[S_{\mathrm{FD}}(X_{t})-S_{\mathrm{FD}}(X_{t+1})\right]\\
 & \quad\quad+\ \eta\left\langle C,X_{t}-X_{t+1}\right\rangle +\frac{n}{4}\eta^{2}\Vert\tilde{G}_{t}\Vert^{2}
\end{align*}
 and, after some further rearrangement to isolate the terms involving
$F_{\beta}$: 
\begin{align*}
\left[F_{\beta}(X_{t})-F_{\beta}(X_{\star})\right] & \leq\frac{1}{\eta}\left[d_{t}-d_{t+1}\right]+\frac{n}{4}\eta\Vert\tilde{G}_{t}\Vert^{2}+\left\langle \Delta_{t},X_{\star}-X_{t}\right\rangle \\
 & \quad\quad+\ \left\langle C,X_{t}-X_{t+1}\right\rangle +\frac{1}{\beta}\left[S_{\mathrm{FD}}(X_{t})-S_{\mathrm{FD}}(X_{t+1})\right].
\end{align*}
 Finally we can sum both sides over $t=0,\ldots,T-1$, taking advantage
of telescoping, to deduce: 
\begin{align}
\sum_{t=0}^{T-1}\left[F_{\beta}(X_{t})-F_{\beta}(X_{\star})\right] & \leq\frac{1}{\eta}\left[d_{0}-d_{T}\right]+\frac{n}{4}\eta\sum_{t=0}^{T-1}\Vert\tilde{G}_{t}\Vert^{2}+\sum_{t=0}^{T-1}\left\langle \Delta_{t},X_{\star}-X_{t}\right\rangle \nonumber \\
 & \quad\quad+\ \left\langle C,X_{0}-X_{T}\right\rangle +\frac{1}{\beta}\left[S_{\mathrm{FD}}(X_{0})-S_{\mathrm{FD}}(X_{T})\right].\label{eq:reflater}
\end{align}
 Note that our initialization $X_{0}=\mathbf{I}_{n}/2$ was chosen
to minimize $S_{\mathrm{FD}}$, the last expression on the right-hand
side is $\leq0$. Moreover $d_{T}\geq0$, and $\left\langle C,X_{0}-X_{T}\right\rangle \leq\Vert C\Vert\,\Vert X_{0}-X_{T}\Vert_{*}\leq n\Vert C\Vert$
(since $-\mathbf{I}_{n}\preceq X_{0}-X_{T}\preceq\mathbf{I}_{n}$),
so we obtain a simpler inequality: 
\begin{equation}
\sum_{t=0}^{T-1}\left[F_{\beta}(X_{t})-F_{\beta}(X_{\star})\right]\leq\frac{d_{0}}{\eta}+\frac{n}{4}\eta\sum_{t=0}^{T-1}\Vert\tilde{G}_{t}\Vert^{2}+\sum_{t=0}^{T-1}\left\langle \Delta_{t},X_{\star}-X_{t}\right\rangle +n\Vert C\Vert.\label{eq:in6}
\end{equation}
 Now by Lemma \ref{lem:Gestbound}, 
\begin{equation}
\sum_{t=0}^{T-1}\Vert\tilde{G}_{t}\Vert^{2}\leq c_{T,m,\delta}^{2}\,c_{\mathrm{h}}^{2}\,T\label{eq:in7}
\end{equation}
 with probability at least $1-\delta/2$, so finally we turn toward
the control of the random variable $\sum_{t=0}^{T-1}y_{t}$, where
$y_{t}:=\left\langle \Delta_{t},X_{\star}-X_{t}\right\rangle $.

Evidently$\{y_{t}\}_{t=0}^{T-1}$ is adapted to the filtration $\{\mathcal{F}_{t}\}_{t=0}^{T-1}$,
and moreover, since $X_{t}$ is measurable with respect to $\mathcal{F}_{t-1}$,
we also have $\E\left[y_{t}\,\vert\,\mathcal{F}_{t-1}\right]=0.$
Therefore $\{y_{t}\}$ is a martingale difference sequence in the
sense of Definition \ref{def:mds}. Also, by Lemma \ref{lem:mdsprep},
$y_{t}$ is sub-exponential with parameters $(2\sqrt{n}c_{\mathrm{h}},4c_{\mathrm{h}})$,
conditioned on $\mathcal{F}_{t-1}$. (We are using the immediate fact
that $\Vert X_{t}\Vert_{\mathrm{F}}\leq\sqrt{n}$, since $X_{t}\in\mathcal{X}$.)
Then by Theorem \ref{thm:mds}, we have that $\sum_{t=0}^{T-1}y_{t}$
is sub-exponential with parameters $\left(2\sqrt{Tn}\,c_{\mathrm{h}},4c_{\mathrm{h}}\right)$.
We can always loosen these parameters a bit to say that $\sum_{t=0}^{T-1}y_{t}$
is sub-exponential with parameters $\left(2\sqrt{Tn}\,c_{\mathrm{h}},4\sqrt{Tn}c_{\mathrm{h}}\right)$
and then apply Corollary \ref{cor:subexp} to deduce that 
\begin{equation}
\sum_{t=0}^{T-1}y_{t}\leq2\left(\frac{1}{2}+4\log(2/\delta)\right)\sqrt{Tn}\,c_{\mathrm{h}}\leq c_{T,m,\delta}c_{\mathrm{h}}\sqrt{Tn}\label{eq:in8}
\end{equation}
 with probability at least $1-\delta/2$.

Combining (\ref{eq:in6}), (\ref{eq:in7}), and (\ref{eq:in8}), as
well as Lemma \ref{lem:divbound}, which says that $d_{0}\leq n\log2$,
we see that 
\[
\sum_{t=0}^{T-1}\left[F_{\beta}(X_{t})-F_{\beta}(X_{\star})\right]\leq\frac{n\log2}{\eta}+\frac{n}{4}\eta\,c_{T,m,\delta}^{2}\,c_{\mathrm{h}}^{2}\,T+c_{T,m,\delta}c_{\mathrm{h}}\sqrt{Tn}+n\Vert C\Vert
\]
 with probability at least $1-\delta$. By taking $\eta=\frac{1}{c_{T,m,\delta}\,c_{\mathrm{h}}\,\sqrt{T}}$,
we complete the proof.
\end{proof}

\section{Structure of the energy \label{sec:Structure-of-the-energy}}

Now we elucidate the composition of the energy function $E(X)$ introduced
above in Section \ref{sec:Preliminaries} and explain how to understand
the quantities $\Vert C-\mu\mathbf{I}\Vert$ and $c_{\mathrm{h}}=c_{\Psi}\Vert V\Vert_{\infty}$
appearing in our convergence analysis in terms of the underlying physical
problem. Our analysis does not rely on the presentation outlined in
this section, which is included only for physical motivation and context.
We include a discussion of general stochastic DFT beyond the Hartree
approximation in Appendix \ref{app:dft}, where we also offer some
commentary on the broader implications of our analysis.

\subsection{Single-electron term \label{subsec:Single-electron-term}}

$C$ is the matrix of the single-particle part of the quantum chemistry
Hamiltonian in the $\{\psi_{i}\}$ basis: 
\[
C_{ij}=\int\psi_{i}(x)\left[-\frac{1}{2}\Delta+v_{\mathrm{ext}}\right]\psi_{j}(x)\,dx,
\]
 where $v_{\mathrm{ext}}$ denotes the diagonal external potential.
Typically the chemical potential should be chosen to lie within the
spectrum of $C$. In this case $\Vert C-\mu\mathbf{I}\Vert\leq\Vert C\Vert$.
It is common to assume some bound on $\Vert C\Vert$ for many calculations
within a quantum chemistry basis. We defer discussion of the complete
basis set limit (in which $\Vert C\Vert$ is unbounded) to Section
\ref{sec:cbl}.

\subsection{Electron repulsion integrals \label{subsec:Electron-repulsion-integrals}}

Recall from (\ref{eq:ERI0}) that we assume the following structure
for the ERI tensor 
\begin{equation}
v_{ij,kl}=\sum_{p,q=1}^{m}\Psi_{pi}\Psi_{pj}\,V_{pq}\,\Psi_{qk}\Psi_{ql},\label{eq:thc1}
\end{equation}
 where $V$ is positive semidefinite and $\Psi$ has orthonormal columns.
This tensor should satisfy \cite{szabo1996modern} 
\begin{equation}
v_{ij,kl}\approx\int\psi_{i}(x)\psi_{j}(x)\,v_{\mathrm{ee}}(x-y)\,\psi_{k}(y)\psi_{l}(y)\,dx\,dy,\label{eq:eri}
\end{equation}
 where $v_{\mathrm{ee}}$ is the potential of the electron-electron
interaction which defines a positive semidefinite kernel $v_{\mathrm{ee}}(x-y)$.
In electronic structure we typically have the Coulomb potential $v_{\mathrm{ee}}(z)=\frac{1}{\vert z\vert}$,
but it is also interesting to consider for example the case of the
Yukawa potential $v_{\mathrm{ee}}(z)=\frac{e^{-\alpha\vert z\vert}}{\vert z\vert}$
modeling screened interactions.

To understand how (\ref{eq:thc1}), which can be viewed as a tensor
hypercontraction (THC) format \cite{10.1063/1.4732310,10.1063/1.4768233,10.1063/1.4768241}
for the ERI, is related to (\ref{eq:eri}), let $\{\phi_{p}\}_{p=1}^{m}$
denote a collection of interpolating functions associated to a collection
interpolation points $\{x_{p}\}_{p=1}^{m}$ satisfying 
\[
w_{p}:=\int\phi_{p}(x)\,dx>0
\]
 and 
\begin{equation}
g(x)\approx\sum_{p=1}^{m}g(x_{p})\,\phi_{p}(x)\label{eq:interp}
\end{equation}
 for functions $g$ that are sufficiently smooth on a suitable region
of support. Since quantum chemistry basis functions are typically
smooth and effectively compactly supported (with exponentially decaying
tails), it is reasonable to assume that $m$ can be taken to be proportional
to the volume of the computational domain, with $\{x_{p}\}_{p=1}^{m}$
taken to be equispaced grid points and $\{\phi_{p}\}_{p=1}^{m}$ chosen
to be an interpolating Meyer wavelet basis \cite{Daubechies_1992}
or a more compactly supported alternative such as a gausslet basis
\cite{10.1063/1.5007066}, while maintaining that (\ref{eq:interp})
holds with high accuracy for all choices $g=\psi_{i}\psi_{j}$. (The
accuracy can be reasonably assumed to be spectral with respect to
$m$.)

In turn it follows that the approximation
\[
\psi_{i}(x)\psi_{j}(x)\approx\sum_{p=1}^{m}\psi_{i}(x_{p})\,\psi_{j}(x_{p})\,\phi_{p}(x)
\]
 can be substituted into the right-hand side of (\ref{eq:eri}) to
justify the approximation, where we define $V$ and $\Psi$ in (\ref{eq:ERI0})
by 
\[
V_{pq}=\frac{1}{w_{p}w_{q}}\int\phi_{p}(x)\,v_{\mathrm{ee}}(x-y)\,\phi_{q}(y)\,dx\,dy
\]
 and 
\begin{equation}
\Psi_{pi}\approx\tilde{\Psi}_{pi}:=\psi_{i}(x_{p})\,\sqrt{w_{p}}.\label{eq:Psi}
\end{equation}
 By construction $V$ is positive semidefinite. Moreover 
\[
[\tilde{\Psi}^{\top}\tilde{\Psi}]_{ij}=\sum_{p}\psi_{i}(x_{p})\psi_{j}(x_{p})\,w_{p}=\int\sum_{p=1}^{m}\psi_{i}(x_{p})\psi_{j}(x_{p})\,\phi(x)\,dx\approx\int\psi_{i}(x)\,\psi_{j}(x)\,dx=\delta_{ij},
\]
 i.e., $\tilde{\Psi}$ almost has orthonormal columns. Therefore we
can take $\Psi$ to be approximately equal to $\tilde{\Psi}$ but
satisfying $\Psi^{\top}\Psi=\mathbf{I}_{n}$ exactly, e.g., by symmetric
orthogonalization $\Psi=\tilde{\Psi}(\tilde{\Psi}^{\top}\tilde{\Psi})^{-1/2}$.

In our analysis, we have taken the perspective that it is simpler
to assume that the format (\ref{eq:thc1}) holds exactly for the ERI
and perform analysis given this format, rather than work with the
right-hand side of (\ref{eq:eri}) directly. The preceding arguments
justify that this format can be reasonably assumed, but a full analysis
of the approximation is orthogonal to our efforts in this work, as
the THC format is widely used in quantum chemistry calculations \cite{10.1063/1.4732310,10.1063/1.4768233,10.1063/1.4768241}.
Moreover, the THC format is important from an implementation point
of view in that it allows for the construction of the gradient estimator
via the fast construction of the discrete electron density $\hat{\rho}_{t}$,
following (\ref{eq:rhohat}). Indeed, in more general stochastic DFT,
it is important to lift the quantum chemistry basis representation
to a grid-based representation in order to implement more general
density functionals \cite{fabianStochasticDensityFunctional2019}.

Now we want to explain how the quantity $c_{\mathrm{h}}=c_{\Psi}\Vert V\Vert_{\infty}$
appearing in our analysis can be understood physically. For simplicity,
assume (as is the case for our choices of interpolating basis described
above) that $w_{p}=w$ is uniform, that $\frac{1}{w}\int\vert\phi_{q}(x)\vert\,dx=O(1)$,
and that $\sum_{q=1}^{m}\vert\phi_{q}(x)\vert=O(1)$ in the thermodynamic
limit. Also assume that all the basis functions $\{\psi_{i}\}$ are
supported on some computational domain $\mathcal{D}$.

Then define the constants: 
\[
c_{\psi}:=\sup_{x}\left\{ \sum_{i=1}^{n}\vert\psi_{i}(x)\vert^{2}\right\} ,\quad c_{\mathrm{ee}}:=\sup_{x\in\mathcal{D}}\int\left|v_{\mathrm{ee}}(x-y)\right|\,dy.
\]
 Under the reasonable assumption that each of our quantum chemistry
basis functions effectively overlaps with only $O(1)$ other basis
functions in the thermodynamic limit, it is reasonable to assume that
$c_{\psi}=O(1)$ in this limit. Moreover, if $v_{\mathrm{ee}}$ has
rapid decay as in the case of the Yukawa potential, then $c_{\mathrm{ee}}=O(1)$
as well.

Meanwhile, for $\Psi$ as constructed in (\ref{eq:Psi}), $c_{\Psi}\lessapprox w\,c_{\psi}$.
Moreover, under our stated assumptions, $\sum_{q=1}^{m}\vert V_{pq}\vert=O\left(c_{\mathrm{ee}}/w\right).$
Therefore $\Vert V\Vert_{\infty}=O\left(c_{\mathrm{ee}}/w\right)$,
and in turn 
\[
c_{\mathrm{h}}=O(c_{\psi}c_{\mathrm{ee}}).
\]
 Under the assumptions outlined above, this upper bound is $O(1)$
in the thermodynamic limit.

However, if $v_{\mathrm{ee}}$ is the Coulomb potential, $c_{\mathrm{ee}}$
will grow as the volume of the computational domain grows. This is
a fair conclusion since the total Hartree energy grows proportionally
as well, so the error bound that we achieve in Theorem \ref{thm:convergence}
is still accurate in relative terms.

\section{Chemical potential optimization \label{sec:Chemical-potential-optimization}}

In this section we explain how an oracle for the unconstrained problem
(\ref{eq:opt}) can be used to solve the problem (\ref{eq:opt_constraint})
in which the optimization variable $X$ satisfies the particle number
constraint $\Tr[X]=N$, where $N\in[0,n]$. It is useful to define
the \textbf{\emph{filling factor}}
\[
\nu=\frac{N}{n}\in(0,1),
\]
\textbf{\emph{ }}i.e., the number of electrons per basis function. 

Let 
\begin{equation}
g_{\beta}(\mu):=\sup_{0\preceq X\preceq\mathbf{I}_{n}}\{\mu\,\Tr[X]-F_{\beta}(X)\}\label{eq:gbeta}
\end{equation}
 denote the negative of the optimal value of the unconstrained problem
(\ref{eq:opt}) as a function of the chemical potential $\mu$.

Then the dual problem for the constrained problem (\ref{eq:opt_constraint})
is precisely to maximize the concave objective 
\begin{equation}
g_{N,\beta}(\mu):=N\mu-g_{\beta}(\mu)\label{eq:gNmu}
\end{equation}
 over $\mu\in\R$.

We will use our oracle for approximately solving the unconstrained
problem (\ref{eq:opt}) as an approximate evaluator for this objective
$g_{N,\beta}$. We want to show that it will suffice to apply this
objective at a finite collection of values $\mu$. For this to be
the case, we want (1) to show that $g_{N,\beta}$ is Lipschitz and
(2) to determine \emph{a priori }bounds on the location of the maximizer.
These tasks are accomplished via the following two lemmas, which are
proved in Appendix \ref{app:chemical} 
\begin{lem}
\label{lem:glip}$g_{N,\beta}:\R\ra\R$ is Lipschitz with Lipschitz
constant $n$.
\end{lem}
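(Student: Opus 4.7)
The plan is to rewrite $g_{N,\beta}$ in a form that makes the bound transparent, avoiding the loose triangle-inequality bound that would give only $2n$. Starting from the definitions (\ref{eq:gbeta}) and (\ref{eq:gNmu}), I would observe that
\[
g_{N,\beta}(\mu) = N\mu - \sup_{X \in \mathcal{X}}\{\mu\,\Tr[X] - F_\beta(X)\} = \inf_{X \in \mathcal{X}}\bigl\{F_\beta(X) + \mu(N - \Tr[X])\bigr\}.
\]
Thus $g_{N,\beta}$ is expressed as a pointwise infimum of the family of affine functions $\mu \mapsto F_\beta(X) + \mu(N - \Tr[X])$ indexed by $X \in \mathcal{X}$.

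Next, for each $X \in \mathcal{X}$, the slope of the corresponding affine function is $N - \Tr[X]$. Because $0 \preceq X \preceq \mathbf{I}_n$, we have $\Tr[X] \in [0,n]$, and by assumption $N \in [0,n]$ as well. Since both $N$ and $\Tr[X]$ lie in $[0,n]$, we get $|N - \Tr[X]| \leq n$. Therefore every affine function in the family is Lipschitz in $\mu$ with constant at most $n$.

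Finally, I would invoke the elementary fact that a pointwise infimum of a family of functions, each with common Lipschitz constant $L$, is itself Lipschitz with constant at most $L$ (on the set where the infimum is finite). Applied to our situation with $L = n$, this gives the desired bound. The only subtlety worth a sentence of verification is that the infimum is attained (or at least finite) for every $\mu \in \R$: since $\mathcal{X}$ is compact and $F_\beta$ is continuous on $\mathcal{X}$ (with suitable interpretation at the boundary when $\beta = +\infty$), the infimum is finite for all $\mu$, so $g_{N,\beta}:\R\to\R$ is well-defined. No step presents a real obstacle; the main trick is simply the rewriting that converts $g_{N,\beta}$ into an infimum rather than treating $N\mu$ and $g_\beta(\mu)$ separately.
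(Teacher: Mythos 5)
Your proof is correct, and the underlying idea is the same as the paper's: in both cases the bound reduces to the observation that $N-\Tr[X]\in[-n,n]$ for $X\in\mathcal X$ and $N\in[0,n]$. The difference is in how that observation is deployed. The paper invokes the subdifferential formula $\partial g_{N,\beta}(\mu)=N-\partial g_\beta(\mu)$ with $\partial g_\beta(\mu)=\{\Tr[X]:X\text{ minimizes }F_\beta(X)-\mu\Tr[X]\}$, i.e.\ it bounds the supergradient of the concave function $g_{N,\beta}$ directly. You instead rewrite $g_{N,\beta}$ as an infimum over $X\in\mathcal X$ of the affine functions $\mu\mapsto F_\beta(X)+\mu(N-\Tr[X])$ and appeal to the elementary fact that an infimum of a uniformly $L$-Lipschitz family is $L$-Lipschitz. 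Your route is slightly more elementary and sidesteps any appeal to subdifferential calculus (including the minor subtlety the paper glosses by saying the result holds ``in any case,'' even when $g_{N,\beta}$ is merely a.e.\ differentiable at $\beta=+\infty$); the paper's version is more compact for a reader comfortable with the envelope/Danskin-type formula. Both are fully rigorous.
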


\begin{lem}
\label{lem:mubound} The maximum of $g_{N,\beta}$ is attained within
the interval 
\[
\left[\lambda_{\min}(C)-c_{\mathrm{h}}-\beta^{-1}\log(\gamma^{-1}),\ \lambda_{\max}(C)+c_{\mathrm{h}}+\beta^{-1}\log([1-\gamma]^{-1})\right].
\]
 This holds even for $\beta=+\infty$, interpreting the terms involving
$\beta$ as zero.
\end{lem}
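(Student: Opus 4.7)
The plan is to view the problem dually: $g_\beta(\mu)$ is convex as the supremum of affine functions of $\mu$, so $g_{N,\beta}(\mu)=N\mu-g_\beta(\mu)$ is concave. For $\beta<+\infty$, strict convexity of $F_\beta$ ensures the inner maximizer $X_\star(\mu)$ in \eqref{eq:gbeta} is unique, and Danskin's theorem gives
\[
g_{N,\beta}'(\mu) \,=\, N-\Tr\bigl[X_\star(\mu)\bigr],
\]
which is non-increasing in $\mu$. It thus suffices to show that this derivative is $\leq 0$ at the right endpoint and $\geq 0$ at the left endpoint of the interval, since by concavity the maximum of $g_{N,\beta}$ must then lie between.

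The first-order optimality condition for \eqref{eq:gbeta} (cf.\ the fixed-point equation \eqref{eq:fixedpoint}) reads
\[
X_\star(\mu) \,=\, f_\beta\bigl(H_\star(\mu)\bigr), \qquad H_\star(\mu) \,:=\, C + \nabla\tilde E(X_\star(\mu)) - \mu\mathbf{I}_n.
\]
By Lemma \ref{lem:Gbound} together with $\Vert X_\star(\mu)\Vert\leq 1$, the spectrum of $H_\star(\mu)$ is sandwiched:
\[
\bigl(\lambda_{\min}(C)-c_{\mathrm{h}}-\mu\bigr)\mathbf{I}_n \,\preceq\, H_\star(\mu) \,\preceq\, \bigl(\lambda_{\max}(C)+c_{\mathrm{h}}-\mu\bigr)\mathbf{I}_n.
\]
At the right endpoint $\mu_{\max}:=\lambda_{\max}(C)+c_{\mathrm{h}}+\beta^{-1}\log((1-\nu)^{-1})$ the upper bound yields $H_\star(\mu_{\max})\preceq-\beta^{-1}\log((1-\nu)^{-1})\mathbf{I}_n$, and since $f_\beta$ is matrix-decreasing,
\[
X_\star(\mu_{\max}) \,\succeq\, f_\beta\bigl(-\beta^{-1}\log((1-\nu)^{-1})\bigr)\,\mathbf{I}_n \,=\, \tfrac{1}{2-\nu}\,\mathbf{I}_n,
\]
so $\Tr[X_\star(\mu_{\max})]\geq n/(2-\nu)\geq n\nu=N$ by the elementary inequality $(1-\nu)^2\geq 0$. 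A fully symmetric argument at $\mu_{\min}:=\lambda_{\min}(C)-c_{\mathrm{h}}-\beta^{-1}\log(\nu^{-1})$ gives $X_\star(\mu_{\min})\preceq\tfrac{\nu}{1+\nu}\mathbf{I}_n$, hence $\Tr[X_\star(\mu_{\min})]\leq n\nu/(1+\nu)\leq N$. The $\beta=+\infty$ case is handled by the same spectral sandwich: outside $[\lambda_{\min}(C)-c_{\mathrm{h}},\,\lambda_{\max}(C)+c_{\mathrm{h}}]$ the effective Hamiltonian is sign-definite, forcing $X_\star\in\{0,\mathbf{I}_n\}$ with $\Tr X_\star\in\{0,n\}$, which brackets $N\in(0,n)$.

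The main technical care is in cleanly invoking the envelope theorem and, in the $\beta=+\infty$ case, accommodating the possible non-uniqueness of the inner maximizer by working with the subdifferential $\partial g_\beta(\mu)=\mathrm{conv}\{\Tr X : X\text{ attains }\eqref{eq:gbeta}\}$ rather than a derivative; however, these are mild bookkeeping matters. The essential content is that the $\beta^{-1}\log$ corrections at the endpoints are calibrated precisely so that the resulting spectral shift produces Fermi-Dirac occupancies $\tfrac{1}{2-\nu}$ and $\tfrac{\nu}{1+\nu}$, which sandwich the target filling $\nu$.
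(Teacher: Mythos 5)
Your proposal is correct and follows essentially the same route as the paper's proof. Both arguments rest on the same pillars: concavity of $g_{N,\beta}$, the supergradient formula $\partial g_{N,\beta}(\mu) = N - \Tr[X_\star(\mu)]$, the self-consistency (fixed-point) equation $X_\star = f_\beta(\nabla E(X_\star)-\mu\mathbf{I}_n)$, and the spectral sandwich $\lambda_{\min}(C)-c_{\mathrm{h}}-\mu \le H_\star \le \lambda_{\max}(C)+c_{\mathrm{h}}-\mu$ obtained from Lemma~\ref{lem:Gbound}. The only difference is cosmetic: the paper processes the endpoints via the exponential bounds $f_\beta(x)\le e^{-\beta x}$ and $f_\beta(x)\ge 1-e^{\beta x}$ (which make the endpoint calibration derivational), whereas you compute the exact Fermi--Dirac occupancies $\tfrac{1}{2-\nu}$ and $\tfrac{\nu}{1+\nu}$ at the endpoints and verify they bracket $\nu$ using $(1-\nu)^2\ge 0$ (which makes it a direct verification). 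Both yield the identical interval, and both handle $\beta=+\infty$ by the same sign-definiteness observation forcing $X_\star\in\{0,\mathbf{I}_n\}$. (Incidentally, the paper writes the filling as $\gamma$ in this lemma while having defined it as $\nu$ just above; your $\nu$ is the intended quantity.)
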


Then for positive integer $M$, let $\mu_{k}$, $k=0,\ldots,K$, denote
equispaced points with 
\[
\mu_{0}=\lambda_{\min}(C)-c_{\mathrm{h}}-\beta^{-1}\log(\gamma^{-1}),\quad\mu_{K}=\lambda_{\max}(C)+c_{\mathrm{h}}+\beta^{-1}\log([1-\gamma]^{-1}).
\]
 We can solve (\ref{eq:opt}) with $\mu=\mu_{k}$, for each $k=0,\ldots,K$
using algorithm (\ref{eq:update}), with step size chosen as in Theorem
\ref{thm:convergence} for a given failure probability $\delta/(K+1)$,
so that by the union bound we are guaranteed that the estimate of
Theorem \ref{thm:convergence} succeeds for all $k$ with probability
at least $1-\delta$. For each $k$, use the Cesàro mean $\overline{X}_{T}$
over $T$ iterations (cf. Theorem \ref{thm:convergence}) to produce
an estimate $\hat{g}_{N,\beta,k}\approx g_{N,\beta}(\mu_{k})$. Then
let $\hat{g}_{N,\beta}:=\max_{k=0,\ldots,K}\hat{g}_{N,\beta,k}$ denote
the maximum estimated value.
\begin{prop}
\label{prop:chemopt}Let $p^{\star}$ denote the optimal value of
(\ref{eq:opt_constraint}). The estimate $\hat{g}_{N,\beta}$ furnished
by the above procedure satisfies 
\[
\frac{\vert\hat{g}_{N,\beta}-p^{\star}\vert}{n}=O\left(\frac{c_{\mathrm{h}}\,\log(KTm/\delta)}{\sqrt{T}}+\left[\frac{1}{T}+\frac{1}{M}\right]\left[\Vert C\Vert+c_{\mathrm{h}}+\beta^{-1}\log\left(\frac{1}{\gamma(1-\gamma)}\right)\right]\right)
\]
 holds with probability at least $1-\delta$. Again the term involving
$\beta$ can be ignored if $\beta=+\infty$.
\end{prop}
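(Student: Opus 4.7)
The plan is to combine three ingredients: (i) strong duality to identify $p^\star$ with $\sup_\mu g_{N,\beta}(\mu)$; (ii) the Lipschitz bound (Lemma \ref{lem:glip}) to control the error from restricting attention to the finite grid $\{\mu_k\}_{k=0}^K$; and (iii) the per-point convergence guarantee of Theorem \ref{thm:convergence}, applied to the Cesàro mean per the remark following it, to control each $|\hat{g}_{N,\beta,k}-g_{N,\beta}(\mu_k)|$.

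First, since (\ref{eq:opt_constraint}) is convex with a strictly feasible interior point (e.g.~$X=\nu\mathbf{I}_n$ for $\nu\in(0,1)$), strong duality holds, so $p^\star=\sup_\mu g_{N,\beta}(\mu)$, and by Lemma \ref{lem:mubound} the supremum is attained at some $\mu^\star\in[\mu_0,\mu_K]$. Write $L:=\mu_K-\mu_0$; then both $L$ and $\max_k\Vert C-\mu_k\mathbf{I}_n\Vert$ are $O(\Vert C\Vert+c_{\mathrm{h}}+\beta^{-1}\log([\gamma(1-\gamma)]^{-1}))$, using $|\mu_k|\leq\max(|\mu_0|,|\mu_K|)$ and the triangle inequality.

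Next, let $k^\star\in\mathrm{argmin}_k|\mu_k-\mu^\star|$, so that $|\mu_{k^\star}-\mu^\star|\leq L/(2M)$. Lemma \ref{lem:glip} then yields
\[
g_{N,\beta}(\mu_{k^\star})\geq g_{N,\beta}(\mu^\star)-nL/(2M)=p^\star-O(nL/M).
\]
For the per-point estimation error, apply Theorem \ref{thm:convergence} at each $\mu_k$ with failure parameter $\delta/(K+1)$, absorbing $-\mu_k\,\Tr[X]$ into the linear part of the energy so that $C$ is replaced by $C-\mu_k\mathbf{I}_n$. Define $\hat{g}_{N,\beta,k}:=N\mu_k-[F_\beta(\overline{X}_T^{(k)})-\mu_k\,\Tr[\overline{X}_T^{(k)}]]$. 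One direction, $\hat{g}_{N,\beta,k}\leq g_{N,\beta}(\mu_k)$, is automatic from the feasibility of the Cesàro mean $\overline{X}_T^{(k)}\in\mathcal{X}$ for the sup in (\ref{eq:gbeta}); the other direction follows from the Cesàro-mean bound noted after Theorem \ref{thm:convergence}. By the union bound, with probability at least $1-\delta$ we have simultaneously for every $k$
\[
|\hat{g}_{N,\beta,k}-g_{N,\beta}(\mu_k)|\leq n\cdot O\!\left(\frac{c_{\mathrm{h}}\log(KTm/\delta)}{\sqrt{T}}\right)+\frac{n\,\Vert C-\mu_k\mathbf{I}_n\Vert}{T}=:n\,\varepsilon_{\mathrm{est}}.
\]

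Finally, assemble via a sandwich: on the above event,
\[
\hat{g}_{N,\beta}\leq\max_k g_{N,\beta}(\mu_k)+n\,\varepsilon_{\mathrm{est}}\leq p^\star+n\,\varepsilon_{\mathrm{est}},
\]
while
\[
\hat{g}_{N,\beta}\geq\hat{g}_{N,\beta,k^\star}\geq g_{N,\beta}(\mu_{k^\star})-n\,\varepsilon_{\mathrm{est}}\geq p^\star-O(nL/M)-n\,\varepsilon_{\mathrm{est}}.
\]
Dividing by $n$ and absorbing $L$ and $\max_k\Vert C-\mu_k\mathbf{I}_n\Vert$ into the common bound $\Vert C\Vert+c_{\mathrm{h}}+\beta^{-1}\log([\gamma(1-\gamma)]^{-1})$ in both the $1/T$ and $1/M$ contributions delivers the stated estimate. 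The $\beta=+\infty$ case is handled by the same argument with the $\beta^{-1}\log$ terms set to zero, as supported by Lemma \ref{lem:mubound}.

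The only real obstacle is careful bookkeeping rather than any deep step: distinguishing the two interval-length contributions (one from the $\Vert C-\mu\mathbf{I}_n\Vert/T$ term inside Theorem \ref{thm:convergence}, one from the Lipschitz discretization on a grid of spacing $L/M$), and confirming no duality gap so that $p^\star=\sup_\mu g_{N,\beta}(\mu)$. Both are standard once the pieces are in place.
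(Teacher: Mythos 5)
Your proposal follows the same route as the paper: apply Theorem~\ref{thm:convergence} at each grid point $\mu_k$ with failure probability $\delta/(K+1)$ and a union bound, use Lemma~\ref{lem:glip} for the Lipschitz control of the discretization error on a grid of spacing $h = O\big((\Vert C\Vert + c_\mathrm{h} + \beta^{-1}\log([\gamma(1-\gamma)]^{-1}))/M\big)$, and use Lemma~\ref{lem:mubound} to locate the optimizer $\mu_\star$ within the grid interval. You add two clarifications the paper leaves implicit: the Slater-type argument (take $X = \nu\mathbf{I}_n$ with $\nu \in (0,1)$) justifying no duality gap, so $p^\star = \sup_\mu g_{N,\beta}(\mu)$; and the explicit sandwich at the end. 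These are genuine additions that make the argument more self-contained, and they are correct.

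One bookkeeping error: your formula $\hat{g}_{N,\beta,k} := N\mu_k - [F_\beta(\overline{X}_T^{(k)}) - \mu_k\Tr[\overline{X}_T^{(k)}]]$ has the wrong sign. Since $g_{N,\beta}(\mu) = N\mu - g_\beta(\mu) = N\mu + \inf_{X\in\mathcal{X}}\{F_\beta(X) - \mu\Tr[X]\}$, the natural plug-in estimator is $\hat{g}_{N,\beta,k} := N\mu_k + \big[F_\beta(\overline{X}_T^{(k)}) - \mu_k\Tr[\overline{X}_T^{(k)}]\big]$. With the correct sign the automatic direction from feasibility is $\hat{g}_{N,\beta,k} \geq g_{N,\beta}(\mu_k)$ (an overestimate), not $\leq$ as you wrote. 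Your sandwich argument only uses the two-sided bound $|\hat{g}_{N,\beta,k} - g_{N,\beta}(\mu_k)| \leq n\,\varepsilon_\mathrm{est}$ supplied by Theorem~\ref{thm:convergence}, so the overall conclusion survives, but the stated direction of the automatic inequality is incorrect as written and should be fixed.
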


The proof is also given in Appendix \ref{app:chemical}.

\section{Complete basis set limit \label{sec:cbl}}

In the complete basis set limit, where we consider an increasing number
of basis functions per unit volume, there are two problems with the
preceding analysis. First, the norm $\Vert C\Vert$ becomes unbounded
due to the fact that the Galerkin projection of the Laplacian operator
becomes unbounded in the complete basis set limit. Second, the size
$n$ of the basis set no longer remains within a fixed proportion
of the number of electrons, and the free energy density per basis
function $F_{\beta}/n$ appearing in Theorem \ref{thm:convergence}
is no longer physically meaningful.

Therefore in the complete basis set limit, we hope to provide an alternative
to Theorem \ref{thm:convergence} which avoids any explicit dependence
on $\Vert C\Vert$ or $n$. This will require some additional assumptions.
Specifically, we must assume some sufficient growth in the eigenvalues
of $C$, so that the high-energy eigenstates that arise from the finer
discretization contribute only negligibly. We must also assume that
the temperature is not arbitrarily high, so that entropic effects
do not bestow nontrivial total occupation upon these high-energy eigenstates.

We note that to eliminate the dependence on $\Vert C\Vert$ and $n$
from our analysis, we must pay the price of some dependence on $\beta$
(which in particular we will assume to be finite). We will see that
this dependence arises from the fact that we take an alternative initial
condition $X_{0}$, for which $D(X_{\star}\Vert X_{0})$ depends on
$\beta$.

Before formalizing the assumptions at which we have gestured above,
we will phrase a convergence result more abstractly in terms of the
quantities $D(X_{\star}\Vert X_{0})$ and $\Tr[X_{0}]$. We will also
make the following simplifying assumption, which eases some of the
notation in a few places.

\begin{assump}

\label{assump:veenn} In this section, assume that $\nabla\tilde{E}(X)\succeq0$
for all $X\in\mathcal{X}$.

\end{assump}

Note that this assumption holds automatically for $\tilde{E}$ induced
by a choice of electron-electron potential $v_{\mathrm{ee}}\geq0$,
such as the Yukawa or Coulomb potential. However, the assumption is
not really necessary, and it can be guaranteed by replacing $C\leftarrow C-c_{\mathrm{h}}\mathbf{I}_{n}$
while adopting the compensating change $\tilde{E}(X)\leftarrow\tilde{E}(X)+c_{\mathrm{h}}\Tr[X]$.

Moreover, note that that Assumption \ref{assump:veenn} implies that
our gradient estimator satisfies 
\begin{equation}
\tilde{G}_{t}\succeq0,\quad t=0,\ldots,T-1.\label{eq:Gtnn}
\end{equation}

\subsection{Initial condition and SCF interpretation \label{subsec:cblinit}}

We do not change the mirror descent update rule (\ref{eq:update}),
but we consider an alternative choice of initial condition, namely
\[
X_{0}=f_{\beta}(C-\mu\mathbf{I}_{n}),
\]
 which contrasts with our previous choice of $X_{0}=f_{\beta}(0)=\mathbf{I}_{n}/2$.
Equivalently, we start with effective Hamiltonian 
\[
H_{0}=C-\mu\mathbf{I}_{n}.
\]

It is instructive to recall (\ref{eq:hamiltonianupdate}), i.e., that
the effective Hamiltonian update can be viewed as a convex combination:
\[
H_{t+1}=(1-\beta^{-1}\gamma_{t})H_{t}+\beta^{-1}\gamma_{t}\left(G_{t}-\mu\mathbf{I}_{n}\right).
\]
 Therefore if we define the `effective potential' 
\[
V_{t}:=H_{t}-(C-\mu\mathbf{I}_{n}),
\]
 we see that equivalently the effective potential satisfies the update
rule 
\begin{equation}
V_{t+1}=(1-\beta^{-1}\gamma_{t})V_{t}+(\beta^{-1}\gamma_{t})\tilde{G}_{t},\label{eq:effVupdate}
\end{equation}
 where we recall that $\tilde{G}_{t}$ denotes our estimator for the
Hartree potential $\nabla\tilde{E}(X_{t})$. Since $H_{0}=C-\mu\mathbf{I}_{n}$,
the corresponding initial condition for this update is 
\[
V_{0}=0.
\]
 The effective Hamiltonian is recovered in terms of $V_{t}$ via 
\[
H_{t}=C-\mu\mathbf{I}_{n}+V_{t}.
\]

Therefore, the mirror descent update with this choice of initial condition
coincides \emph{precisely }with the ordinary self-consistent field
(SCF) iteration \cite{fabianStochasticDensityFunctional2019} for
the finite-temperature Hartree approximation with simple mixing, where
the mixing parameter is $\beta^{-1}\gamma_{t}\in(0,1)$, modulo the
replacement of the Hartree potential with its stochastic estimator.

Note that the following lemma is immediate from (\ref{eq:effVupdate}),
by the same logic as in the proof of Lemma \ref{lem:EffHambound}: 
\begin{lem}
\label{lem:Veffbound} $0\preceq V_{t}\preceq\tilde{g}_{\max}\,\mathbf{I}_{n}$
for all $t=0,\ldots,T-1$.
\end{lem}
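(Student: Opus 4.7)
The plan is to prove both inequalities by induction on $t$, exploiting the fact that (\ref{eq:effVupdate}) expresses $V_{t+1}$ as a conic combination (in fact, since $\gamma_t \in (0,\beta]$, a convex combination) of $V_t$ and $\tilde{G}_t$, both of which are positive semidefinite with a uniform spectral bound.

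For the lower bound, I would note $V_0 = 0 \succeq 0$, and that $\tilde{G}_t \succeq 0$ by (\ref{eq:Gtnn}) (itself a consequence of Assumption \ref{assump:veenn}). Since the coefficients $1 - \beta^{-1}\gamma_t$ and $\beta^{-1}\gamma_t$ both lie in $[0,1]$ when $\gamma_t \in (0,\beta]$, the update (\ref{eq:effVupdate}) preserves positive semidefiniteness, and induction gives $V_t \succeq 0$ for all $t$.

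For the upper bound, observe that $\tilde{G}_t \preceq \|\tilde{G}_t\|\,\mathbf{I}_n \preceq \tilde{g}_{\max}\,\mathbf{I}_n$ by Definition \ref{def:gmax}. If inductively $V_t \preceq \tilde{g}_{\max}\,\mathbf{I}_n$, then
\[
V_{t+1} \preceq (1 - \beta^{-1}\gamma_t)\tilde{g}_{\max}\,\mathbf{I}_n + (\beta^{-1}\gamma_t)\tilde{g}_{\max}\,\mathbf{I}_n = \tilde{g}_{\max}\,\mathbf{I}_n,
\]
closing the induction. The base case $V_0 = 0 \preceq \tilde{g}_{\max}\,\mathbf{I}_n$ is trivial since $\tilde{g}_{\max} \geq 0$.

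There is essentially no obstacle here; the lemma is a direct consequence of the convex-combination structure of the update rule together with the nonnegativity of the gradient estimator. The only point requiring care is verifying that $\beta^{-1}\gamma_t \in [0,1]$, which follows from the standing convention (stated in Section \ref{subsec:Algorithm-overview}) that $\gamma_t \leq \beta$.
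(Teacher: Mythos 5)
Your proof is correct and is essentially the same as the paper's, which simply cites induction on the convex-combination update (\ref{eq:effVupdate}) together with (\ref{eq:Gtnn}); you have merely written out the induction and the use of $\tilde{g}_{\max}$ explicitly.
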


\begin{proof}
This follows via (\ref{eq:effVupdate}) from induction together with
(\ref{eq:Gtnn}).
\end{proof}

\subsection{Key definitions and lemmas \label{subsec:cbl_lemmas}}

Before proceeding with the convergence proof, we need a generalization
of the strong convexity result (Lemma \ref{lem:stronglyconvex}) which
supplies a sharper strong convexity parameter for $S_{\mathrm{FD}}(X)$
according to the trace of $X$ (which shall remain bounded in the
complete basis set limit for fixed volume).

Accordingly, we define 
\[
\mathcal{X}_{\tau}:=\{X:\,0\preceq X\preceq\mathbf{I}_{n},\quad\Tr[X]\leq\tau\},
\]
 so $\mathcal{X}_{\tau}\subset\mathcal{X}_{n}=\mathcal{X}$. Then
we prove: 
\begin{lem}
\label{lem:stronglyconvextau}$S_{\mathrm{FD}}$ is $(1/\tau)$-strongly
convex on $\mathcal{X_{\tau}}$ with respect to the nuclear norm $\Vert\,\cdot\,\Vert_{*}$.
\end{lem}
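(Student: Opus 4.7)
The plan is to adapt the proof of Lemma \ref{lem:stronglyconvex}, which is based on the block-diagonal lift $\tilde{X} = \mathrm{diag}(X, I_n - X)$ (a PSD $2n \times 2n$ matrix with trace $n$), the identity $D_{S_{\mathrm{FD}}}(Y\|X) = D(\tilde{Y}\|\tilde{X})$ (Umegaki relative entropy), and the standard quantum Pinsker inequality applied after normalization. That baseline argument delivers the strong-convexity constant $2/n$; the task here is to refine this to $1/\tau$ by exploiting the additional assumption $\mathrm{Tr}\,X, \mathrm{Tr}\,Y \leq \tau$, which says that the ``top block'' of the lift has small trace.

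The cleanest route is via the Hessian. Writing
\[
D_{S_{\mathrm{FD}}}(Y\|X) = \int_0^1 (1-t)\,\nabla^2 S_{\mathrm{FD}}(Z_t)[Y-X,Y-X]\,dt
\]
with $Z_t = X + t(Y-X) \in \mathcal{X}_\tau$ (by convexity of $\mathcal{X}_\tau$), it suffices to prove the pointwise bound
\[
\nabla^2 S_{\mathrm{FD}}(Z)[H,H] \geq \tfrac{1}{\tau}\|H\|_*^2 \qquad \text{for all } Z \in \mathcal{X}_\tau,\ H = H^\top,
\]
since integration then yields $D_{S_{\mathrm{FD}}}(Y\|X) \geq (2\tau)^{-1}\|Y-X\|_*^2$, which is the desired $(1/\tau)$-strong convexity in the convention of Lemma \ref{lem:stronglyconvex}. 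The divided-difference formula in the eigenbasis of $Z$ gives $\nabla^2 S_{\mathrm{FD}}(Z)[H,H] = \sum_{ij}(g')^{[1]}(\mu_i,\mu_j)\tilde{H}_{ij}^2$ with $g'(x) = \log(x/(1-x))$, and the bound $(g')^{[1]}(a,b) \geq 2/(a+b)$ (which follows from the AM--LM inequality $L(a,b) \leq (a+b)/2$ on the logarithmic mean) reduces the task to the ``Lyapunov-type'' estimate
\[
\sum_{ij}\frac{\tilde{H}_{ij}^2}{\mu_i + \mu_j} \geq \frac{\|H\|_*^2}{2\tau}.
\]

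In the commuting case ($H$ diagonal in the eigenbasis of $Z$) this is immediate from Cauchy--Schwarz together with $\sum_i \mu_i \leq \tau$: $\sum_i \tilde H_{ii}^2/(2\mu_i) \geq (\sum_i|\tilde H_{ii}|)^2/(2\sum_i \mu_i) = \|H\|_*^2/(2\,\mathrm{Tr}\,Z) \geq \|H\|_*^2/(2\tau)$, using $\sum_i |\tilde H_{ii}| = \|H\|_*$ for diagonal $H$. The main obstacle is the non-commuting case, where Schur--Horn only gives $\sum_i |\tilde H_{ii}| \leq \|H\|_*$ with strict inequality in general, so the off-diagonal entries of $\tilde H$ must also be exploited to recover the missing mass. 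The plan is to pass to the Lyapunov operator $\mathcal{L}_Z(M) := ZM + MZ$, writing the LHS as $\mathrm{Tr}[H\mathcal{L}_Z^{-1}(H)]$, and to leverage the nuclear--operator norm duality $\|H\|_* = \|\mathcal{L}_Z(M)\|_* \leq 2\|Z\|_*\|M\|_{\mathrm{op}} \leq 2\tau\|M\|_{\mathrm{op}}$ (with $M = \mathcal{L}_Z^{-1}(H)$) in combination with the cyclic identity $\mathrm{Tr}[HM] = 2\,\mathrm{Tr}[ZM^2]$; a Cauchy--Schwarz step that uses the trace constraint $\mathrm{Tr}\,Z \leq \tau$ to bridge $\mathrm{Tr}[ZM^2]$ and $\|M\|_{\mathrm{op}}^2$ then closes the bound with the claimed constant. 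This last duality step is the main technical obstacle.
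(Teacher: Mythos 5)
Your route is genuinely different from the paper's. The paper proves this lemma by the scaling identity
\[
S_{\mathrm{FD}}(X)=\tau\,S_{\mathrm{VN}}(\tau^{-1}X)+\tau\,S_{\mathrm{VN}}(\tau^{-1}[\mathbf{I}_n-X])+n+n\log\tau,
\]
then invokes, as a black box, the $1$-strong-convexity (w.r.t.\ $\Vert\cdot\Vert_*$) of the unnormalized von Neumann entropy on $\{Y\succeq0:\Tr Y\leq1\}$; since $\Tr[\tau^{-1}X]\leq1$ this applies to the first term, giving the Hessian lower bound $\tau^{-1}\Vert Z\Vert_*^2$, and the second term is simply dropped as PSD. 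Your approach instead works from first principles via the divided-difference representation of the Hessian, the arithmetic--logarithmic mean inequality, and a Lyapunov-type estimate, which is more self-contained (it does not rely on an external strong-convexity result for $S_{\mathrm{VN}}$), and it correctly isolates the right intermediate target
\[
\sum_{ij}\frac{\tilde H_{ij}^2}{\mu_i+\mu_j}\;\geq\;\frac{\Vert H\Vert_*^2}{2\tau}.
\]

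However, the final step as you sketch it does not close, and the failure is a direction error. Your chain $\Vert H\Vert_*\leq2\tau\Vert M\Vert_{\mathrm{op}}$ gives $\Vert M\Vert_{\mathrm{op}}\geq\Vert H\Vert_*/(2\tau)$, but the ``bridge'' between $\Tr[ZM^2]$ and $\Vert M\Vert_{\mathrm{op}}^2$ goes the wrong way: Cauchy--Schwarz (really H\"older) gives $\Tr[ZM^2]\leq\Tr[Z]\,\Vert M\Vert_{\mathrm{op}}^2\leq\tau\Vert M\Vert_{\mathrm{op}}^2$, an \emph{upper} bound, which is useless for lower-bounding $\Tr[HM]=2\Tr[ZM^2]$. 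The fix is to bypass $\Vert M\Vert_{\mathrm{op}}$ entirely and apply the Schatten H\"older inequality $\Vert AB\Vert_1\leq\Vert A\Vert_2\Vert B\Vert_2$ directly to $H=ZM+MZ$:
\[
\Vert H\Vert_*\leq2\Vert ZM\Vert_*\leq2\Vert Z^{1/2}\Vert_{\mathrm F}\,\Vert Z^{1/2}M\Vert_{\mathrm F}=2\sqrt{\Tr Z}\,\sqrt{\Tr[ZM^2]}\leq2\sqrt{\tau\,\Tr[ZM^2]},
\]
so that $\Tr[HM]=2\Tr[ZM^2]\geq\Vert H\Vert_*^2/(2\tau)$, which is exactly your target. (Here $\Vert ZM\Vert_*=\Vert MZ\Vert_*$ since $Z$ and $M$ are symmetric.) With this substitution your argument is complete and yields the same constant as the paper.
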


The proof is given in Appendix \ref{app:cbl}.

In the complete basis set limit, we also cannot rely on $c_{\mathrm{h}}=\Vert V\Vert_{\infty}c_{\Psi}$
(\ref{eq:ch}) being bounded by a constant. The reason is that although
it is possible to maintain that $c_{\Psi}$ is bounded by a constant
in the complete basis set limit, in fact $\Vert V\Vert_{\infty}$
scales with the density of basis functions per unit volume. (A concrete
illustration will be offered in Section \ref{subsec:Hartree-contribution}
below.) In fact, the main importance of $c_{\mathrm{h}}$ in our arguments
above is in bounding terms related to the Hartree potential $\nabla\tilde{E}(X)$.
In our preceding analysis, we use the trivial bound $\rho(X)\leq1$
to bound the potential in terms of $c_{\mathrm{h}}$. However, in
the complete basis set limit (with a fixed number of electrons per
unit volume), we expect that the occupation of any individual site
will scale inversely with the density of basis functions per unit
volume, so that the growth of $\Vert V\Vert_{\infty}$ and the decay
of $\rho(X)$ compensate for each other.

Therefore it is more natural to rely on the quantity 
\begin{equation}
\tilde{c}_{\mathrm{h}}:=\max\left(\Vert\,\vert V\vert\,\rho(X_{\star})\Vert_{\infty},\ \max_{t=0,\ldots,T-1}\Vert\,\vert V\vert\,\rho(X_{t})\Vert_{\infty}\right).\label{eq:chtilde}
\end{equation}
 Here $\vert V\vert$ denotes the entrywise absolute value of $V$.
For a non-negative electron-electron interaction $v_{\mathrm{ee}}\geq0$
and an interpolating basis as we shall consider in Section \ref{subsec:Hartree-contribution},
naturally we will have $\vert V\vert=V$. Therefore following the
interpretation of Section \ref{subsec:Electron-repulsion-integrals},
$\tilde{c}_{h}$ can be viewed as a bound on $L^{\infty}$ norm of
the spatial Hartree potential (evaluated on a set of interpolating
points) over the optimization trajectory.

We will not provide any \emph{a priori }bounds for this quantity,
which would rely on more detailed analysis involving the composition
of the single-particle matrix $C$ as a sum of kinetic and potential
terms (cf. Section \ref{subsec:Hartree-contribution}). If the external
potential $v_{\mathrm{ext}}$ is assumed to be bounded (as could be
guaranteed via the use of a nuclear pseudopotential \cite{Gygi}),
then since the Hartree potential is also uniformly bounded (via Lemma
\ref{lem:Veffbound}) over the optimization trajectory, the effective
Hamiltonian $H_{t}$ would consist of a kinetic (Laplacian term) together
with a bounded effective potential. We conjecture that for reasonable
choices of basis (such as the periodic sinc basis considered in Section
\ref{sec:Experiments} below), such assumptions would suffice for
a suitable \emph{a priori }bound for $\rho(X_{t})$ over the optimization
trajectory. However, such an analysis will take us outside the intended
scope of our results, which we intend to phrase more naturally in
terms of physical quantities and with a minimum of detailed assumptions.
In any case, with or without \emph{a priori }bounds, the quantity
(\ref{eq:chtilde}) is the physically relevant quantity to be featured
in the analysis.

Before proceeding with the convergence proof, we need to rephrase
some of our earlier lemmas more delicately in terms of $\tilde{c}_{h}$.

First, we modify Lemma \ref{lem:Gestbound} as follows: 
\begin{lem}
\label{lem:Gestbound-1} For any $\delta\in(0,1]$, the inequality
\[
\max_{t=0,\ldots,T-1}\Vert\hat{G}_{t}\Vert\leq2\left(1+4\log(Tm/\delta)\right)\tilde{c}_{\mathrm{h}}
\]
 holds with probability at least $1-\delta$.
\end{lem}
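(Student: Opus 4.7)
The plan is to mirror the proof of Lemma~\ref{lem:Gestbound}, but to fold $V$ into the trace estimator \emph{before} applying concentration, so that $\Vert V\Vert_\infty$ never appears explicitly. The key observation is that
\[
[V\hat{\rho}_t]_p \;=\; \sum_{q=1}^m V_{pq}\,z_t^\top\bigl[X_t^{1/2}\Psi^\top e_q e_q^\top \Psi X_t^{1/2}\bigr]z_t \;=\; z_t^\top A_{t,p}\,z_t,
\]
where $A_{t,p} := X_t^{1/2}\Psi^\top\mathrm{diag}^*[V_{p,:}]\,\Psi X_t^{1/2}$ is symmetric and $\mathcal{F}_{t-1}$-measurable, with $\Tr[A_{t,p}] = [V\rho(X_t)]_p$. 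In other words, each entry of $V\hat{\rho}_t$ is itself a Hutchinson trace estimator, to which I would apply Lemma~\ref{lem:hutchsubexp} directly.

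To compute the sub-exponential parameters, I would write $A_{t,p} = \sum_q V_{pq}\,u_{t,q}u_{t,q}^\top$ with $u_{t,q}:=X_t^{1/2}\Psi_{q,:}^\top$, so that $\Vert u_{t,q}\Vert^2 = [\rho(X_t)]_q$. A direct expansion together with Cauchy--Schwarz then gives
\[
\Vert A_{t,p}\Vert_{\mathrm F}^2 \;=\; \sum_{q,r} V_{pq}V_{pr}\,(u_{t,q}^\top u_{t,r})^2 \;\leq\; \sum_{q,r}\vert V_{pq}\vert\,\vert V_{pr}\vert\,[\rho(X_t)]_q[\rho(X_t)]_r \;=\; \bigl([\vert V\vert\rho(X_t)]_p\bigr)^2,
\]
and hence $\Vert A_{t,p}\Vert \leq \Vert A_{t,p}\Vert_{\mathrm F} \leq [\vert V\vert\rho(X_t)]_p$. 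By Lemma~\ref{lem:hutchsubexp}, $[V\hat{\rho}_t]_p$ is therefore sub-exponential conditional on $\mathcal{F}_{t-1}$ with parameters $(\nu_{t,p},2\nu_{t,p})$ for $\nu_{t,p}=2[\vert V\vert\rho(X_t)]_p$, and these upper bounds are dominated surely by $2\tilde{c}_{\mathrm{h}}$ via definition (\ref{eq:chtilde}).

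Applying Corollary~\ref{cor:subexp} (to $[V\hat{\rho}_t]_p$ and, if needed, to its negation) with failure probability $\delta/(Tm)$ and taking a union bound over $t$ and $p$ gives
\[
\max_{t,p}\bigl\vert[V\hat{\rho}_t]_p\bigr\vert \;\leq\; \bigl\vert[V\rho(X_t)]_p\bigr\vert + \nu_{t,p}\bigl(\tfrac12 + 4\log(Tm/\delta)\bigr) \;\leq\; 2\tilde{c}_{\mathrm{h}}\bigl(1+4\log(Tm/\delta)\bigr)
\]
on an event of probability at least $1-\delta$, using $\vert[V\rho(X_t)]_p\vert\leq[\vert V\vert\rho(X_t)]_p\leq\tilde{c}_{\mathrm{h}}$. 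The conclusion then follows from $\Vert\hat{G}_t\Vert = \Vert\Psi^\top\mathrm{diag}^*[V\hat{\rho}_t]\Psi\Vert \leq \Vert V\hat{\rho}_t\Vert_\infty$, since $\Psi$ is an isometry and the spectral norm of a diagonal matrix equals the $\ell^\infty$ norm of its diagonal. The main subtlety compared to Lemma~\ref{lem:Gestbound} is that $\tilde{c}_{\mathrm{h}}$ itself is random and depends on the entire trajectory, so the sub-exponential parameter cannot be made unconditionally deterministic; this is handled by carrying out the concentration step conditionally with the $\mathcal{F}_{t-1}$-measurable quantity $[\vert V\vert\rho(X_t)]_p$ and only passing to the surely valid bound $\tilde{c}_{\mathrm{h}}$ on the final good event.
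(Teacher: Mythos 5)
Your approach is genuinely different from the paper's and is worth comparing. The paper first applies the Hutchinson concentration bound to each non-negative entry $[\hat{\rho}_t]_q$, then uses the entrywise inequality $\vert V\hat{\rho}_t\vert\leq\vert V\vert\,\hat{\rho}_t$ (valid because $\hat{\rho}_t\geq0$) to transfer the one-sided bound to $\Vert V\hat{\rho}_t\Vert_\infty$. You instead observe that $[V\hat{\rho}_t]_p=z_t^\top A_{t,p}z_t$ is itself a single trace estimator with an explicitly computable $A_{t,p}$, and you control its sub-exponential parameters directly via the Cauchy--Schwarz estimate $\Vert A_{t,p}\Vert_{\mathrm F}\leq[\vert V\vert\rho(X_t)]_p$; this is an elegant reformulation and the Frobenius-norm computation is correct. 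However, there is a real (if small) gap: since $A_{t,p}$ need not be positive semidefinite, $[V\hat{\rho}_t]_p$ can be negative and you do need the two-sided bound you gesture at. Applying Corollary~\ref{cor:subexp} to both $[V\hat{\rho}_t]_p$ and its negation at level $\delta/(Tm)$ produces $2Tm$ events, so the union bound only guarantees the good event with probability $1-2\delta$, not $1-\delta$. Fixing this by using $\delta/(2Tm)$ per application yields $\log(2Tm/\delta)$ in place of $\log(Tm/\delta)$, a constant that is strictly weaker than the lemma statement.

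The cleanest repair within your framework is to replace $V$ by $\vert V\vert$ before folding it into the estimator: set $A'_{t,p}:=X_t^{1/2}\Psi^\top\mathrm{diag}^*[\,\vert V\vert_{p,:}\,]\Psi X_t^{1/2}$, which is positive semidefinite, so $z_t^\top A'_{t,p}z_t=[\vert V\vert\hat{\rho}_t]_p\geq[V\hat{\rho}_t]_p\geq -[\vert V\vert\hat{\rho}_t]_p$ holds surely (again using $\hat{\rho}_t\geq0$), and a one-sided upper bound on $[\vert V\vert\hat{\rho}_t]_p$ suffices to control $\vert[V\hat{\rho}_t]_p\vert$. Since $\Tr[A'_{t,p}]=[\vert V\vert\rho(X_t)]_p$ and the same Cauchy--Schwarz computation applies to $A'_{t,p}$, you then recover the stated constant with exactly $Tm$ applications of the corollary. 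This modification essentially reintroduces the paper's key observation that $\hat{\rho}_t\geq0$ entrywise, which is the reason the paper's route never needs a two-sided bound.
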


Second, we modify Lemma \ref{lem:mdsprep}. We make the statement
more specific, tailored precisely to how we will use the lemma (analogously
to the use of Lemma \ref{lem:mdsprep} in the proof of Theorem \ref{thm:convergence}).
\begin{lem}
\label{lem:mdsprep-1} For all $t=0,\ldots,T-1$, the random variable
$\left\langle \Delta_{t},X_{\star}-X_{t}\right\rangle $ is sub-exponential
with parameters $(4\tilde{c}_{\mathrm{h}}\Vert X_{t}\Vert_{\mathrm{F}},8\tilde{c}_{\mathrm{h}})$,
conditioned on $\mathcal{F}_{t-1}$.
\end{lem}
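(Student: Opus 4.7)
The plan is to mirror the proof of Lemma \ref{lem:mdsprep}, specializing to the specific choice $Y = X_\star - X_t$ and replacing the crude spectral-norm bound on $\nabla\tilde{E}(Y)$ with a sharper bound phrased in terms of the discrete density $\rho$. Note first that $Y = X_\star - X_t$ is $\mathcal{F}_{t-1}$-measurable (since $X_\star$ is deterministic and $X_t$ is $\mathcal{F}_{t-1}$-measurable), so the machinery of Lemma \ref{lem:mdsprep} applies. Using the linearity of $\nabla\tilde{E}$, I would write
\[
\left\langle \tilde{G}_t, X_\star - X_t\right\rangle = z_t^\top A\, z_t, \quad \text{where } A := X_t^{1/2}\bigl[\nabla\tilde{E}(X_\star) - \nabla\tilde{E}(X_t)\bigr]X_t^{1/2},
\]
so by Lemma \ref{lem:hutchsubexp} this quantity is sub-exponential conditioned on $\mathcal{F}_{t-1}$ with parameters $(2\Vert A\Vert_{\mathrm{F}}, 4\Vert A\Vert)$. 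Subtracting the conditional mean yields $\left\langle \Delta_t, X_\star - X_t\right\rangle$ with the same parameters.

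The crux is to estimate $\Vert A\Vert$ and $\Vert A\Vert_{\mathrm{F}}$ in terms of $\tilde{c}_{\mathrm{h}}$ instead of $c_{\mathrm{h}}$. For any $X \in \mathcal{X}$, using the representation $\nabla\tilde{E}(X) = \Psi^\top \mathrm{diag}^*[V\rho(X)]\Psi$ together with the fact that $\Psi$ is an isometry,
\[
\Vert \nabla\tilde{E}(X)\Vert \,\le\, \Vert V\rho(X)\Vert_\infty.
\]
Since $\rho(X) \ge 0$ componentwise, the triangle inequality in each coordinate gives $|[V\rho(X)]_p| \le [|V|\rho(X)]_p$, hence $\Vert V\rho(X)\Vert_\infty \le \Vert\, |V|\rho(X)\Vert_\infty$. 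Applying this at $X = X_\star$ and $X = X_t$ and invoking the definition (\ref{eq:chtilde}) of $\tilde{c}_{\mathrm{h}}$ gives $\Vert \nabla\tilde{E}(X_\star)\Vert, \Vert \nabla\tilde{E}(X_t)\Vert \le \tilde{c}_{\mathrm{h}}$, so by the triangle inequality $\Vert \nabla\tilde{E}(X_\star) - \nabla\tilde{E}(X_t)\Vert \le 2\tilde{c}_{\mathrm{h}}$.

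From here the bounds on $A$ follow exactly as in the proof of Lemma \ref{lem:mdsprep}: since $\Vert X_t^{1/2}\Vert^2 \le 1$, the spectral bound gives $\Vert A\Vert \le 2\tilde{c}_{\mathrm{h}}$, and the submultiplicative computation
\[
\Vert A\Vert_{\mathrm{F}}^2 = \Tr\!\bigl[(\nabla\tilde{E}(X_\star-X_t))X_t(\nabla\tilde{E}(X_\star-X_t))X_t\bigr] \le \Vert \nabla\tilde{E}(X_\star-X_t)\Vert^2\,\Vert X_t\Vert_{\mathrm{F}}^2
\]
yields $\Vert A\Vert_{\mathrm{F}} \le 2\tilde{c}_{\mathrm{h}}\Vert X_t\Vert_{\mathrm{F}}$. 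Plugging these bounds into the sub-exponential parameters $(2\Vert A\Vert_{\mathrm{F}}, 4\Vert A\Vert)$ gives exactly $(4\tilde{c}_{\mathrm{h}}\Vert X_t\Vert_{\mathrm{F}}, 8\tilde{c}_{\mathrm{h}})$, as claimed.

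The main conceptual hurdle is the replacement of the crude estimate $\Vert V\Vert_\infty c_\Psi = c_{\mathrm{h}}$ by the potential-averaged quantity $\tilde{c}_{\mathrm{h}}$; this is precisely what is needed in the complete basis set limit, where $c_{\mathrm{h}}$ blows up but $\tilde{c}_{\mathrm{h}}$ (being a pointwise bound on the Hartree potential at interpolation points along the trajectory) remains physically bounded. Once the pointwise inequality $|V\rho(X)| \le |V|\rho(X)$ is recognized, the remaining computations are identical to those in Lemma \ref{lem:mdsprep}, so no further technical obstacle arises.
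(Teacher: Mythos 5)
Your proof is correct and follows essentially the same route as the paper's. The only cosmetic difference is bookkeeping: the paper splits $A = A_1 - A_2$ with $A_i = X_t^{1/2}\nabla\tilde E(X_\star)X_t^{1/2}$ and $X_t^{1/2}\nabla\tilde E(X_t)X_t^{1/2}$, bounding each piece by $\tilde c_{\mathrm h}$ (resp.\ $\tilde c_{\mathrm h}\Vert X_t\Vert_{\mathrm F}$) and summing, whereas you bound $\nabla\tilde E(X_\star)-\nabla\tilde E(X_t)$ first via the triangle inequality and then plug it into the spectral/Frobenius estimates of Lemma \ref{lem:mdsprep}; these are trivially equivalent. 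A small point in your favor: you make explicit the step $\Vert V\rho(X)\Vert_\infty \le \Vert\,\vert V\vert\,\rho(X)\Vert_\infty$ (which requires $\rho(X)\ge 0$ componentwise), which the paper's proof uses silently when passing from $\Vert V\rho(X_\star)\Vert_\infty$ to $\tilde c_{\mathrm h}$.
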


The proofs of both lemmas are in Appendix \ref{app:cbl}.

\subsection{Convergence theorem and proof \label{subsec:cbl_convergence}}
\begin{thm}
\label{thm:convergence-1} For any $\delta\in(0,1]$, define $c_{T,m,\delta}:=2\left(1+4\log(2Tm/\delta)\right).$
Also let $\tau:=\max\left(\Tr[X_{0}],1\right)$, where $X_{0}=f_{\beta}(C-\mu\mathbf{I}_{n})$
is the initial condition. Consider algorithm defined by (\ref{eq:update2})
with step size $\eta_{t}=\eta:=\sqrt{\frac{2D(X_{\star}\Vert X_{0})}{\tau\,c_{T,m,\delta}^{2}\,\tilde{c}_{\mathrm{h}}^{2}\,T}}$,
or equivalently by (\ref{eq:update}) with $\gamma_{t}:=\frac{\eta\beta}{\eta+\beta}$.
Then 
\[
\frac{1}{T}\sum_{t=0}^{T-1}\left(F_{\beta}(X_{t})-\mu\Tr[X_{t}]\right)\leq F_{\beta}(X_{\star})-\mu\Tr[X_{\star}]+\frac{\left(\sqrt{2\tau\,D(X_{\star}\Vert X_{0})}+2\tau\right)c_{T,m,\delta}\,\tilde{c}_{\mathrm{h}}}{\sqrt{T}}
\]
 holds with probability at least $1-\delta$.
\end{thm}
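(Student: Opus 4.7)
The plan is to repeat the proof of Theorem \ref{thm:convergence} essentially verbatim through the summed recursion, but with three targeted substitutions. First, after absorbing $\mu$ into $C$, I carry out the proximal manipulations (\ref{eq:in1})--(\ref{eq:in4}) using the sharper bound $D(X_{t+1}\Vert X_t)\geq\frac{1}{2\tau}\Vert X_{t+1}-X_t\Vert_*^2$ from Lemma \ref{lem:stronglyconvextau} in place of (\ref{eq:in2}). To legitimize this substitution I need each iterate to lie in $\mathcal{X}_\tau$; this is where Lemma \ref{lem:Veffbound} and Assumption \ref{assump:veenn} enter, yielding $V_t\succeq0$ and hence $H_t\succeq C-\mu\mathbf{I}_n$. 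Weyl's inequality combined with scalar monotonicity of $f_\beta$ then gives $\Tr[X_t]=\sum_i f_\beta(\lambda_i(H_t))\leq\sum_i f_\beta(\lambda_i(C-\mu\mathbf{I}_n))=\Tr[X_0]\leq\tau$. The matching AM-GM step replaces the coefficient $\frac{n}{4}$ of (\ref{eq:in5}) by $\frac{\tau}{2}$.

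The decisive new step is to eliminate the $\Vert C\Vert$-dependent boundary term $\langle C,X_0-X_T\rangle+\frac{1}{\beta}[S_{\mathrm{FD}}(X_0)-S_{\mathrm{FD}}(X_T)]$ that survives telescoping in (\ref{eq:reflater}). This is precisely why the new initialization $X_0=f_\beta(C-\mu\mathbf{I}_n)$ is chosen: it is the unconstrained minimizer of $\Tr[CX]+\beta^{-1}S_{\mathrm{FD}}(X)$, so the first-order condition $\nabla S_{\mathrm{FD}}(X_0)=-\beta C$ holds. Unpacking the definition of $D(X_T\Vert X_0)$ with this relation gives
\[
\langle C,X_0-X_T\rangle+\frac{1}{\beta}[S_{\mathrm{FD}}(X_0)-S_{\mathrm{FD}}(X_T)]=-\frac{1}{\beta}D(X_T\Vert X_0)\leq 0.
\]
Both boundary contributions are therefore killed, leaving the clean inequality
\[
\sum_{t=0}^{T-1}[F_\beta(X_t)-F_\beta(X_\star)]\leq\frac{D(X_\star\Vert X_0)}{\eta}+\frac{\tau}{2}\eta\sum_{t=0}^{T-1}\Vert\tilde{G}_t\Vert^2+\sum_{t=0}^{T-1}\langle\Delta_t,X_\star-X_t\rangle,
\]
in which neither $\Vert C\Vert$ nor $n$ appears.

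The remainder mirrors the original argument. Lemma \ref{lem:Gestbound-1} yields $\sum\Vert\tilde{G}_t\Vert^2\leq c_{T,m,\delta}^2\tilde{c}_{\mathrm{h}}^2T$ with probability at least $1-\delta/2$. For the martingale sum, Lemma \ref{lem:mdsprep-1} combined with $\Vert X_t\Vert_{\mathrm{F}}^2\leq\Tr[X_t^2]\leq\Tr[X_t]\leq\tau$ delivers conditional sub-exponential parameters $(4\tilde{c}_{\mathrm{h}}\sqrt{\tau},8\tilde{c}_{\mathrm{h}})$; Theorem \ref{thm:mds} followed by a mild parameter loosening and Corollary \ref{cor:subexp} then gives $\sum\langle\Delta_t,X_\star-X_t\rangle\leq 2\sqrt{\tau}\,c_{T,m,\delta}\,\tilde{c}_{\mathrm{h}}\sqrt{T}$ with probability at least $1-\delta/2$. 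Substituting the prescribed step size balances the first two terms, and invoking $\sqrt{\tau}\leq\tau$ (which is why $\tau$ is clipped at $1$) to absorb the martingale contribution produces the stated bound after dividing by $T$ and applying a union bound.

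The main obstacle is engineering the cancellation in the second paragraph: tracking the factors of $\beta$ and signs so that the $\langle C,\cdot\rangle$ and entropy boundary terms reassemble into the nonnegative quantity $\beta^{-1}D(X_T\Vert X_0)$. Without this algebraic identity, the argument would inherit the same irreducible $\Vert C\Vert$ dependence as Theorem \ref{thm:convergence}, which is exactly what the complete basis set limit requires us to shed. The trace bound $\Tr[X_t]\leq\tau$, while essential for invoking Lemma \ref{lem:stronglyconvextau}, is comparatively routine once Assumption \ref{assump:veenn} and Lemma \ref{lem:Veffbound} have been applied.
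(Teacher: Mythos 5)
Your proof is correct and follows essentially the same route as the paper: establish $\Tr[X_t]\leq\Tr[X_0]\leq\tau$ via Lemma \ref{lem:Veffbound} and Weyl monotonicity, replace the strong-convexity constant $2/n$ by $1/\tau$, and kill the boundary terms by identifying $\langle C,X_0-X_T\rangle+\beta^{-1}[S_{\mathrm{FD}}(X_0)-S_{\mathrm{FD}}(X_T)]=-\beta^{-1}D(X_T\Vert X_0)\leq0$ from the first-order condition $\nabla S_{\mathrm{FD}}(X_0)=-\beta C$ (your sign is right; the paper's inline ``since $C=\frac{1}{\beta}\nabla S_{\mathrm{FD}}(X_0)$'' has a sign slip, though its displayed identity is correct). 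Your chain $\Vert X_t\Vert_{\mathrm{F}}^2=\Tr[X_t^2]\leq\Tr[X_t]\leq\tau$ is in fact tighter than the paper's $\Vert X_t\Vert_{\mathrm{F}}\leq\Tr[X_t]\leq\tau$, but since you loosen $\sqrt\tau\leq\tau$ at the end (using $\tau\geq1$) you land on exactly the stated bound.
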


\begin{proof}
As in the proof of Theorem \ref{thm:convergence}, we can assume without
loss of generality that $\mu=0$.

First we claim that 
\[
\Tr[X_{t}]\leq\Tr[X_{0}]
\]
 for all $t$.

To see this, recall that $X_{t}=f_{\beta}(H_{t})$, and moreover $H_{t}=H_{0}+V_{t}\succeq H_{0}$
by Lemma \ref{lem:Veffbound}. Then Weyl's monotonicity theorem guarantees
that 
\[
\lambda_{k}(H_{t})\geq\lambda_{k}(H_{0})
\]
 for all $k$, where $\lambda_{k}$ denotes the $k$-th eigenvalue,
ordered non-decreasingly. Then since $f_{\beta}$ is a decreasing
function, it follows that 
\[
\Tr[X_{t}]=\sum_{k=1}^{n}f_{\beta}(\lambda_{k}(H_{t}))\leq\sum_{k=1}^{n}f_{\beta}(\lambda_{k}(H_{0}))=\Tr[X_{0}],
\]
 as claimed. Since $\tau=\max\left(\Tr[X_{0}],1\right)$ it follows
that $X_{t}\in\mathcal{X}_{\tau}$ for all $t$.

Again for simplicity, we replace $C\leftarrow C-\mu\mathbf{I}_{n}$
to ease the notation in the proof. Then by the exact same argument
as in the proof of Theorem \ref{thm:convergence}, where instead by
way of Lemma \ref{lem:stronglyconvextau} we use $\tau^{-1}$ in the
place of $2/n$ as the strong convexity parameter, we deduce (cf.
(\ref{eq:reflater})) that 
\begin{align*}
\sum_{t=0}^{T-1}\left[F_{\beta}(X_{t})-F_{\beta}(X_{\star})\right] & \leq\frac{1}{\eta}\left[d_{0}-d_{T}\right]+\frac{\tau}{2}\eta\sum_{t=0}^{T-1}\Vert\tilde{G}_{t}\Vert^{2}+\sum_{t=0}^{T-1}\left\langle \Delta_{t},X_{\star}-X_{t}\right\rangle \\
 & \quad\quad+\ \left\langle C,X_{0}-X_{T}\right\rangle +\frac{1}{\beta}\left[S_{\mathrm{FD}}(X_{0})-S_{\mathrm{FD}}(X_{T})\right].
\end{align*}
 Importantly, we can now group the last two terms in a meaningful
way due to our choice of initial condition. Indeed, since $C=\frac{1}{\beta}\nabla S_{\mathrm{FD}}(X_{0})$,
we can compute: 
\begin{align*}
 & \left\langle C,X_{0}-X_{T}\right\rangle +\frac{1}{\beta}\left[S_{\mathrm{FD}}(X_{0})-S_{\mathrm{FD}}(X_{T})\right]\\
 & \quad\quad=\ -\frac{1}{\beta}D(X_{T}\Vert X_{0}),
\end{align*}
 which is in particular $\leq0$.

Again, $d_{T}\geq0$, so we deduce 
\[
\sum_{t=0}^{T-1}\left[F_{\beta}(X_{t})-F_{\beta}(X_{\star})\right]\leq\frac{d_{0}}{\eta}+\frac{\tau}{2}\eta\sum_{t=0}^{T-1}\Vert\tilde{G}_{t}\Vert^{2}+\sum_{t=0}^{T-1}\left\langle \Delta_{t},X_{\star}-X_{t}\right\rangle .
\]
 The bounding of the last two terms proceeds similarly to the proof
of Theorem \ref{thm:convergence}, except that we do not use the trivial
bound $\Vert X_{t}\Vert_{\mathrm{F}}\leq\sqrt{n}$ to control the
sub-exponentiality parameters of $\sum_{t=0}^{T-1}\left\langle \Delta_{t},X_{\star}-X_{t}\right\rangle $.

Indeed, carrying forward the argument without inserting this bound,
we deduce from Lemma \ref{lem:mdsprep-1} and Theorem \ref{thm:mds}
that $\sum_{t=0}^{T-1}\left\langle \Delta_{t},X_{\star}-X_{t}\right\rangle $
is sub-exponential with parameters $\left(4\sqrt{T}\Vert X_{t}\Vert_{\mathrm{F}}\,\tilde{c}_{\mathrm{h}},8\tilde{c}_{\mathrm{h}}\right)$.
Now $\Vert X_{t}\Vert_{\mathrm{F}}\leq\Tr[X_{t}]$ since $X_{t}\succeq0$,
and in turn $\Tr[X_{t}]\leq\tau$. Therefore, since $\tau\geq1$,
we can loosen the sub-exponentiality parameters to $\left(4\tau\sqrt{T}\,\tilde{c}_{\mathrm{h}},8\tau\sqrt{T}\,\tilde{c}_{\mathrm{h}}\right)$
and apply Corollary \ref{cor:subexp} to deduce that 
\[
\sum_{t=0}^{T-1}\left\langle \Delta_{t},X_{\star}-X_{t}\right\rangle \leq2\tau c_{T,m,\delta}\tilde{c}_{\mathrm{h}}\sqrt{T}
\]
 with probability at least $1-\delta/2$. (Compare with (\ref{eq:in8}).
Relative to that inequality, we have simply replaced $\sqrt{n}$ in
the right-hand side with $\tau$ and $c_{\mathrm{h}}$ with $\tilde{c}_{h}$.)

Similarly to the proof of Theorem \ref{thm:mds} (except that we use
Lemma \ref{lem:Gestbound-1} in the place of Lemma \ref{lem:Gestbound}),
we can bound 
\[
\sum_{t=0}^{T-1}\Vert\tilde{G}_{t}\Vert^{2}\leq c_{T,m,\delta}^{2}\,\tilde{c}_{\mathrm{h}}^{2}\,T
\]
 with probability at least $1-\delta/2$.

Then it follows that 
\[
\sum_{t=0}^{T-1}\left[F_{\beta}(X_{t})-F_{\beta}(X_{\star})\right]\leq\frac{d_{0}}{\eta}+\frac{\tau}{2}\eta\,c_{T,m,\delta}^{2}\,\tilde{c}_{\mathrm{h}}^{2}\,T+2\tau c_{T,m,\delta}\tilde{c}_{\mathrm{h}}\sqrt{T}
\]
 holds with probability at least $1-\delta$.

By choosing 
\[
\eta=\sqrt{\frac{d_{0}}{\frac{\tau}{2}c_{T,m,\delta}^{2}\,\tilde{c}_{\mathrm{h}}^{2}\,T}},
\]
 we complete the proof. 
\end{proof}

\subsection{Control in terms of eigenvalue growth \label{subsec:cbl_eig}}

Note that in Theorem \ref{thm:convergence-1}, the explicit dependence
on $\Vert C\Vert$ and $n$ have been eliminated. However, we want
to argue that $\Tr[X_{0}]$ and $D(X_{\star}\Vert X_{0})$ remain
bounded independent of these quantities in the complete basis set
limit.

In order to control these quantities, we must assume some model of
eigenvalue growth for the single-particle Hamiltonian matrix $C$.
Throughout we let $\lambda_{k}(A)$, $k=1,\ldots,n$, denote the $k$-th
eigenvalue of a symmetric matrix $A$, counted in non-decreasing order.
Then the eigenvalue growth assumption is as follows.

\begin{assump}

\label{assump:growth}Assume that $\lambda_{k}(C)\geq(k/c_{\lambda})^{\alpha}$
for some constant $\alpha,c_{\lambda}>0$.

\end{assump}

Note that this assumption is naturally satisfied (after a suitable
scalar shift) with $\alpha=2/d_{\mathrm{eff}}$ for quasi-$d_{\mathrm{eff}}$-dimensional
systems due to the growth of the eigenvalues of the Laplacian, in
both the thermodynamic and complete basis set limits. Intuitively,
we can think of $c_{\lambda}$ as proportional to the $d_{\mathrm{eff}}$-dimensional
volume or the number of atoms in the problem. We will not pursue a
detailed analysis guaranteeing the assumption since it is orthogonal
to the content of this work.

It is somewhat cumbersome to deal with the case of general $\alpha$,
so for simplicity we simply take $\alpha=1$ in the ensuing analysis.
The numerical constants that we obtain are not so physically relevant
anyway---the asymptotic eigenvalue growth kicks in once we have entered
the scattering part of the spectrum, while in practice we are mostly
interested in a chemical potential that targets the bound states.
Our main goal is simply to demonstrate the dependence of the quantities
$\Tr[X_{0}]$ and $D(X_{\star}\Vert X_{0})$ on $\beta$, $c_{\mathrm{h}}$,
and $c_{\lambda}$, as well as to verify their independence from $\Vert C\Vert$
and $n$. The dependence on $\mu$ that we obtain is less physically
relevant. For these reasons, we are content to adopt the simplifying
assumption $\alpha=1$. The heuristic conclusions do not change in
the general case.

Now we state two lemmas bounding $\Tr[X_{0}]$ and $D(X_{\star}\Vert X_{0})$,
respectively. The proofs are given in Appendix \ref{app:cbl}.
\begin{lem}
\label{lem:TrX0}Suppose that Assumption \ref{assump:growth} holds
with $\alpha=1$. Then 
\[
\Tr[X_{0}]\leq c_{\lambda}(\mu+\beta^{-1})+1.
\]
\label{lem:DXstarX0} Suppose that Assumption \ref{assump:growth}
holds with $\alpha=1$. Then 
\[
D(X_{\star}\Vert X_{0})\leq c_{\lambda}\left[(2\beta c_{\mathrm{h}}+1)\mu+4\beta^{-1}\right]+2.
\]
\end{lem}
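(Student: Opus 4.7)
For the first inequality, the plan is to diagonalize $X_0 = f_\beta(C - \mu I_n)$ simultaneously with $C$, reducing the problem to
\[
\Tr[X_0] = \sum_{k=1}^{n} f_\beta(\lambda_k(C) - \mu).
\]
I would then split this sum at the threshold $k_0 := \max(\lfloor c_\lambda \mu \rfloor, 0)$. For $k \leq k_0$, the crude bound $f_\beta \leq 1$ contributes at most $c_\lambda \mu$. For $k > k_0$, Assumption~\ref{assump:growth} with $\alpha = 1$ implies $\lambda_k(C) - \mu \geq k/c_\lambda - \mu > 0$, so the Fermi--Dirac decay $f_\beta(x) \leq e^{-\beta x}$ (valid for $x \geq 0$) applies and leaves the geometric tail
\[
\sum_{k > k_0} e^{-\beta(k/c_\lambda - \mu)} \;\leq\; \frac{1}{1 - e^{-\beta/c_\lambda}} \;\leq\; 1 + \frac{c_\lambda}{\beta},
\]
where the final inequality follows from the elementary fact $e^x \geq 1 + x$. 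Summing the head and tail gives the claimed estimate $c_\lambda(\mu + \beta^{-1}) + 1$.

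For the second inequality, the starting observation is that $X_0 = f_\beta(C - \mu I_n)$ corresponds, via the inverse mirror map, to the dual variable $\nabla S_{\mathrm{FD}}(X_0) = -\beta(C - \mu I_n)$. Expanding the Bregman divergence,
\[
D(X_\star \| X_0) = S_{\mathrm{FD}}(X_\star) - S_{\mathrm{FD}}(X_0) + \beta\langle C - \mu I_n, X_\star - X_0 \rangle.
\]
Applying the optimality of $X_\star$ for $F_\beta(X) - \mu\Tr[X]$ to the competitor $X = X_0$ and rearranging yields
\[
D(X_\star \| X_0) \;\leq\; \beta\bigl[\tilde E(X_0) - \tilde E(X_\star)\bigr] \;\leq\; \beta\, \tilde E(X_0),
\]
where the last step uses $\tilde E \geq 0$, a consequence of $V \succeq 0$.

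It remains to control $\tilde E(X_0) = \tfrac{1}{2} \rho(X_0)^\top V \rho(X_0)$. I would apply the H\"older-style inequality $\rho^\top V \rho \leq \|\rho\|_\infty \|V\|_\infty \|\rho\|_1$, which holds because $V$ is symmetric (so its induced $\ell^1$ and $\ell^\infty$ operator norms coincide). By Lemma~\ref{lem:Gbound}, $\|\rho(X_0)\|_\infty \leq c_\Psi \|X_0\| \leq c_\Psi$, while orthonormality of the columns of $\Psi$ gives $\|\rho(X_0)\|_1 = \Tr[\Psi X_0 \Psi^\top] = \Tr[X_0]$. Hence $\tilde E(X_0) \leq (c_{\mathrm{h}}/2) \Tr[X_0]$, and substituting the first part of the lemma yields $D(X_\star \| X_0) \leq (\beta c_{\mathrm{h}}/2)\bigl[c_\lambda(\mu + \beta^{-1}) + 1\bigr]$.

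The main obstacle here is really bookkeeping: every step is routine once the dual variable for $X_0$ is identified, and each reduction relies only on standard properties of the Fermi--Dirac function, the PSD structure of $V$, Assumption~\ref{assump:growth}, and Lemma~\ref{lem:Gbound}. The one delicate point is matching the exact constants $(2\beta c_{\mathrm{h}} + 1)$ and $4\beta^{-1}$ in the stated form; this amounts to mild slack accounting absorbing the lower-order terms $c_{\mathrm{h}} c_\lambda / 2$ and $\beta c_{\mathrm{h}}/2$ into the leading $\mu$- and $\beta^{-1}$-dependent terms.
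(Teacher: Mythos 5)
For the first inequality (bounding $\Tr[X_0]$), your proof is essentially identical to the paper's: diagonalize, split the sum at $k \approx c_\lambda\mu$, use $f_\beta \leq 1$ on the head and the exponential tail bound $f_\beta(x) \leq e^{-\beta x}$ on the tail, and sum the geometric series via $1/(1-e^{-1/x}) \leq 1+x$.

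For the second inequality (bounding $D(X_\star\Vert X_0)$), you take a genuinely different and notably cleaner route. You observe that since $\nabla S_{\mathrm{FD}}(X_0) = -\beta(C - \mu\mathbf{I}_n)$, the optimality of $X_\star$ for $F_\beta(X) - \mu\Tr[X]$ compared against the competitor $X_0$ rearranges directly into
\[
D(X_\star\Vert X_0) \;\leq\; \beta\bigl[\tilde E(X_0) - \tilde E(X_\star)\bigr] \;\leq\; \beta\,\tilde E(X_0),
\]
and then you bound $\tilde E(X_0) \leq \tfrac{1}{2}\,c_\mathrm{h}\,\Tr[X_0]$ via the H\"older-type inequality $\rho^\top V\rho \leq \Vert\rho\Vert_\infty\Vert V\Vert_\infty\Vert\rho\Vert_1$ and the identity $\Vert\rho(X_0)\Vert_1 = \Tr[X_0]$ (valid since $\rho(X_0) \geq 0$ and $\Psi^\top\Psi = \mathbf{I}_n$). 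This is a valid derivation and much more elementary than the paper's, which instead bounds $-S_{\mathrm{FD}}(X_0)$ and $\langle C, X_\star - X_0\rangle$ separately by explicit eigenvalue-sum estimates driven by Assumption \ref{assump:growth}, the Lipschitz property of $x\,f_\beta(x)$, and Weyl monotonicity.

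However, your closing claim that the resulting bound matches the paper's up to ``mild slack accounting'' is not correct. Your chain gives
\[
D(X_\star\Vert X_0) \;\leq\; \tfrac{\beta c_\mathrm{h}}{2}\bigl[c_\lambda(\mu + \beta^{-1}) + 1\bigr] \;=\; \tfrac{\beta c_\mathrm{h} c_\lambda \mu}{2} + \tfrac{c_\mathrm{h} c_\lambda}{2} + \tfrac{\beta c_\mathrm{h}}{2},
\]
and the final term $\beta c_\mathrm{h}/2$ (coming from the ``$+1$'' in the trace bound) has no $\mu$ or $\beta^{-1}$ prefactor, so it cannot be absorbed into the paper's $c_\lambda[(2\beta c_\mathrm{h}+1)\mu + 4\beta^{-1}] + 2$. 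In the regime $\mu \to 0$, $\beta\to\infty$, the paper's bound stays $O(1)$ while yours grows like $\beta c_\mathrm{h}$, so these are genuinely different bounds, not the same bound with different constants. That said, your weaker bound is still $O(\beta c_\mathrm{h}\, n_\mathrm{eff})$ with $n_\mathrm{eff} = 1 + c_\lambda(\mu + \beta^{-1})$, which upon taking square roots in Theorem \ref{thm:convergence-1} still recovers the $(1+\sqrt{\beta c_\mathrm{h}})\,n_\mathrm{eff}$ dependence asserted in Corollary \ref{cor:convergence}, so the downstream asymptotic conclusion survives. You should either present your bound as is, acknowledging it is quantitatively looser for small $\mu$, or adopt the paper's finer eigenvalue accounting if you want the stronger statement.
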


These two lemmas, together with Theorem \ref{thm:convergence-1},
immediately imply the following.
\begin{cor}
\label{cor:convergence} Suppose that Assumption \ref{assump:growth}
holds with $\alpha=1$. Also suppose $\beta\geq1$. Under the same
hypotheses as in the statement of Theorem \ref{thm:convergence-1},
the inequality 
\[
\frac{1}{T}\sum_{t=0}^{T-1}\left(F_{\beta}(X_{t})-\mu\Tr[X_{t}]\right)\leq F_{\beta}(X_{\star})-\mu\Tr[X_{\star}]+O\left(\frac{\log(Tm/\delta)\,(1+\sqrt{\beta c_{\mathrm{h}}})\,\tilde{c}_{\mathrm{h}}\,n_{\mathrm{eff}}}{\sqrt{T}}\right)
\]
 holds with probability at least $1-\delta$, where $n_{\mathrm{eff}}:=1+c_{\lambda}[\mu+\beta^{-1}]$.
\end{cor}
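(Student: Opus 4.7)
The plan is to derive Corollary \ref{cor:convergence} by direct substitution of the bounds provided by Lemma \ref{lem:TrX0} and Lemma \ref{lem:DXstarX0} into the error bound furnished by Theorem \ref{thm:convergence-1}, followed by elementary simplification. Since the theorem already delivers a high-probability bound of the required form in terms of the abstract quantities $\tau := \max(\Tr[X_0], 1)$ and $D(X_\star\Vert X_0)$, what remains is to control these two quantities by $n_{\mathrm{eff}} = 1 + c_\lambda[\mu + \beta^{-1}]$ (up to the relevant factors of $\beta$ and $c_{\mathrm{h}}$).

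First I would bound $\tau$. By Lemma \ref{lem:TrX0} we have $\Tr[X_0] \leq c_\lambda(\mu + \beta^{-1}) + 1$, so $\tau \leq c_\lambda(\mu + \beta^{-1}) + 1 = n_{\mathrm{eff}}$. Next, Lemma \ref{lem:DXstarX0} yields
\[
D(X_\star \Vert X_0) \leq c_\lambda[(2\beta c_{\mathrm{h}} + 1)\mu + 4\beta^{-1}] + 2
= 2\beta c_{\mathrm{h}}\,(c_\lambda \mu) + (c_\lambda \mu) + 4(c_\lambda \beta^{-1}) + 2.
\]
Since by definition $c_\lambda \mu \leq n_{\mathrm{eff}}$ and $c_\lambda \beta^{-1} \leq n_{\mathrm{eff}}$, this gives $D(X_\star \Vert X_0) = O((1 + \beta c_{\mathrm{h}})\,n_{\mathrm{eff}})$. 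The hypothesis $\beta \geq 1$ is only invoked here to justify absorbing the $O(1)$ additive constant into the $\beta c_{\mathrm{h}}$ term cleanly.

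Plugging back into the conclusion of Theorem \ref{thm:convergence-1}, we have $\sqrt{2\tau\,D(X_\star \Vert X_0)} = O\bigl(n_{\mathrm{eff}}\sqrt{1 + \beta c_{\mathrm{h}}}\bigr)$ and $2\tau = O(n_{\mathrm{eff}})$, so the coefficient $\sqrt{2\tau D(X_\star\Vert X_0)} + 2\tau = O\bigl((1 + \sqrt{\beta c_{\mathrm{h}}})\,n_{\mathrm{eff}}\bigr)$, using $\sqrt{1 + \beta c_{\mathrm{h}}} = O(1 + \sqrt{\beta c_{\mathrm{h}}})$. Combined with $c_{T,m,\delta} = O(\log(Tm/\delta))$, the overall error bound becomes
\[
O\!\left(\frac{\log(Tm/\delta)\,(1 + \sqrt{\beta c_{\mathrm{h}}})\,\tilde{c}_{\mathrm{h}}\,n_{\mathrm{eff}}}{\sqrt{T}}\right),
\]
which is precisely the stated conclusion. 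There is no real obstacle here: the corollary is purely a bookkeeping exercise showing that the abstract quantities controlling Theorem \ref{thm:convergence-1} can all be bounded in terms of the physically meaningful quantities $\beta$, $c_{\mathrm{h}}$, $\tilde{c}_{\mathrm{h}}$, and $n_{\mathrm{eff}}$, so that no explicit dependence on $\Vert C \Vert$ or $n$ remains in the complete basis set limit.
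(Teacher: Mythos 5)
Your proposal is correct and takes exactly the approach the paper intends — the paper states the corollary with the remark that it follows "immediately" from Lemmas \ref{lem:TrX0}--\ref{lem:DXstarX0} and Theorem \ref{thm:convergence-1}, and your calculation is precisely that substitution-and-simplify chain ($\tau \le n_{\mathrm{eff}}$, $D(X_\star\Vert X_0) = O((1+\beta c_{\mathrm{h}})n_{\mathrm{eff}})$, $\sqrt{1+\beta c_{\mathrm{h}}} = O(1+\sqrt{\beta c_{\mathrm{h}}})$). The one small inaccuracy is your explanation of the role of $\beta \ge 1$: the additive $O(1)$ constant in the $D(X_\star\Vert X_0)$ bound is absorbed harmlessly because $n_{\mathrm{eff}}\ge 1$, not because $\beta\ge 1$, so that hypothesis does not actually enter your calculation; this doesn't affect the validity of the argument.
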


\begin{rem}
Following Assumption \ref{assump:growth}, it is reasonable to think
of $n_{\mathrm{eff}}$ as something akin to an effective basis set
size which remains in proportion with the particle number. Importantly,
it remains proportional to the volume (i.e., proportional to $c_{\lambda}$)
in the thermodynamic and complete basis set limits (even taken simultaneously).
\end{rem}

\section{Numerical experiments \label{sec:Experiments}}

Now we describe several numerical experiments supporting our theory. 

We will consider functions on the box domain $\mathcal{D}:=\prod_{j=1}^{d}[0,L_{j}]^{d}$
where $d$ is the spatial dimension and $L_{1},\ldots,L_{d}$ are
the box dimensions, with periodic boundary conditions. We will consider
a Galerkin discretization in the periodic sinc (or planewave dual)
basis \cite{boyd_chebyshev_2001} for this periodic domain. This basis
is equivalent in its span to a suitable planewave basis, but the interpolating
property of the periodic sinc basis will allow us to represent diagonal
potentials more conveniently via a pseudospectral approximation \cite{boyd_chebyshev_2001}.

\subsection{Domain and basis \label{subsec:Domain-and-basis}}

Adopt the notation $[j]:=\{0,\ldots,j-1\}$ for arbitrary non-negative
integer $j$. 

Now let $n_{i}$ denote the number of grid points / basis functions
in dimension $i=1,\ldots,d$. Accordingly let $\mathcal{I}:=\prod_{i=1}^{d}[n_{i}]$
denote the indexing set for our computational grid, and define grid
points 
\[
\mathbf{x}_{\mathbf{j}}:=\mathbf{j}\Delta\mathbf{x}=(j_{1}\Delta x_{1},\ldots,j_{d}\Delta x_{d})
\]
 indexed by $\mathbf{j}\in\mathcal{I}$. Here $\Delta x_{i}=\frac{L_{i}}{n_{i}}$
is the mesh size in each dimension, $\Delta\mathbf{x}=(\Delta x_{1},\ldots,\Delta x_{d})$,
and $n_{i}$ is an \emph{odd }integer number of discretization points
per dimension.

It is also useful to define $\mathcal{V}:=\prod_{i=1}^{d}L_{i}$ (the
domain volume), $n=\prod_{i=1}^{d}n_{i}$ (the total number of grid
points / basis functions), $\mathbf{n}=(n_{1},\ldots,n_{d})$, and
$\Delta\mathcal{V}=\mathcal{V}/n$ (the discrete volume element).

Next, we define the planewaves planewaves adapted to this periodic
box by 
\[
e_{\mathbf{k}}(\mathbf{x}):=\frac{1}{\sqrt{\mathcal{V}}}e^{2\pi i\mathbf{k}\cdot(\mathbf{x}/\mathbf{L})},\quad\mathbf{k}\in\hat{\mathcal{I}},
\]
 where our dual indexing set $\hat{\mathcal{I}}$ is defined by 
\[
\hat{\mathcal{I}}:=\prod_{i=1}^{d}\{-\ell_{i},\ldots,\ell_{i}\},\quad\ell_{i}:=(n_{i}-1)/2
\]
 and vector quotients are interpreted elementwise. The set $\{e_{\mathbf{k}}\}_{\mathbf{k}\in\hat{\mathcal{I}}}$
is orthonormal with respect to the $L^{2}(\mathcal{D})$ inner product,
which we denote by $\left\langle \,\cdot\,,\,\cdot\,\right\rangle $.

Then the periodic sinc basis $\{\psi_{\mathbf{j}}\}_{\mathbf{j}\in\mathcal{I}}$
is defined by periodic translation of a reference basis function:
\[
\psi_{\mathbf{j}}(\mathbf{x})=\psi_{\mathbf{0}}(\mathbf{x}-\mathbf{j}\Delta\mathbf{x}),
\]
 where the reference is in turn defined as 
\[
\psi_{\mathbf{0}}(\mathbf{x}):=\frac{1}{\sqrt{\Delta\mathcal{V}}}\prod_{i=1}^{d}\frac{\sin(\pi m_{i}x_{i}/L_{i})}{m_{i}\sin(\pi x_{i}/L_{i})}
\]
 and vector products are interpreted elementwise.

The set $\{\psi_{\mathbf{j}}\}_{\mathbf{j}\in\mathcal{I}}$ is also
orthonormal with respect to the $L^{2}(\mathcal{D})$ inner product.
Moreover, these functions can be written in terms of planewaves via
the unitary transformation: 
\[
\psi_{\mathbf{j}}=\frac{1}{\sqrt{n}}\sum_{\mathbf{k}\in\hat{\mathcal{I}}}e^{-2\pi i\mathbf{k}\cdot(\mathbf{j}/\mathbf{n})}e_{\mathbf{k}}.
\]

Finally, $\{\psi_{\mathbf{j}}\}$ can be rescalled to obtain a basis
\[
\phi_{\mathbf{j}}:=\psi_{\mathbf{j}}\sqrt{\Delta\mathcal{V}},
\]
 which is an interpolating basis for the grid $\{x_{\mathbf{j}}\}$
in the sense that 
\[
f(x)=\sum_{\mathbf{j}\in\mathcal{I}}f(\mathbf{x}_{\mathbf{j}})\,\phi_{\mathbf{j}}(x)
\]
 for $f$ lying in the span of trigonometric polynomials (suitably
scaled to our periodic domain) of multivariate order up to $\left(\frac{n_{1}-1}{2},\ldots,\frac{n_{d}-1}{2}\right)$.
In particular, our periodic sinc basis includes all such planewaves
in its complex span.

\subsection{Single-particle matrix \label{subsec:Single-particle-matrix}}

The single-particle matrix (cf. Section \ref{subsec:Single-electron-term})
\[
C=K+U
\]
 is constructed from the discretizations of two terms: (1) the kinetic
term and (2) the diagonal external potential $v_{\mathrm{ext}}$.

The kinetic matrix $K$ is produced by direct Galerkin projection.
We have 
\[
K_{\mathbf{j},\mathbf{j}'}=-\frac{1}{2}\int_{\mathcal{D}}\psi_{\mathbf{j}}(x)\Delta\psi_{\mathbf{j}'}(x)\,dx.
\]
 Now the Laplacian is translation-invariant, so $K_{\mathbf{j},\mathbf{j}'}=K_{\mathbf{j}-\mathbf{j}',0}$
is determined by a single row (or column) and can be diagonalized
by the $d$-dimensional unitary discrete Fourier transform $\mathcal{F}$
(with dual indexing set $\hat{\mathcal{I}}$). One can compute: 
\[
K=\frac{1}{2}\mathcal{F}D\mathcal{F}^{*}
\]
 where $D=\mathrm{diag}(d)$ is a diagonal matrix, defined by 
\[
d_{\mathbf{k}}=\sum_{i=1}^{d}(2\pi k_{i}/L_{i})^{2},\quad\mathbf{k}\in\hat{\mathcal{I}}.
\]

Meanwhile, the external potential is discretized within the pseudospectral
approximation \cite{boyd_chebyshev_2001} via $U=\mathrm{diag}(u)$
where 
\[
u_{\mathbf{j}}=v_{\mathrm{ext}}(\mathbf{x}_{\mathbf{j}}),\quad\mathbf{j}\in\mathcal{I}.
\]

\subsection{Hartree contribution \label{subsec:Hartree-contribution}}

It remains to describe how the ERI (\ref{eq:ERI0}) are constructed.
In our case, the `interpolating basis' size $m$ (cf. Section \ref{subsec:Electron-repulsion-integrals})
coincides with the basis set size $n$.

We follow the construction and notation outlined in Section \ref{subsec:Electron-repulsion-integrals},
only changing the indexing notation to coincide with our multi-indexing
convention. Then to specify the ERI (\ref{eq:ERI0}), we need to fix
$\Psi_{\mathbf{p}\mathbf{j}}$ (an $n\times n$ unitary matrix) as
well as $V_{\mathbf{p}\mathbf{q}}$ (an $n\times n$ positive semidefinite
matrix).

It is elementary to verify that the interpolating basis weights $w_{\mathbf{p}}=\int_{\mathcal{D}}\phi_{\mathbf{p}}(x)\,dx$
are all given by $w_{\mathbf{p}}=\Delta\mathcal{V}$. Moreover, 
\[
\Psi_{\mathbf{p}\mathbf{j}}:=\psi_{\mathbf{j}}(x_{\mathbf{p}})\sqrt{w_{\mathbf{p}}}=\delta_{\mathbf{p}\mathbf{j}}
\]
 is the identity (hence in particular unitary without need for correction),
and finally, still following Section \ref{subsec:Electron-repulsion-integrals},
we take

\[
V_{\mathbf{p}\mathbf{q}}=\frac{1}{\Delta\mathcal{V}}\int_{\mathcal{D}}\psi_{\mathbf{p}}(x)\,v_{\mathrm{ee}}(x-y)\,\psi_{\mathbf{q}}(y)\,dx\,dy.
\]

Now $V$, like $K$, is diagonalized by the unitary discrete Fourier
transform, and we can write 
\[
V=\frac{1}{\Delta\mathcal{V}}\,\mathcal{F}\,\hat{V}\,\mathcal{F}^{*},
\]
 where $\hat{V}=\mathrm{diag}(\hat{v})$.

To implement the Yukawa interaction, we define $\hat{v}$ as 

\[
\hat{v}_{\mathbf{k}}=\frac{\alpha^{2}}{\alpha^{2}+\sum_{i=1}^{d}(2\pi k_{i}/L_{i})^{2}}
\]
 The Coulomb interaction can be recovered by setting $\alpha=0$ and
setting $\hat{v}_{\mathbf{0}}=0$. (Note that more generally, the
$\mathbf{k}=0$ mode can be removed, since this will simply contributed
a constant shift to the Hartree potential.)

Importantly, note that for this choice of basis many of the general
constructions outlined above simplify intuitively. First, we have
simply 
\[
\rho(X)=\mathrm{diag}(X),
\]
 and in turn 
\[
\tilde{E}(X)=\frac{1}{2}\rho(X)^{\top}V\rho(X),\quad\nabla\tilde{E}(X)=\mathrm{diag}^{*}\left[V\rho(X)\right].
\]

Meanwhile, the gradient estimator can be constructed by first defining
\[
\hat{\rho}_{t}=(X_{t}^{1/2}z_{t})^{\odot2}
\]
 and then 
\[
\tilde{G}_{t}=\mathrm{diag}^{*}\left[V\hat{\rho}_{t}\right].
\]

The computational difficulties lie in (1) the matrix-vector multiplication
$X_{t}^{1/2}z_{t}$, which we implement via the contour method (cf.
Appendix \ref{app:contour}) that is robust to both the thermodynamic
and complete basis set limits and (2) the matrix-vector multiplication
$V\hat{\rho}_{t}$, which we implement via FFTs. (Note that to implement
the contour approach, we must in particular call fast matrix-vector
multiplications by $H_{t}$, which can themselves be reduced to diagonal
matrix operations via FFTs.)

\subsection{Numerical results\label{subsec:Numerical-results}}

We conduct experiments running the mirror descent (\ref{eq:hamiltonianupdate})
for the periodic discretization outlined above. In particular, we
examine the algorithm\textquoteright s robustness by varying the box
size, grid resolution, inverse temperature, and spatial dimension.
All of our numerical experiments are implemented in JAX, and the code
is publicly available at 
\[
\texttt{https://github.com/willcai7/MirrorDescent-DFT}
\]

\textbf{Hardware. }Each experiment was carried out using a single
Nvidia A100 GPU, equipped with 80GB of memory.

\textbf{Problem specification. }Our objective function consists of
several terms. We set the Yukawa parameter to $\alpha=0.5$ to define
the Hartree contribution. To generate the external potential, we first
create a random background charge distribution by uniformly sampling
$\lfloor\zeta\mathcal{\mathcal{V}}\rfloor$ points from our grid and
assigning them each an equal unit `charge.' The external potential
is then given by $v_{\text{ext}}=-V\rho_{\text{ext}}$, where $V$
is the discretized Yukawa kernel defined above with $\alpha=0.5$.
In our simulations, we fix $\zeta=1$ and set the chemical potential
to $\mu=0$. We always choose a cubic domain, i.e., $\mathbf{L}=L\mathbf{1}_{d}$
for varying $L$.

\textbf{Mirror descent. }Our simulations adopt an exponential decay
schedule for the stpe size, given by 
\[
\gamma_{t}=\gamma\cdot\exp(-t/1000).
\]
For most simulations, we use a step-size of $\gamma=1$; however,
when $\beta=0.5$, we reduce the step-size to $\gamma=0.5$. The reason
for deviating from the theoretical prescription for the step size
is that in the theory it is more convenient to consider a finite time
horizon that is specified \emph{a priori}. But in practice, it is
easier to manage an infinite time horizon with a decay schedule. We
chose the exponential schedule for its robust practical performance.
Similar considerations often apply in analysis of other mirror descent
algorithms. We run the mirror descent for a maximum of 5000 iterations,
and we initialize with $H_{0}=C-\mu I$.

\textbf{Contour method. }To estimate the gradient for the Hartree
contribution in the objective, we employ the contour method outlined
in Appendix \ref{app:contour}. We refer to a single approximate matrix-vector
multiplication by $f_{\beta}^{1/2}(H_{t})$ as a `contour matvec.'
We set the number of poles to 20. Within this method, we use the BiCGSTAB
linear solver \cite{van1992bi} with a maximum of 1000 iterations
and a tolerance of $10^{-5}$ to solve the linear systems. The preconditioner
is chosen as in Appendix \ref{app:precondition}. Unlike our theoretical
analysis, which uses a single Gaussian sample to define the gradient
estimator in each iteration, our experiments perform an empirical
average over $N_{g}$ Gaussian samples. This allows us to exploit
GPU parallelism for the implementation of a batch of $N_{g}$ contour
matvecs. We typically use $N_{g}=20$, except for our largest experiments
where we use $N_{g}=10$ (cf. Table \ref{tab:time}). We denote the
batch of Gaussian samples at each iteration as $z_{t}\in\R^{n\times N_{g}}$,
where each column of $z_{t}$ represents a single Gaussian sample.

\textbf{Evaluation. }We assess convergence by comparing our results
against the final output of the deterministic SCF algorithm (for problems
that are small enough such that it is tractable) which serves as our
ground truth. When the exact ground truth is available, we report
the relative errors of the density function along with the absolute
errors in the free energy density, Hartree energy density, and number
of electrons per volume. Additionally, we define a \textquotedblleft \textbf{gold
standard}\textquotedblright{} method as a benchmark for evaluating
the convergence of the density functions. The gold standard assumes
access to the exact optimizer $X_{\star}$ and then estimates the
density using the same Gaussian samples $z_{t}$, i.e., it completely
avoids the optimization. As such it is not supposed to define a realistic
scalable algorithm. Rather, we wish to demonstrate that our scalable
algorithm performs similarly to this unrealistic gold standard. Specifically,
the gold standard defines the density by 
\[
\hat{\rho}_{\text{gold},t}=\frac{\text{diag\ensuremath{\big[}\ensuremath{\ensuremath{\sum_{i=1}^{t}}}}\big(X_{\star}^{1/2}z_{t})\cdot(X_{\star}^{1/2}z_{t})^{\top}\big]}{N_{g}\cdot t}.
\]

\textbf{Numerical results.} Figures \ref{fig:1D-limits}, \ref{fig:1D-beta},
\ref{fig:2D-limits}, \ref{fig:2D-beta}, \ref{fig:3D-limits}, and
\ref{fig:3D-beta} display the external potentials, final density
functions, relative density errors, Hartree energy densities, free
energy densities, and electrons per volume across various dimensions,
inverse temperatures, grid resolutions, and box sizes. In all plots,
at each iteration we average the results from the latter half of the
mirror descent iterations completed thus far, in order to define asymptotically
consistent estimators. Several key observations emerge:
\begin{itemize}
\item \textbf{Robustness in two limits.} The mirror descent algorithm shows
strong performance in both the thermodynamic and complete basis set
limits. In Figures \ref{fig:1D-limits}, \ref{fig:2D-limits}, and
\ref{fig:3D-limits}, subplots (a) and (b) highlight its robustness
in the thermodynamic limit, while subplots (b) and (c) confirm its
stability in the complete basis limit.
\item \textbf{Robustness with respect to inverse temperatures and dimensions.}
Figures \ref{fig:1D-beta}, \ref{fig:2D-beta}, and \ref{fig:3D-beta}
support the algorithm\textquoteright s resilience against variations
in inverse temperature, and all figures collectively demonstrate consistent
performance across different dimensions.
\item \textbf{Comparable performance to the \textquotedblleft gold standard\textquotedblright{}
method.} Across all settings, mirror descent performs nearly on par
with the \textquotedblleft gold standard\textquotedblright{} method,
implying that its computational complexity is almost equivalent to
that of estimating the density given the converged optimizer.
\end{itemize}
Furthermore, we applied mirror descent to two large 3D systems, as
shown in Figure \ref{fig:large}. For a grid size of $\mathbf{n}=(101,101,101)$,
the density matrix exceeds $10^{6}\times10^{6}$ in size. While deterministic
SCF becomes intractable at this scale, our stochastic mirror descent
approach converges as efficiently as it does for smaller systems.
This result highlights the capability of stochastic mirror descent
to handle much larger systems than SCF.

\textbf{Wall clock. }Table \ref{tab:time} provides a comprehensive
summary of the scaling of the average wall clock time $T_{\text{vec}}$
of a batch of $N_{g}$ contour matvecs. The averages are computed
by taking an empirical average over the optimization trajectory with
a stride length of 20 iterations. We also report $T_{\text{vec}}/n$
for all simulations. (We report the time for a batch of contour matvecs
rather than individual ones, since the batched BiCGSTAB solver in
JAX exhibits nonlinear scaling with respect to the number $N_{g}$
of Gaussian samples, due to parallelism.) Several key observations
can be made from these results:
\begin{itemize}
\item \textbf{Scaling with inverse temperature:} The data for the simulations
in Figures \ref{fig:1D-beta}, \ref{fig:2D-beta}, and \ref{fig:3D-beta}
suggest that the average contour matvec time scales as $O(\sqrt{\beta})$
with respect to the inverse temperature $\beta.$ Although we cannot
prove this aspect of the scaling theoretically for the contour method,
it matches the scaling conjectured in the introduction.
\item \textbf{Complete basis set limit:} The data for the simulations in
the (a) and (b) subplots of Figures \ref{fig:1D-limits}, \ref{fig:2D-limits},
and \ref{fig:3D-limits} reveal that the average contour matvec time
scales close to $O(n)$ with respect to the number $n$ of grid points
as the grid spacing is refined for a fixed volume. This validates
our notion of optimal scaling in the complete basis set limit.
\item \textbf{Scaling with box size:} The data for simulations in the (b)
and (c) subplots of Figures \ref{fig:1D-limits}, \ref{fig:2D-limits},
and \ref{fig:3D-limits}, along with those in (a) and (b) of Figure
\ref{fig:large}, demonstrate that the average contour matvec time
is largely insensitive to increases in the box size.
\end{itemize}
Overall, these numerical results underscore the efficiency, robustness,
and scalability of the mirror descent algorithm. The favorable scaling
with respect to inverse temperature, grid resolution, and box size
not only highlight the method\textquoteright s computational efficiency
but also its potential for application in large-scale simulations
where time complexity is a critical factor.

{\bfseries{}
\begin{table}
\centering{}%
\begin{tabular}{|c|c|c|c|c|c|c|c|}
\hline 
Figure & $\mathbf{n}$ & $\mathbf{L}$ & $\beta$ & $\gamma$ & $N_{g}$ & $T_{\text{vec}}$ (s) & $T_{\text{vec}}/n$ (s)\tabularnewline
\hline 
\hline 
\ref{fig:1D-limits}.(a) & 6 & 10 & 10 & 1.0 & $20$ & 0.0139 & $2.32\times10^{-3}$\tabularnewline
\hline 
\ref{fig:1D-limits}.(b) & 1281 & 10 & 10 & 1.0 & 20 & 0.0229 & $1.79\times10^{-5}$\tabularnewline
\hline 
\ref{fig:1D-limits}.(c) & 12801 & 100 & 10 & 1.0 & 20 & 0.4565 & $3.57\times10^{-5}$\tabularnewline
\hline 
\ref{fig:1D-beta}.(a) & 12801 & 100 & 0.5 & 0.5 & 20 & 0.0961 & $7.51\times10^{-6}$\tabularnewline
\hline 
\ref{fig:1D-beta}.(b) & 12801 & 100 & 2 & 1.0 & 20 & 0.1971 & $1.54\times10^{-5}$\tabularnewline
\hline 
\ref{fig:1D-beta}.(c) & 12801 & 100 & 40 & 1.0 & 20 & 0.7206 & $5.63\times10^{-5}$\tabularnewline
\hline 
\ref{fig:2D-limits}.(a) & (51,51) & (10,10) & 10 & 1.0 & 20 & 0.0255 & $9.80\times10^{-5}$\tabularnewline
\hline 
\ref{fig:2D-limits}.(b) & (101,101) & (10,10) & 10 & 1.0 & 20 & 0.0807 & $7.91\times10^{-5}$\tabularnewline
\hline 
\ref{fig:2D-limits}.(c) & (101,101) & (100,100) & 10 & 1.0 & 20 & 0.1844 & $1.81\times10^{-5}$\tabularnewline
\hline 
\ref{fig:2D-beta}.(a) & (101,101) & (100,100) & 0.5 & 0.5 & 20 & 0.0614 & $6.02\times10^{-6}$\tabularnewline
\hline 
\ref{fig:2D-beta}.(b) & (101,101) & (100,100) & 2 & 1.0 & 20 & 0.0946 & $9.27\times10^{-6}$\tabularnewline
\hline 
\ref{fig:2D-beta}.(c) & (101,101) & (100,100) & 40 & 1.0 & 20 & 0.5438 & $5.33\times10^{-5}$\tabularnewline
\hline 
\ref{fig:3D-limits}.(a) & (11,11,11) & (10,10,10) & 10 & 1.0 & 20 & 0.0189 & $1.42\times10^{-5}$\tabularnewline
\hline 
\ref{fig:3D-limits}.(b) & (21,21,21) & (10,10,10) & 10 & 1.0 & 20 & 0.0471 & $5.08\times10^{-6}$\tabularnewline
\hline 
\ref{fig:3D-limits}.(c) & (21,21,21) & (30,30,30) & 10 & 1.0 & 20 & 0.0598 & $6.46\times10^{-6}$\tabularnewline
\hline 
\ref{fig:3D-beta}.(a) & (21,21,21) & (30,30,30) & 0.5 & 0.5 & 20 & 0.0341 & $3.68\times10^{-6}$\tabularnewline
\hline 
\ref{fig:3D-beta}.(b) & (21,21,21) & (30,30,30) & 2 & 1.0 & 20 & 0.0418 & $4.51\times10^{-6}$\tabularnewline
\hline 
\ref{fig:3D-beta}.(c) & (21,21,21) & (30,30,30) & 40 & 1.0 & 20 & 0.0929 & $1.00\times10^{-5}$\tabularnewline
\hline 
\ref{fig:large}.(a) & (101,101,101) & (10,10,10) & 10 & 1.0 & 10 & 3.4839 & $3.38\times10^{-6}$\tabularnewline
\hline 
\ref{fig:large}.(b) & (101,101,101) & (100,100,100) & 10 & 1.0 & 10 & 5.2709 & $5.12\times10^{-6}$\tabularnewline
\hline 
\end{tabular}\caption{This table shows the average time $T_{\text{vec}}$ of a batch of
$N_{g}$ contour matvecs, as well as $T_{\text{vec}}/n$, for all
simulations. During each simulation, timings were recorded every 20
iterations and then aggregated to compute averages. \label{tab:time}}
\end{table}
}

\begin{figure}
\centering{}\includegraphics[bb=90bp 70bp 505bp 812bp,clip,width=0.65\textwidth]{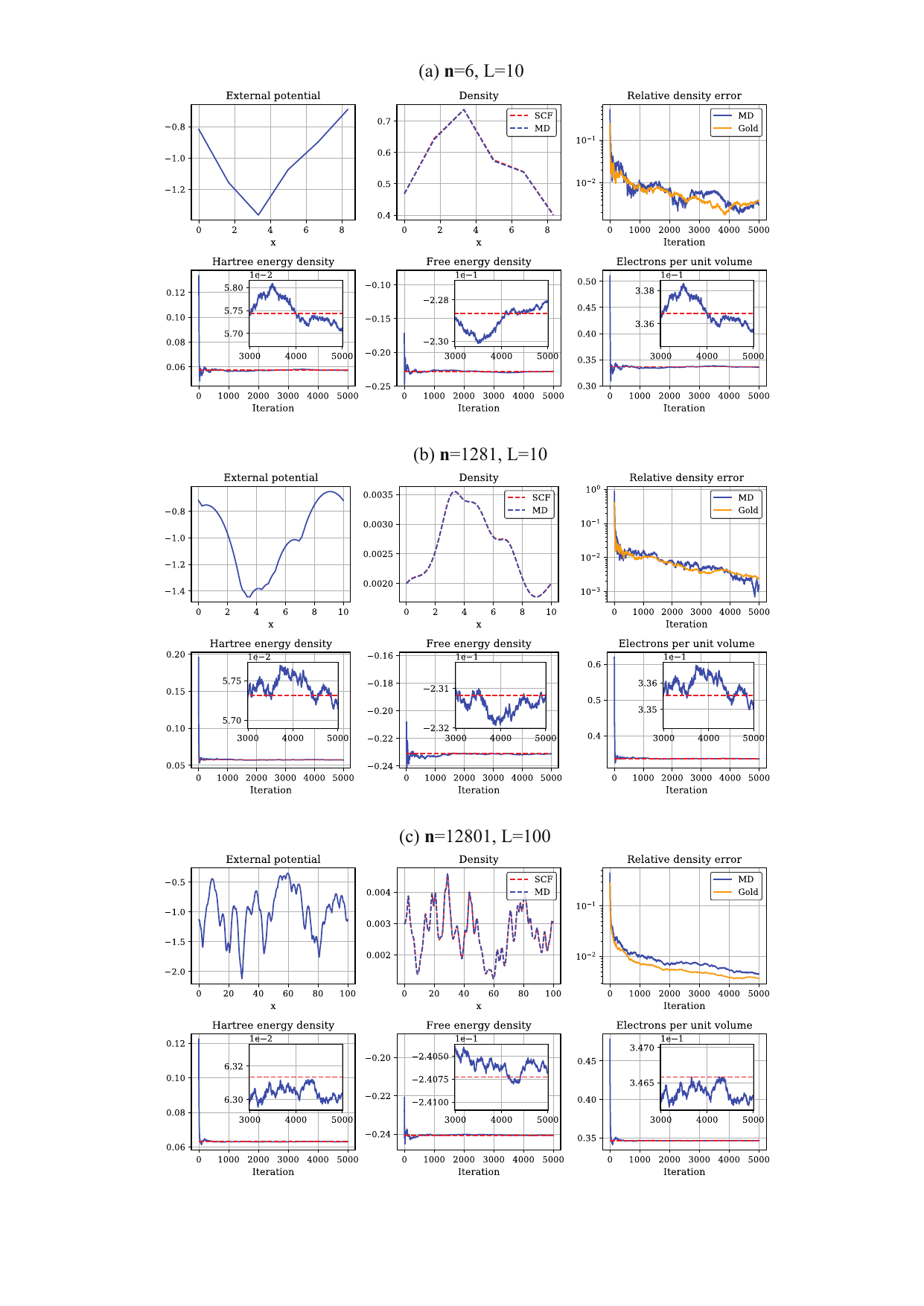}\caption{1D simulation results of the mirror descent algorithm. Here we fix
$\beta=10$ and change the box size $\mathbf{L}$ and the grid size
$\mathbf{n}$. The blue and gold lines denote the results of the mirror
descent and the gold standard, while the red dashed lines denote the
final outputs of the deterministic SCF. \label{fig:1D-limits}}
\end{figure}

\begin{center}
\begin{figure}
\begin{centering}
\includegraphics[bb=90bp 80bp 505bp 812bp,clip,width=0.65\textwidth]{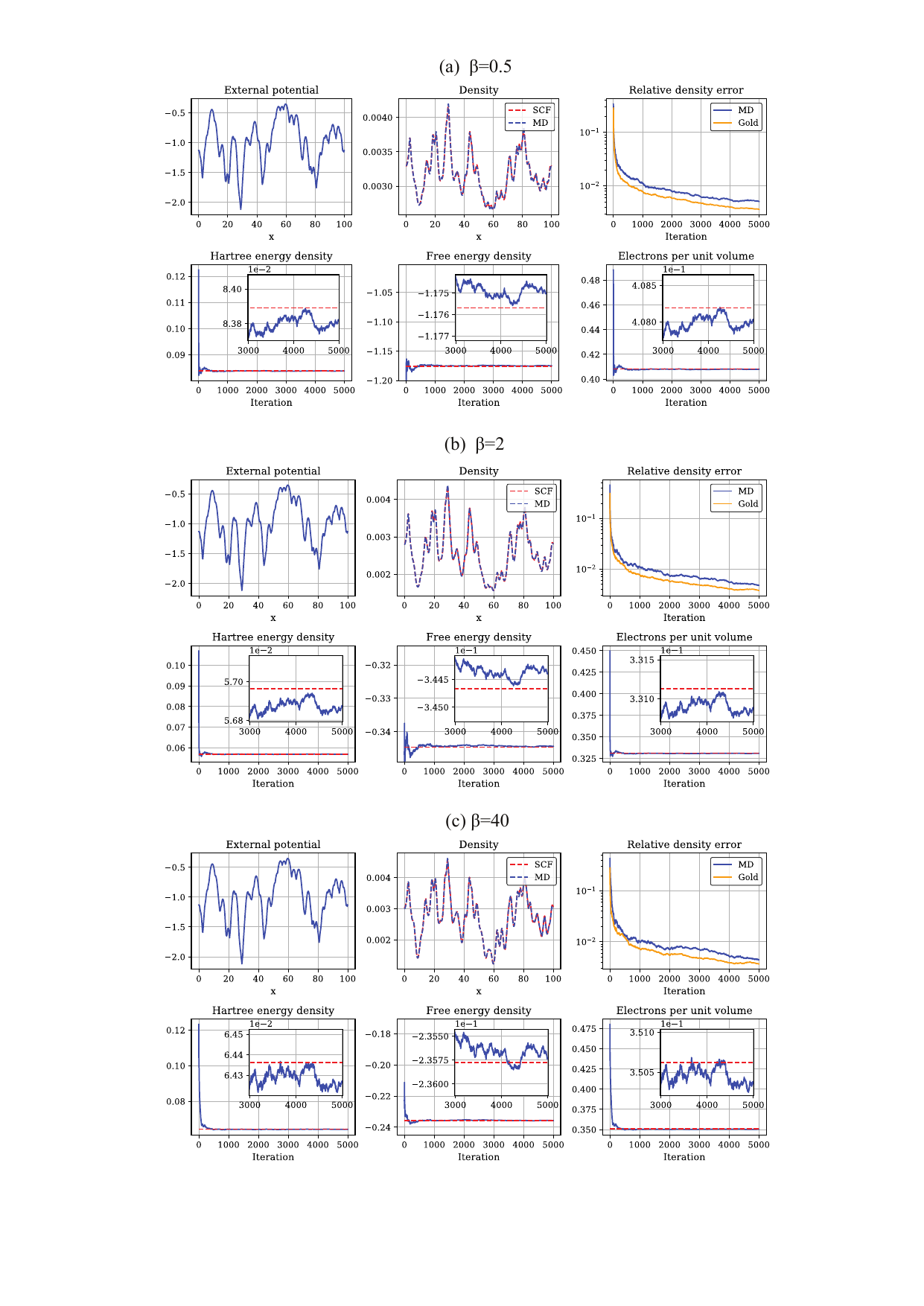}\caption{1D simulation results of the mirror descent algorithm. We fix the
grid size to be $\mathbf{n}=12801$ and the box size to be $\mathbf{L}=100$.
We consider $\beta=0.5,2,40$. The other settings are the same as
in Figure \ref{fig:1D-limits}. \label{fig:1D-beta}}
\par\end{centering}
\end{figure}
\begin{figure}
\begin{centering}
\includegraphics[bb=75bp 240bp 520bp 742bp,clip,width=1\textwidth]{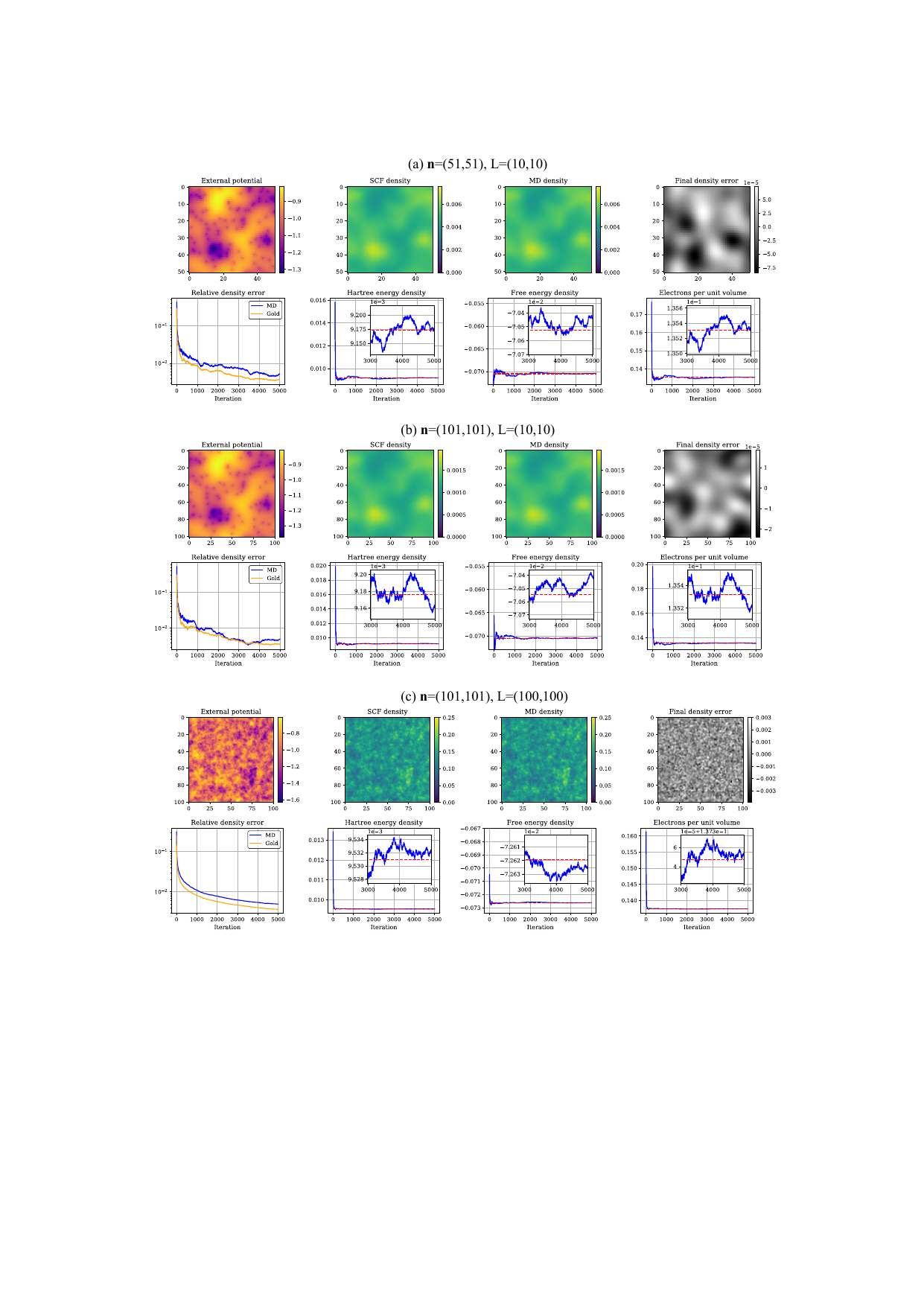}
\par\end{centering}
\centering{}\caption{2D simulation results of the mirror descent algorithm. We fix $\beta=10$
and change the box size $\mathbf{L}$ and grid size $\mathbf{n}$
as indicated in the subplots. We use the red heatmap to show the external
potential, the green heatmaps to show the density functions, and the
gray heatmap to show the absolute error of density functions. The
blue and gold lines denote the results of the mirror descent and the
gold standard, while the red dashed lines denote the final outputs
of the deterministic SCF. \label{fig:2D-limits}}
\end{figure}
\begin{figure}

\centering{}\includegraphics[bb=75bp 230bp 520bp 742bp,clip,width=1\textwidth]{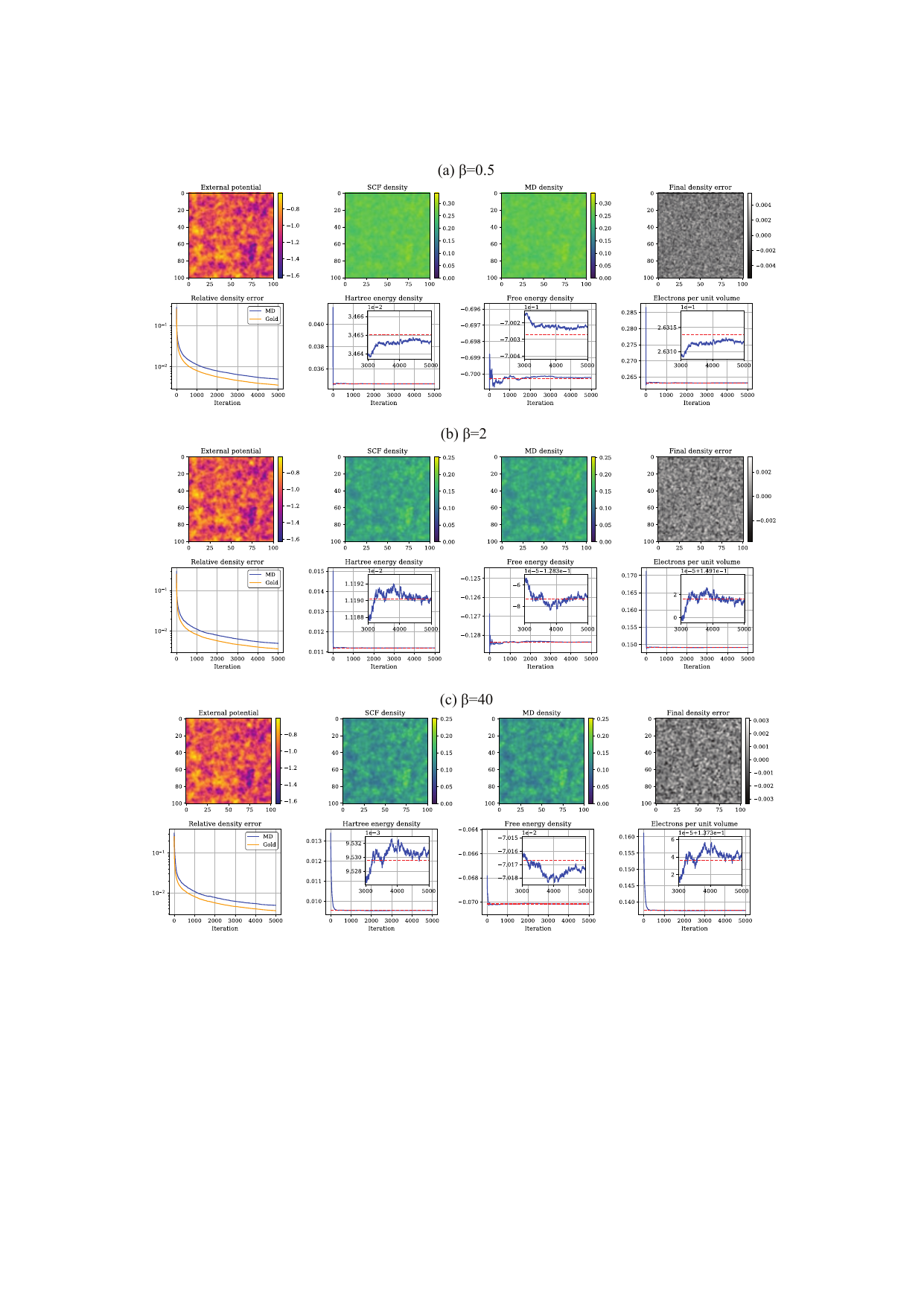}\caption{2D simulation results of the mirror descent algorithm. We fix the
grid size to be $\mathbf{n}=(101,101)$ and the box size to be $\mathbf{L}=(100,100)$.
We consider $\beta=0.5,2,40$. The other settings are the same as
in Figure \ref{fig:2D-limits}. \label{fig:2D-beta}}
\end{figure}
\begin{figure}
\centering{}\includegraphics[bb=75bp 230bp 520bp 752bp,clip,width=1\textwidth]{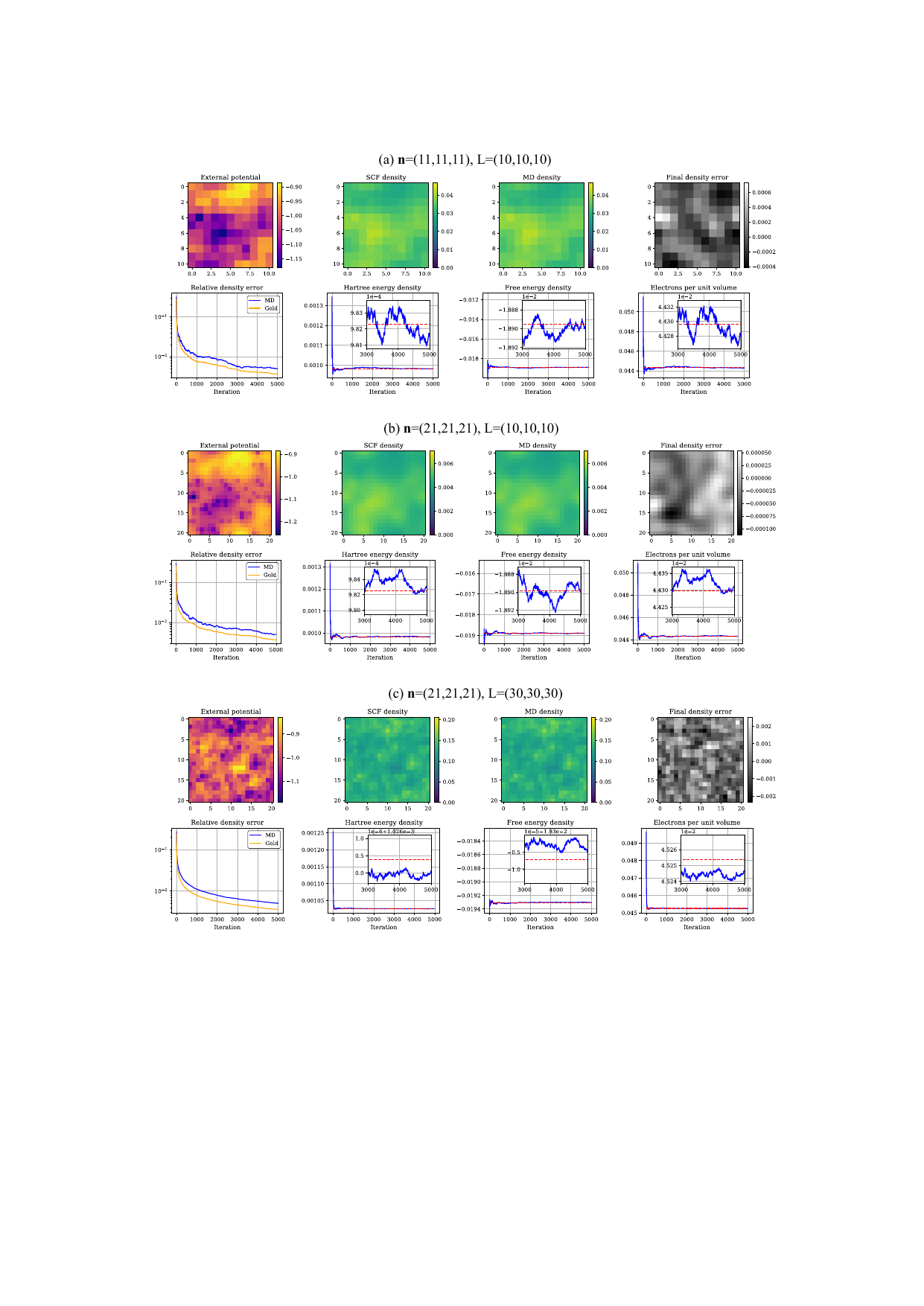}\caption{3D simulation results of the mirror descent algorithm. Here we fix
$\beta=10$ and change the box size $\mathbf{L}$ and grid size $\mathbf{n}$
as indicated in the subplots. For the first row of each subplot, we
show a slice of the external potential, density function, and the
density errors. The other settings are the same as Figure \ref{fig:2D-limits}.
\label{fig:3D-limits} }
\end{figure}
\begin{figure}
\centering{}\includegraphics[bb=75bp 190bp 520bp 712bp,clip,width=1\textwidth]{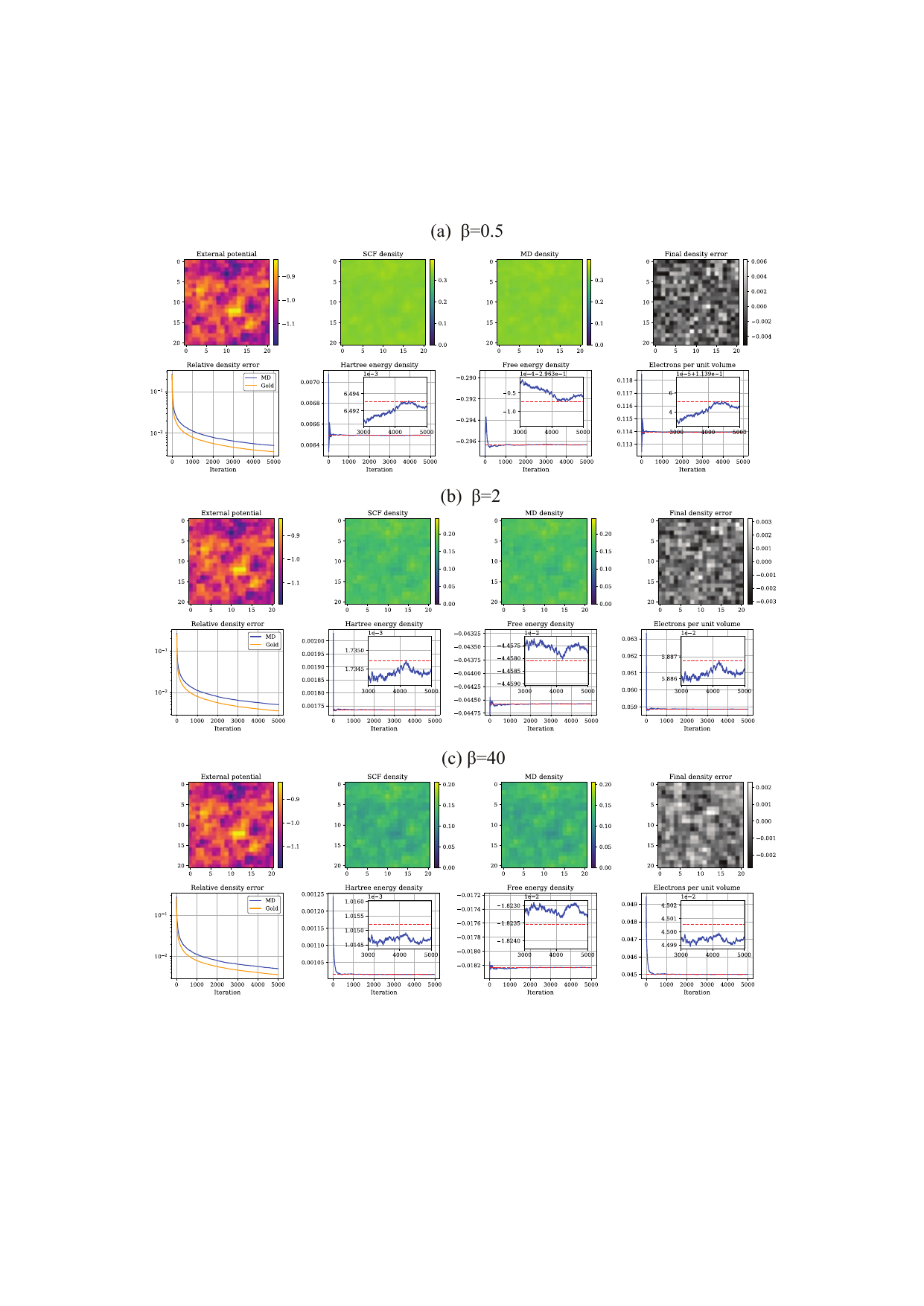}\caption{3D simulation results of the mirror descent algorithm. We fix the
grid size to be $\mathbf{n}=(21,21)$ and the box size to be $\mathbf{L}=(30,30)$.
We consider $\beta=0.5,2,40$. The other settings are the same as
in Figure \ref{fig:3D-limits}. \label{fig:3D-beta}}
\end{figure}
\begin{figure}
\centering{}\includegraphics[bb=75bp 190bp 520bp 702bp,clip,width=1\textwidth]{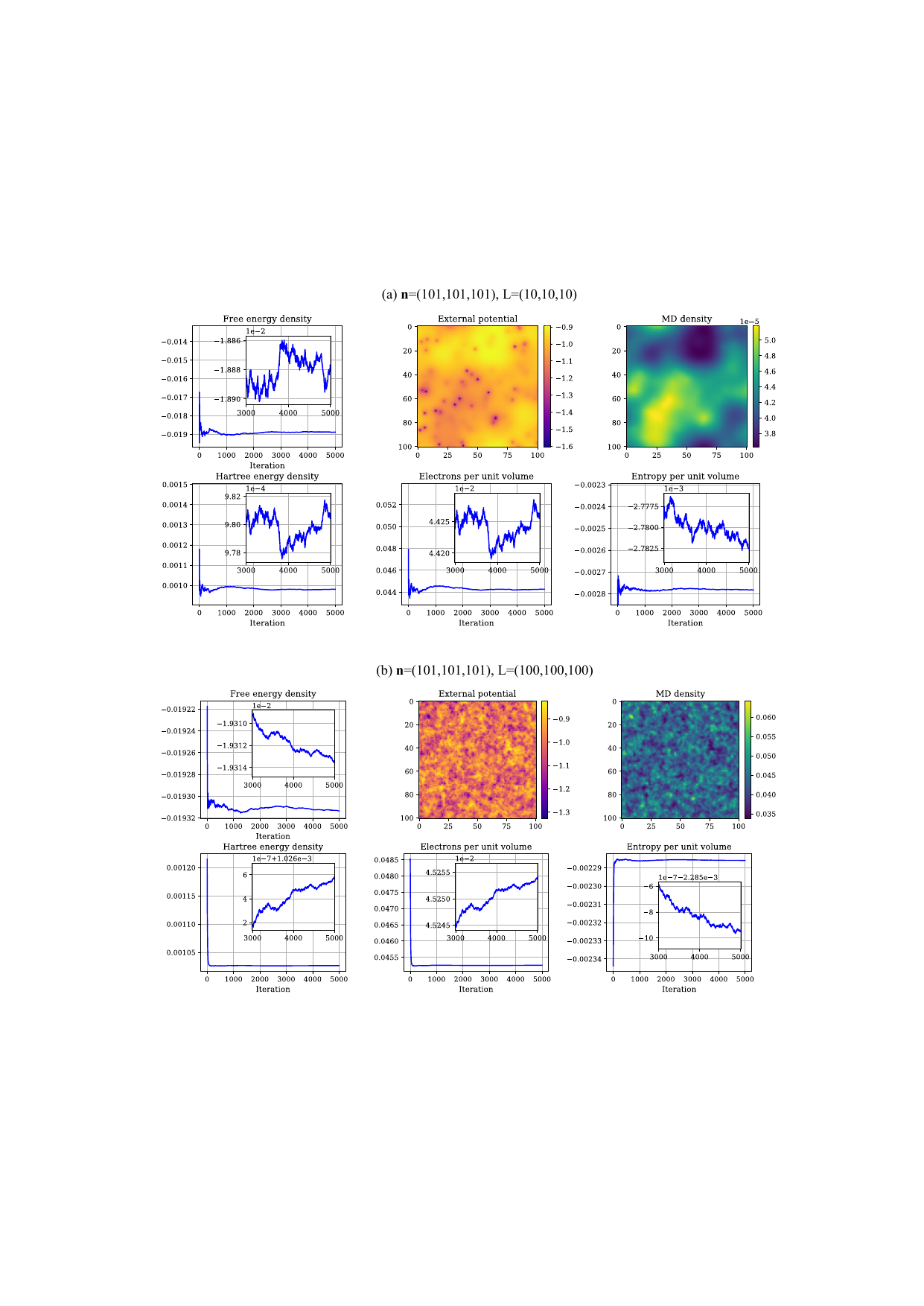}\caption{3D simulation results of the mirror descent algorithm for two large
systems. Here $\beta=10$ and $\mathbf{n}=(101,101,101)$. \label{fig:large} }
\end{figure}
\par\end{center}

\bibliographystyle{plain}
\bibliography{sdft}

\begin{thebibliography}{10}

\bibitem{PhysRevLett.111.106402}
Roi Baer, Daniel Neuhauser, and Eran Rabani.
\newblock Self-averaging stochastic kohn-sham density-functional theory.
\newblock {\em Phys. Rev. Lett.}, 111:106402, Sep 2013.

\bibitem{beck2003mirror}
Amir Beck and Marc Teboulle.
\newblock Mirror descent and nonlinear projected subgradient methods for convex optimization.
\newblock {\em Operations Research Letters}, 31(3):167--175, 2003.

\bibitem{beck2009fast}
Amir Beck and Marc Teboulle.
\newblock A fast iterative shrinkage-thresholding algorithm for linear inverse problems.
\newblock {\em SIAM journal on imaging sciences}, 2(1):183--202, 2009.

\bibitem{boyd_chebyshev_2001}
John~P. Boyd.
\newblock {\em Chebyshev and {Fourier} spectral methods}.
\newblock Dover Publications, Mineola, N.Y, 2nd ed., rev edition, 2001.

\bibitem{10.1063/1.4871575}
Eric~A. Carlen and Elliott~H. Lieb.
\newblock Remainder terms for some quantum entropy inequalities.
\newblock {\em Journal of Mathematical Physics}, 55(4):042201, 04 2014.

\bibitem{chen1993convergence}
Gong Chen and Marc Teboulle.
\newblock Convergence analysis of a proximal-like minimization algorithm using bregman functions.
\newblock {\em SIAM Journal on Optimization}, 3(3):538--543, 1993.

\bibitem{cytterStochasticDensityFunctional2018}
Yael Cytter, Eran Rabani, Daniel Neuhauser, and Roi Baer.
\newblock Stochastic {{Density Functional Theory}} at {{Finite Temperatures}}.
\newblock {\em Physical Review B}, 97(11):115207, March 2018.

\bibitem{Daubechies_1992}
Ingrid Daubechies.
\newblock {\em Ten Lectures on Wavelets}.
\newblock Society for Industrial and Applied Mathematics, January 1992.

\bibitem{PhysRevLett.70.3631}
David~A. Drabold and Otto~F. Sankey.
\newblock Maximum entropy approach for linear scaling in the electronic structure problem.
\newblock {\em Phys. Rev. Lett.}, 70:3631--3634, Jun 1993.

\bibitem{Efron_1982}
Bradley Efron.
\newblock {\em The Jackknife, the Bootstrap and Other Resampling Plans}.
\newblock Society for Industrial and Applied Mathematics, January 1982.

\bibitem{eldowaGeneralTailBounds2024}
Khaled Eldowa and Andrea Paudice.
\newblock General {{Tail Bounds}} for {{Non-Smooth Stochastic Mirror Descent}}.
\newblock In {\em Proceedings of {{The}} 27th {{International Conference}} on {{Artificial Intelligence}} and {{Statistics}}}, pages 3205--3213. PMLR, April 2024.

\bibitem{fabianStochasticDensityFunctional2019}
Marcel~David Fabian, Ben Shpiro, Eran Rabani, Daniel Neuhauser, and Roi Baer.
\newblock Stochastic density functional theory.
\newblock {\em WIREs Computational Molecular Science}, 9(6), November 2019.

\bibitem{Gygi}
Fran{\c c}ois Gygi.
\newblock All-electron plane-wave electronic structure calculations.
\newblock {\em Journal of Chemical Theory and Computation}, 19(4):1300--1309, 02 2023.

\bibitem{hale2008computing}
Nicholas Hale, Nicholas~J Higham, and Lloyd~N Trefethen.
\newblock Computing a\^{}$\alpha$,$\backslash$log(a), and related matrix functions by contour integrals.
\newblock {\em SIAM Journal on Numerical Analysis}, 46(5):2505--2523, 2008.

\bibitem{10.1063/1.4732310}
Edward~G. Hohenstein, Robert~M. Parrish, and Todd~J. Mart\'{i}nez.
\newblock Tensor hypercontraction density fitting. {I. Quartic scaling second- and third-order M{\o}ller-Plesset perturbation theory}.
\newblock {\em The Journal of Chemical Physics}, 137(4):044103, 07 2012.

\bibitem{10.1063/1.4768241}
Edward~G. Hohenstein, Robert~M. Parrish, C.~David Sherrill, and Todd~J. Mart\'{i}nez.
\newblock Communication: Tensor hypercontraction. {III. Least-squares tensor hypercontraction for the determination of correlated wavefunctions}.
\newblock {\em The Journal of Chemical Physics}, 137(22):221101, 12 2012.

\bibitem{hutchinson1989stochastic}
Michael~F Hutchinson.
\newblock A stochastic estimator of the trace of the influence matrix for laplacian smoothing splines.
\newblock {\em Communications in Statistics-Simulation and Computation}, 18(3):1059--1076, 1989.

\bibitem{KohnSham}
W.~Kohn and L.~J. Sham.
\newblock Self-consistent equations including exchange and correlation effects.
\newblock {\em Phys. Rev.}, 140:A1133--A1138, Nov 1965.

\bibitem{lanOptimalMethodStochastic2012}
Guanghui Lan.
\newblock An optimal method for stochastic composite optimization.
\newblock {\em Mathematical Programming}, 133(1-2):365--397, June 2012.

\bibitem{li2022high}
Shaojie Li and Yong Liu.
\newblock High probability guarantees for nonconvex stochastic gradient descent with heavy tails.
\newblock In {\em International Conference on Machine Learning}, pages 12931--12963. PMLR, 2022.

\bibitem{JCPM2013}
L.~Lin, M.~Chen, C.~Yang, and L.~He.
\newblock Accelerating atomic orbital-based electronic structure calculation via pole expansion and selected inversion.
\newblock {\em J. Phys. Condens. Matter}, 25:295501, 2013.

\bibitem{CMS2009}
L.~Lin, J.~Lu, L.~Ying, R.~Car, and W.~E.
\newblock Fast algorithm for extracting the diagonal of the inverse matrix with application to the electronic structure analysis of metallic systems.
\newblock {\em Comm. Math. Sci.}, 7:755, 2009.

\bibitem{lin2009pole}
Lin Lin, Jianfeng Lu, Lexing Ying, and E~Weinan.
\newblock Pole-based approximation of the fermi-dirac function.
\newblock {\em Chinese Annals of Mathematics, Series B}, 30(6):729--742, 2009.

\bibitem{lindsey2023fastrandomizedentropicallyregularized}
Michael Lindsey.
\newblock Fast randomized entropically regularized semidefinite programming, 2023.

\bibitem{liuPlaneWaveBasedStochasticDeterministicDensity2022}
Qianrui Liu and Mohan Chen.
\newblock Plane-{{Wave-Based Stochastic-Deterministic Density Functional Theory}} for {{Extended Systems}}.
\newblock {\em Physical Review B}, 106(12):125132, September 2022.

\bibitem{liuHighProbabilityConvergence2023}
Zijian Liu, Ta~Duy Nguyen, Thien~Hang Nguyen, Alina Ene, and Huy~L{\^e} Nguyen.
\newblock High {{Probability Convergence}} of {{Stochastic Gradient Methods}}, February 2023.

\bibitem{doi:10.1137/16M1099546}
Haihao Lu, Robert~M. Freund, and Yurii Nesterov.
\newblock Relatively smooth convex optimization by first-order methods, and applications.
\newblock {\em SIAM Journal on Optimization}, 28(1):333--354, 2018.

\bibitem{LU2015329}
Jianfeng Lu and Lexing Ying.
\newblock Compression of the electron repulsion integral tensor in tensor hypercontraction format with cubic scaling cost.
\newblock {\em Journal of Computational Physics}, 302:329--335, 2015.

\bibitem{Meyer2021-pn}
Raphael~A Meyer, Cameron Musco, Christopher Musco, and David~P Woodruff.
\newblock Hutch++: Optimal stochastic trace estimation.
\newblock {\em Proc SIAM Symp Simplicity Algorithms}, 2021:142--155, January 2021.

\bibitem{nemirovskiRobustStochasticApproximation2009}
A.~Nemirovski, A.~Juditsky, G.~Lan, and A.~Shapiro.
\newblock Robust {{Stochastic Approximation Approach}} to {{Stochastic Programming}}.
\newblock {\em SIAM Journal on Optimization}, 19(4):1574--1609, January 2009.

\bibitem{Nemirovski1983problem}
Arkadi Nemirovski and David Yudin.
\newblock Problem complexity and method efficiency in optimization, 1983.

\bibitem{10.1063/1.4768233}
Robert~M. Parrish, Edward~G. Hohenstein, Todd~J. Mart\'{i}nez, and C.~David Sherrill.
\newblock Tensor hypercontraction. {II. Least-squares renormalization}.
\newblock {\em The Journal of Chemical Physics}, 137(22):224106, 12 2012.

\bibitem{szabo1996modern}
Attila Szabo and Neil~S. Ostlund.
\newblock {\em Modern Quantum Chemistry: Introduction to Advanced Electronic Structure Theory}.
\newblock Dover Publications, Inc., Mineola, first edition, 1996.

\bibitem{trefethen2019approximation}
Lloyd~N Trefethen.
\newblock {\em Approximation theory and approximation practice, extended edition}.
\newblock SIAM, 2019.

\bibitem{van1992bi}
Henk~A Van~der Vorst.
\newblock Bi-cgstab: A fast and smoothly converging variant of bi-cg for the solution of nonsymmetric linear systems.
\newblock {\em SIAM Journal on scientific and Statistical Computing}, 13(2):631--644, 1992.

\bibitem{vuralMirrorDescentStrikes2022}
Nuri~Mert Vural, Lu~Yu, Krishna Balasubramanian, Stanislav Volgushev, and Murat~A. Erdogdu.
\newblock Mirror {{Descent Strikes Again}}: {{Optimal Stochastic Convex Optimization}} under {{Infinite Noise Variance}}.
\newblock In {\em Proceedings of {{Thirty Fifth Conference}} on {{Learning Theory}}}, pages 65--102. PMLR, June 2022.

\bibitem{wainwright2019high}
Martin~J Wainwright.
\newblock {\em High-dimensional statistics: A non-asymptotic viewpoint}, volume~48.
\newblock Cambridge University Press, 2019.

\bibitem{whiteFastUniversalKohnSham2020}
A.~J. White and L.~A. Collins.
\newblock Fast and {{Universal Kohn-Sham Density Functional Theory Algorithm}} for {{Warm Dense Matter}} to {{Hot Dense Plasma}}.
\newblock {\em Physical Review Letters}, 125(5):055002, July 2020.

\bibitem{10.1063/1.5007066}
Steven~R. White.
\newblock Hybrid grid/basis set discretizations of the schr\"{o}dinger equation.
\newblock {\em The Journal of Chemical Physics}, 147(24):244102, 12 2017.

\end{thebibliography}

\appendix
\pagebreak{}

\part*{Appendices}

\section{Proofs for elementary facts \label{app:elem} (Section \ref{subsec:Elementary-facts})}
\begin{proof}
[Proof of Lemma \ref{lem:stronglyconvex}.]We can verify by elementary
computations that: 
\begin{align*}
S_{\mathrm{FD}}(X) & =n\left(\Tr\left[\frac{X}{n}\log\left(\frac{X}{n}\right)\right]-\Tr\left[\frac{X}{n}\right]\right)\\
 & \quad\quad+\ \ n\left(\Tr\left[\frac{\mathbf{I}_{n}-X}{n}\log\left(\frac{\mathbf{I}_{n}-X}{n}\right)\right]-\Tr\left[\frac{\mathbf{I}_{n}-X}{n}\right]\right)+n+n\log n\\
 & =n\,S_{\mathrm{VN}}(n^{-1}X)+n\,S_{\mathrm{VN}}(n^{-1}[\mathbf{I}_{n}-X])+n+n\log n,
\end{align*}
 where $S_{\mathrm{VN}}(Y):=\Tr(Y\log Y)-\Tr(Y)$ is the unnormalized
von Neumann entropy on 
\[
\{Y\,:\,Y\succeq0,\ \Tr[Y]\leq1\}.
\]
 Note that both $X$ and $\mathbf{I}_{n}-X$ lie int this domain for
$X\in\mathcal{X}$.

Now $S_{\mathrm{VN}}$ is $1$-strongly convex with respect to $\Vert\,\cdot\,\Vert_{*}$
\cite{10.1063/1.4871575}. It is equivalent (cf., e.g., Proposition
1 of \cite{doi:10.1137/16M1099546}) to say that the Hessian satisfies
\[
\left\langle Z,\nabla^{2}S_{\mathrm{VN}}(Y)\,[Z]\right\rangle \geq\Vert Z\Vert_{*}^{2}.
\]
 But 
\[
\nabla^{2}S_{\mathrm{FD}}(X)=\frac{1}{n}\nabla^{2}S_{\mathrm{VN}}(n^{-1}X)+\frac{1}{n}\nabla^{2}S_{\mathrm{VN}}(n^{-1}[\mathbf{I}_{n}-X]),
\]
 and therefore 
\[
\left\langle Z,\nabla^{2}S_{\mathrm{FD}}(X)\,[Z]\right\rangle \geq\frac{2}{n}\Vert Z\Vert_{*}^{2},
\]
 hence $S_{\mathrm{FD}}$ is $(2/n)$-strongly convex with respect
to $\Vert\,\cdot\,\Vert_{*}$.
\end{proof}
\begin{center}----------------------------------------------------------------------\end{center}
\begin{proof}
[Proof of Lemma \ref{lem:divbound}.] First compute 
\[
S_{\mathrm{FD}}(X_{0})=\Tr\left[\log(\mathbf{I}_{n}/2)\right]=-n\log2.
\]
 Meanwhile it is straightforward to see that $S_{\mathrm{FD}}(X)\leq0$,
since 
\[
x\log x+(1-x)\log(1-x)\leq0
\]
 for all $x\in[0,1]$. Finally, observe that $\nabla S_{\mathrm{FD}}(X_{0})=\log\left((\mathbf{I}_{n}/2)(\mathbf{I}_{n}/2)^{-1}\right)=0$.

Then by definition
\[
D(X\Vert X_{0})=S_{\mathrm{FD}}(X)-S_{\mathrm{FD}}(X_{1})-\left\langle \nabla S_{\mathrm{FD}}(X_{0}),X-X_{0}\right\rangle \leq n\log2,
\]
 as was to be shown.
\end{proof}

\section{Proofs for sub-exponential concentration (Section \ref{subsec:Sub-exponential-concentration})
\label{app:subexp}}
\begin{proof}
[Proof of Corollary \ref{cor:subexp}.] For $t\geq\frac{\nu}{2}$,
we have that $e^{-\frac{t^{2}}{2\nu^{2}}}\leq e^{-\frac{t}{4\nu}}$.
Therefore by Proposition \ref{prop:subexp}, we have 
\[
\P\left[x\geq\mu+t\,\vert\,\mathcal{F}\right]\leq e^{-\frac{t}{4\nu}}
\]
 for all $t\geq\frac{\nu}{2}$. Since the right-hand side is deterministic,
we can take the expectation of both sides and use the tower property
of the conditional expectation to deduce that 
\[
\P\left[x\geq\mu+t\right]\leq e^{-\frac{t}{4\nu}}
\]
 for all $t\geq\frac{\nu}{2}$.

Therefore 
\[
\P\left[x\geq\mu+\frac{\nu}{2}+t\right]\leq e^{-\frac{(t+\nu/2)}{4\nu}}\leq e^{-\frac{t}{4\nu}}
\]
 for all $t\geq0$.

Then solving $e^{-\frac{t}{4\nu}}=\delta$ for $t$, we see that 
\[
x\leq\mu+\frac{\nu}{2}+4\nu\log(1/\delta)
\]
 with probability at least $1-\delta$.
\end{proof}
\begin{center}----------------------------------------------------------------------\end{center}
\begin{proof}
[Proof of Lemma \ref{lem:hutchsubexp}.] Throughout we will simply
treat $A$ as deterministic and aim to show that 
\[
\E\left[e^{\lambda(z^{\top}Az-\Tr[A])}\right]\leq e^{\frac{(2\Vert A\Vert_{\mathrm{F}})^{2}\lambda^{2}}{2}},\quad\text{for all }\vert\lambda\vert\leq\frac{1}{4\Vert A\Vert}.
\]
 We can reduce to this case by disintegration.

Without loss of generality we can also let $A$ be symmetric. (Otherwise,
replace $A$ with its symmetrization $\frac{1}{2}(A+A^{\top})$, which
leaves $z^{\top}Az$ unchanged and moreover cannot expand either the
Frobenius or spectral norms.)

Use the spectral theorem to write $A=UEU^{\top}$ where $U$ is orthogonal
and $E=\mathrm{diag}(\ve_{1},\ldots,\ve_{n})$. Then 
\[
z^{\top}Az=(U^{\top}z)^{\top}E(U^{\top}z)=\sum_{i=1}^{n}\ve_{i}\tilde{z}_{i}^{2},
\]
 where $\tilde{z}:=U^{\top}z$. Note that the components $\tilde{z}_{i}$
are independent standard normal random variables. 

It is well-known \cite{wainwright2019high} that a squared standard
normal random variable is sub-exponential with parameters $(2,4)$,
so 
\begin{align*}
\E\left[e^{\lambda(z^{\top}Az-\Tr[A])}\right] & =\E\left[e^{\lambda\sum_{i}\ve_{i}(\tilde{z}_{i}^{2}-1)}\right]\\
 & =\prod_{i=1}^{n}\E\left[e^{\lambda\ve_{i}(\tilde{z}_{i}^{2}-1)}\right]\\
 & \leq\prod_{i=1}^{n}e^{\frac{2^{2}(\lambda\ve_{i})^{2}}{2}}
\end{align*}
 as long as $\vert\lambda\ve_{i}\vert\leq\frac{1}{4}$ for all $i$,
which in particular holds as long as $\vert\lambda\vert\leq\frac{1}{4\Vert A\Vert}$,
since $\Vert A\Vert=\max_{i}\vert\ve_{i}\vert$. Then since in addition
we have that $\Vert A_{\mathrm{F}}\Vert^{2}=\sum_{i=1}^{n}\ve_{i}^{2}$,
we continue our computation to deduce that 
\[
\E\left[e^{\lambda(z^{\top}Az-\Tr[A])}\right]\leq e^{\frac{(2\Vert A\Vert_{\mathrm{F}})^{2}\lambda^{2}}{2}},
\]
 meaning that $z^{\top}Az$ is sub-exponential with parameters $(2\Vert A\Vert_{\mathrm{F}},4\Vert A\Vert)$.
\end{proof}

\section{Contour integral details \label{app:contour}}

\begin{figure}
\centering{}\includegraphics[width=1\textwidth]{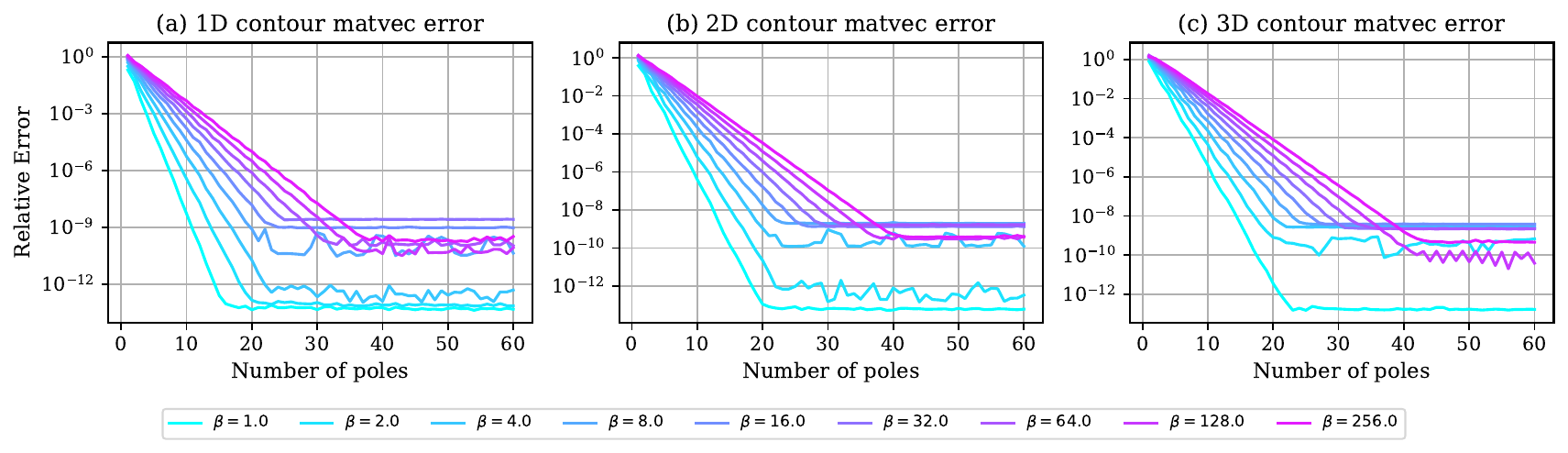}\caption{Relative error of the contour matvec as a function of $N_{p}$ for
various spatial dimensions and inverse temperatures $\beta$. For
the experiments, following the notation of Section \ref{sec:Experiments},
we set the grid sizes to $\mathbf{n}=101$ for 1D, (31,31) for 2D,
and (11,11,11) for 3D, while the corresponding box sizes are $\mathbf{L}=100$
for 1D, $\mathbf{L}=(30,30)$ for 2D, and $\mathbf{L}=(10,10,10)$
for 3D. In generating the external potential, we use $\alpha=0.5$
to define the Yukawa interaction. The matrix here is $C=K+U$, the
vectors are random Gaussian samples, and the error is averaged over
a sample size of $N_{g}=10$. \label{fig:contour-error}}
\end{figure}

We now discuss the use of contour integration to approximate matvecs
by the square-root Fermi-Dirac function of the Hamiltonian, i.e.,
products of the form $X_{t}^{1/2}z=f_{\beta}^{1/2}(H_{t})z$ for arbitrary
vectors $z\in\R^{n}$. The idea, borrowed from \cite{hale2008computing},
is to apply Cauchy's integral theorem to the holomorphic extension
of $f_{\beta}^{1/2}$, i.e., 
\begin{equation}
f_{\beta}^{1/2}(H_{t})z=\left[\int_{\partial\Gamma}g(s)\,(s\mathbf{I}_{n}-H_{t})^{-1}\,ds\right]z\approx g_{N_{p}}(H_{t})z\coloneqq\sum_{i=1}^{N_{p}}w_{i}\,g(s_{i})\,\Big[(s_{i}\mathbf{I}_{n}-H_{t})^{-1}z\Big],\label{eq:contourmatvec}
\end{equation}
where $\Gamma$ is a good (smooth-boundaried) open region covering
all the eigenvalues of $H_{t}$, $g$ is a holomorphic extension of
$f_{\beta}^{1/2}(x)$ from $\R$ to $\Gamma$, $N_{p}$ is the number
of points used to discretize the contour integral, $s_{i}\in\partial\Gamma$
are the discretization points, and $w_{i}$ are quadrature weights
associated to $s_{i}$. Specifically, we apply the same contour and
discretization as in PEXSI \cite{lin2009pole}. (See the rightmost
panel of Figure \ref{fig:extensiondumbbell} below.)

Although it is not immediately obvious that such a holomorphic extension
$g$ exists, we will demonstrate that it does below in Appendix \ref{app:Holomorphic-extensions}.

Then the following theorem, whose proof is identical to its analogue
from \cite{lin2009pole}, bounds the error of this approach.
\begin{thm}
[Section 2.2 of \cite{lin2009pole}] There exists a constant $C$
such that aforementioned contour integral approximation satisfies
\[
\|f_{\beta}^{1/2}(H_{t})-g_{N_{p}}(H_{t})\|=O\big(e^{-CN_{p}/\log(\beta\,\sigma(H))}\big),
\]
 where $\sigma(H)$ is the maximal singular value of $H$.
\end{thm}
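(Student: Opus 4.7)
My plan is to follow the argument of Section 2.2 of \cite{lin2009pole} verbatim in structure, substituting $f_\beta^{1/2}$ for $f_\beta$ throughout. The proof reduces to three ingredients: (i) existence of a holomorphic extension $g$ of $f_\beta^{1/2}$ on a ``dumbbell'' domain $\Gamma$ containing the spectrum of $H_t$, (ii) the integral representation $f_\beta^{1/2}(H_t) = \frac{1}{2\pi i}\int_{\partial\Gamma} g(s)(sI - H_t)^{-1}\,ds$ via Cauchy's theorem, and (iii) geometric convergence of a quadrature rule adapted to $\partial\Gamma$ through a conformal mapping to an annulus.

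The first step is to construct the extension. The function $1 + e^{\beta z}$ has simple zeros at $z_k = (2k+1)\pi i / \beta$, $k \in \mathbb{Z}$, so $f_\beta(z)$ has simple poles there and $f_\beta^{1/2}$ has branch points at the same locations. I would choose branch cuts emanating vertically from each $z_k$ away from the real axis (upward for $k \ge 0$, downward for $k < 0$); the complement of these rays then admits a single-valued holomorphic branch of $(1+e^{\beta z})^{-1/2}$ which agrees with $f_\beta^{1/2}$ on $\mathbb{R}$. This construction, which I would defer to Appendix \ref{app:Holomorphic-extensions}, yields $g$ holomorphic on any dumbbell region enclosing $[\lambda_{\min}(H_t),\lambda_{\max}(H_t)] \subset [-\sigma(H),\sigma(H)]$ and staying bounded away from the branch cuts by at least $\pi/(2\beta)$.

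The second step is Cauchy's theorem, which is immediate once $\Gamma$ contains the spectrum and $g$ is holomorphic on $\overline{\Gamma}$. For the third step, I would adopt exactly the quadrature from \cite{lin2009pole}: a conformal map (built from the Jacobi elliptic sine) sends $\Gamma$ to a rectangle and then to an annulus of modulus $M$, after which the trapezoidal rule on the image circle converges as $e^{-CN_p/M}$. Standard resolvent bounds along $\partial\Gamma$, together with the isometric transformation of the trapezoidal error under the conformal map, then give the operator-norm bound claimed in the statement, exactly as in \cite{lin2009pole}.

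The only nontrivial obstacle, relative to PEXSI, is step (i): one must check that the branch-cut structure of $f_\beta^{1/2}$ permits a domain $\Gamma$ of the same shape and of comparable conformal modulus to the one used for $f_\beta$. Since the branch points coincide with the poles of $f_\beta$, the ``forbidden'' set near the real axis is essentially the same, and the vertical branch cuts do not further shrink the horizontal width or the vertical clearance of $\Gamma$; hence $M = O(\log(\beta\sigma(H)))$, matching PEXSI and yielding the advertised rate $e^{-C N_p / \log(\beta \sigma(H))}$.
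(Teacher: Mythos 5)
Your proposal is correct and follows the same route the paper takes: reduce to the PEXSI argument verbatim, and supply the one missing ingredient, a single-valued holomorphic extension of $f_\beta^{1/2}$ on a dumbbell region avoiding $\{iy:\vert y\vert\geq\pi/\beta\}$, noting that since the branch points of $f_\beta^{1/2}$ coincide with the Matsubara poles of $f_\beta$ the conformal modulus (hence the convergence rate) is unchanged. The paper's Appendix \ref{app:Holomorphic-extensions} simply writes down the branch you describe in closed form; otherwise the arguments are the same.
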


The plots in Figure \ref{fig:contour-error} validates this result,
demonstrating consistent behavior across spatial dimensions and inverse
temperatures.

\subsection{Holomorphic extension \label{app:Holomorphic-extensions}}

Now we focus on the construction of the holomorphic extension $g$
adapted to the dumbbell contour depicted in the rightmost panel of
Figure \ref{fig:extensiondumbbell}. Note that it suffices to construct
a holomorphic extension of $\log f_{\beta}.$ Indeed, if $h(z)$ is
a suitable holmorphic extension of $\log f_{\beta}$, then $\exp(h(z)/2)$
defines an extension for $f_{\beta}^{1/2}$.

Without loss of generality, we assume $\beta=1$ via a horizontal
scaling, since $f_{\beta}(x)=f_{1}(\beta x)$. Then for $\log f_{1}$,
we construct $h$ as:
\[
h(z)\coloneqq\begin{cases}
\log f_{1}(z), & \text{if }\Re(z)\le0,\\
\log|1+\exp(\beta z)|+i\cdot\Big\{\arg\big[1+\exp(\beta z)\big]+2\pi\Big\lfloor\frac{\Im(z)-\pi}{2\pi}\Big\rfloor\Big\}, & \text{if }\Re(z)>0.
\end{cases}
\]
Therefore, for $f_{1}^{1/2},$we construct the holomorphic extension
$g$ as:
\[
g(z)\coloneqq\begin{cases}
f_{1}^{1/2}(z), & \text{if }\Re(z)\le0,\\
\big|f_{1}^{1/2}(z)\big|\cdot\exp\Bigg\{ i\Bigg[\pi\bigg(1-\Big\lfloor\frac{\Im(z)-\pi/2}{\pi}\Big\rfloor\bigg)+\frac{\arg\big[f_{\beta}(z)\big]}{2}\bigg]\Bigg\}, & \text{if }\Re(z)>0.
\end{cases}
\]
All these constructions are holomorphic on $\mathbb{C}\setminus\{iy\,|\,y\in(-\infty,-\pi]\cup[\pi,\infty)\}$,
as illustrated in Figure \ref{fig:extensiondumbbell}. 
\begin{figure}
\begin{centering}
\includegraphics[width=1\textwidth]{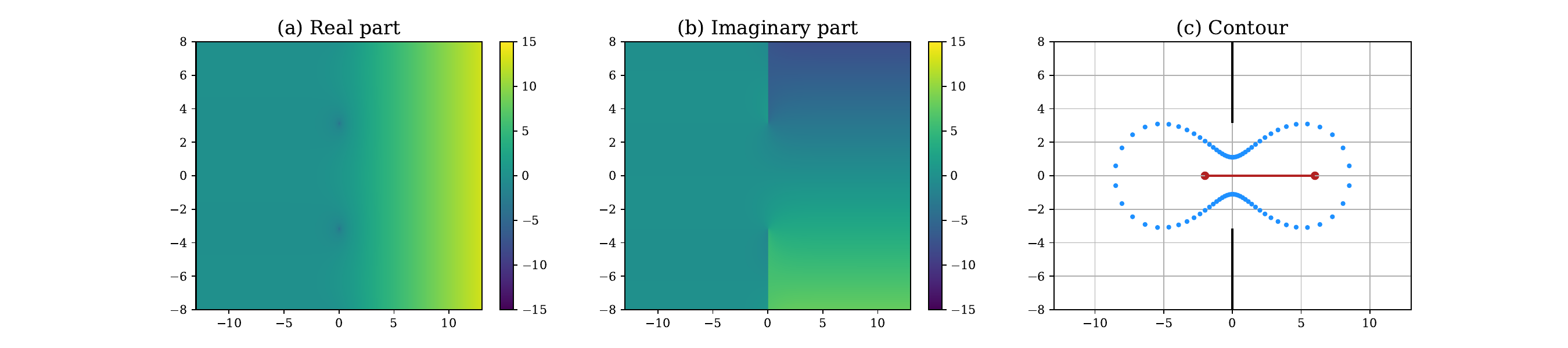}
\par\end{centering}
\caption{Panels (a) and (b) together show the holomorphic extension of $-\log f_{\beta}(x)$.
Panel (c) shows the 20 poles generated for the eigenvalue range $[-2,6]$
and the inverse temperature $\beta=1$. \label{fig:extensiondumbbell}}

\end{figure}

\subsection{Solver and preconditioner \label{app:precondition}}

For our experiments outlined in Section \ref{sec:Experiments}, we
use BiCGSTAB \cite{van1992bi} to solve each linear system 
\[
(s_{i}\mathbf{I}_{n}-H_{t})x=z
\]
 appearing in the right-hand side of (\ref{eq:contourmatvec}).

In our periodic sinc basis, a nice property of the mirror descent
update is that $H_{t}$ can always be written as 
\[
H_{t}=c_{t}K+\text{diag}^{*}(v_{t}),
\]
for some scalar $c_{t}\in\R$ and a suitable effective potential $v_{t}\in\R^{n}$.
Let $\overline{v}_{t}\in\R$ denote the mean of the components of
$v_{t}$. Then our choice of preconditioner in BiCGSTAB is defined
by the linear map 
\[
x\mapsto\big(s_{i}\mathbf{I}_{n}-c_{t}K-\overline{v}_{t}\mathbf{I}_{n}\big)^{-1}x,
\]
 which permits log-linear implementation via FFT.

In Figure \ref{fig:Scaling-of-contour}, we report results indicating
that the time compexity of the contour matvec scales as $O(\sqrt{\beta})$,
independently of the box size.
\begin{figure}
\begin{centering}
\includegraphics[width=0.7\textwidth]{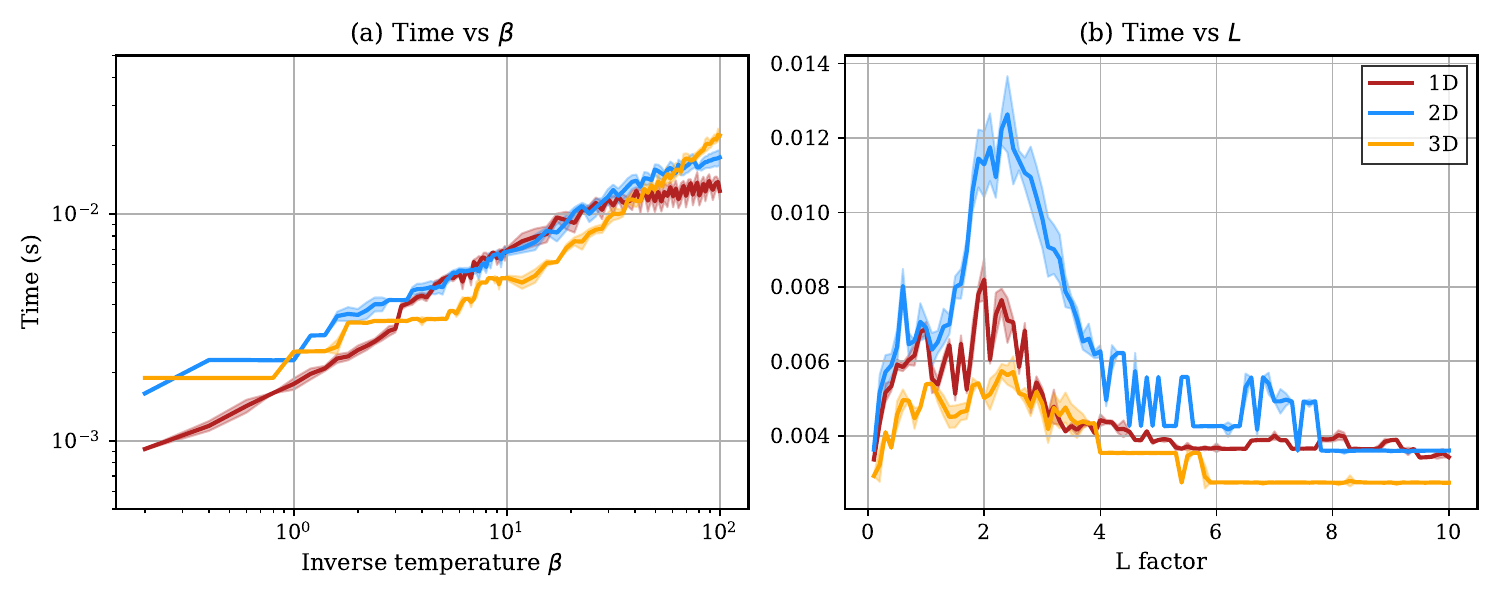}
\par\end{centering}
\caption{Scaling of the wall clock time of a batch of $N_{g}=20$ contour matvecs
with respect to the inverse temperature and the box size. Here we
choose the number of poles $N_{p}$ to ensure that the relative error
is about $10^{-5}$. Following the notation of Section \ref{sec:Experiments},
the base choices of $(\mathbf{n},L)$ for 1D, 2D and 3D are $(101,100),((31,31),30),((11,11,11),10)$.
In the right panel (b), the `$L$ factor' indicates a scaling factor
applied to the base box size $L$. (a) The log-log plot has a slope
close to $1/2$, which indicates the scaling with respect to $\beta$
is approximately $O(\sqrt{\beta})$. (b) Fixing $\beta=10$, the results
indicate that the time complexity is upper-bounded by a constant independent
of the box size.\label{fig:Scaling-of-contour}}
\end{figure}

\subsection{Entropy term in the objective}

We also use the contour integral technique to estimate the objective
in order to plot the objective convergence of our algorithm.

To do so, we must explain how to estimate two additional terms: the
single-electron term $\Tr(CX_{t})$ and the entropy term $\frac{1}{\beta}S_{\text{FD}}(X_{t})$.
The estimation of $\Tr(CX_{t})$ reuses the contour matvec results
for the gradient estimation by rewriting: 
\[
\Tr(C\ensuremath{X_{t})}=\E\left\{ \big[f_{\beta}^{1/2}(H_{t})z_{t}\big]^{\top}C\big[f_{\beta}^{1/2}(H_{t})z_{t}\big]\right\} .
\]

On the other hand, to estimate the entropy function, we must approximate
an alternative matrix function. Indeed, 
\[
S_{\text{FD}}(X_{t})=\E\left\{ z_{t}^{\top}\big[X_{t}\log X_{t}+(\mathbf{I}_{n}-X_{t})\log(\mathbf{I}_{n}-X_{t})\big]z_{t}\right\} .
\]
Therefore, we aim to approximate the matvec 
\[
\big[X_{t}\log X_{t}+(I-X_{t})\log(X_{t})\big]z_{t}
\]
 For simplicity, we can focus on computing 
\[
[X_{t}\log X_{t}]z_{t},
\]
 since the second term is analogous.

Since $X_{t}=f_{\beta}(H_{t})$, we are motivated to apply the contour
integration technique to a suitable holomorphic extension $\tilde{g}$
of $f_{\beta}\log f_{\beta}$. But since we already constructed an
extension for $\log f_{\beta}$, such an extension is recovered easily.
Specifically, for $f_{1}\log f_{1}$, we may construct $\tilde{g}$
as: 
\[
\tilde{g}(z)\coloneqq\begin{cases}
f_{1}(z)\log f_{1}(z), & \text{if }\Re(z)\le0,\\
-\frac{\log|1+\exp(z)|+i\cdot\Big\{\arg\big[1+\exp(z)\big]+2\pi\Big\lfloor\frac{\Im(z)-\pi}{2\pi}\Big\rfloor\Big\}}{1+\exp(z)}, & \text{if }\Re(z)>0.
\end{cases}
\]
Indeed, this extension is also holomorphic on $\mathbb{C}\setminus\{iy\,|\,y\in(-\infty,-\pi]\cup[\pi,\infty)\}$.
Hence, we can use the same contour and discretization as before. Then
the linear solves can be recycled from the earlier computations and
simply weighted differently to construct our objective estimator.

\section{Discussion of more general DFT \label{app:dft}}

In general density functional theory, the energy consists of several
components: 
\[
E(X)=\Tr[CX]+E_{\mathrm{hxc}}(X),
\]
 where 
\[
C_{ij}=\int\psi_{i}(x)\left[-\frac{1}{2}\Delta+v_{\mathrm{ext}}\right]\psi_{j}(x)\,dx
\]
 denotes the matrix of the single-particle part of the quantum chemistry
Hamiltonian in the $\{\psi_{i}\}$ basis, with $v_{\mathrm{ext}}$
denoting the diagonal external potential, and $E_{\mathrm{hxc}}$
denotes the Hartree and exchange-correlation contributions to the
energy.

In turn, 
\[
E_{\mathrm{hxc}}(X)=\tilde{E}(X)+E_{\mathrm{xc}}(X)
\]
 consists of the Hartree energy which we have considered in this work
as well as the \textbf{\emph{exchange-correlation}} energy $E_{\mathrm{xc}}$.

The exchange-correlation energy traditionally it only depends on $X$
via the induced electron density $\rho_{X}$. (Notably, for example,
hybrid functionals may more generally depend on $X$ via the density
matrix $P_{X}$.)

We comment that in particular, local density approximation (LDA) functionals
have been used in stochastic DFT \cite{fabianStochasticDensityFunctional2019}
and assume the form 
\[
E_{\mathrm{xc}}(X)=\int\eps_{\mathrm{LDA}}(\rho_{X}(x))\,dx,
\]
 where $\eps_{\mathrm{LDA}}$ could be a fairly arbitrary function
$\R\ra\R$. Under these circumstances, the exchange-correlation potential
takes the form 
\[
\nabla E_{\mathrm{xc}}(X)=\int\eps_{\mathrm{LDA}}'(\rho_{X}(x))\,\psi_{i}(x)\psi_{j}(x)\,dx.
\]

In order to implement stochastic DFT with an LDA functional, one must
construct $\rho_{X}$ on a spatial grid and evaluate $\eps_{\mathrm{LDA}}'$
pointwise to construct the LDA potential $\eps_{\mathrm{LDA}}'(\rho_{X}(x))$
on the grid, which is then projected back to the basis. Note that
this perspective is compatible with our definition of the vector $\rho(X)$,
which can be interpreted (following the discussion of Section \ref{subsec:Electron-repulsion-integrals})
as the vector of evaluations of the electron density on an interpolating
grid.

There are two difficulties in extending our analysis to a more general
setting. First of all, typically the exchange-correlation functional
fails to be convex, and in fact many local optima may be present.
(In particular, for LDA approximations, $\eps_{\mathrm{LDA}}$ is
not typically convex.) This seriously complicates the convergence
theory for mirror descent, though it is reasonable to expect the theory
to hold qualitatively locally near a local optimizer.

Second, as $\nabla E_{\mathrm{xc}}(X)$ is not generally linear in
$X$, the gradient estimator for the exchange-correlation potential
will be biased, unlike our estimator for the Hartree potential $\nabla\tilde{E}(X)$.
Therefore we do not enjoy the straightforward self-cancellation of
estimation error over the optimization trajectory---captured in our
analysis via the concentration inequality for martingale difference
sequences (cf. Theorem \ref{thm:mds}).

However, we believe that the analysis of the Hartree case alone is
crucial for understanding more general DFT. First of all, it is widely
understood that the Hartree energy is the dominant contribution among
the Hartree and exchange-correlation contributions. Although the exchange-correlation
contribution is extremely relevant chemically, it is quantitatively
much smaller, and therefore the estimation bias may be regarded as
relatively small. If $S$ shots are used in estimator for the density
$\rho(X)$, then formally one expects the bias to scale as $S^{-1}$,
with a preconstant proportional to the overall magnitude of exchange-correlation
contribution. Meanwhile, the unbiased fluctuations (which scale as
$S^{-1/2}$) will enjoy self-cancellation. Additionally, one might
consider bias reduction techniques such as the jackknife \cite{Efron_1982}
for reducing the bias to $S^{-2}$.

Moreover, it is possible to conceive of schemes for general stochastic
DFT that solve the self-consisent Hartree theory as a subroutine,
freezing the exchange-correlation potential within an inner loop in
which the Hartree potential is optimized. Our convergence theory would
be applicable to this inner loop. (We comment that similar two-loop
strategies are used to converge the Hartree-Fock theory as well as
DFT with hybrid functionals.) Since our convergence theory strikingly
suggests that converging the self-consistent Hartree theory to relative
accuracy $\ve$ is about as easy as estimating the Hartree potential
itself to relative accuracy $\ve$, this separation of difficulties
seems worthwhile. Meanwhile, the outer loop itself may be easier to
converge quickly due to the relatively small magnitude of the exchange-correlation
contribution. We leave the implementation and analysis of such directions
to future work.

\section{Proofs for chemical potential optimization (Section \ref{sec:Chemical-potential-optimization})
\label{app:chemical}}
\begin{proof}
[Proof of Lemma \ref{lem:glip}.] The supergradient is 
\[
\partial g_{N,\beta}(\mu)=N-\partial g_{\beta}(\mu),
\]
 where 
\[
\partial g_{\beta}(\mu)=\left\{ \Tr[X]\,:\,X\text{ is a minimizer of }F_{\beta}(X)-\mu\Tr[X]\right\} .
\]
 Hence for $\beta<+\infty$, the function $g_{N,\beta}$ is differentiable,
but in any case $g_{N,\beta}$ is Lipschitz with Lipschitz constant
$n$, since $N-\Tr[X]\in[-n,n]$ for any $X$ satisfying $0\preceq X\preceq\mathbf{I}_{n}$.
\end{proof}
\begin{center}----------------------------------------------------------------------\end{center}
\begin{proof}
[Proof of Lemma \ref{lem:mubound}.] Let us define 
\[
a:=\lambda_{\min}(C)-c_{\mathrm{h}}-\beta^{-1}\log(\gamma^{-1})
\]
 and 
\[
b:=\lambda_{\max}(C)+c_{\mathrm{h}}+\beta^{-1}\log([1-\gamma]^{-1})
\]
 as the left and right endpoints of our desired bounding interval.

Suppose first that $\beta<+\infty$. We know that any optimizer $\mu$
must satisfy $N=\Tr\left[X_{\beta,\mu}\right]$ where $X_{\beta,\mu}$
is the optimizer of (\ref{eq:opt})\@. But recall (\ref{eq:fixedpoint}),
i.e., that 
\[
X_{\beta,\mu}=f_{\beta}\left(\nabla E(X_{\beta,\mu})-\mu\mathbf{I}_{n}\right).
\]

We use the fact that 
\[
f_{\beta}(x)=\frac{1}{1+e^{\beta x}}\leq e^{-\beta x}
\]
 for all $x$ to deduce that 
\[
\Tr\left[X_{\beta,\mu}\right]\leq\Tr\left[\exp\left(-\beta\left[\nabla E(X_{\beta,\mu})-\mu\mathbf{I}_{n}\right]\right)\right].
\]
 Now $\nabla E(X)=C+\nabla\tilde{E}(X)\succeq C-c_{\mathrm{h}}\mathbf{I}_{n}$
by Lemma \ref{lem:Gbound}, so it follows that 
\[
\Tr\left[X_{\beta,\mu}\right]\leq e^{\beta(\mu-\lambda_{\min}(C)+c_{\mathrm{h}})}\,n,
\]
 meaning that $\mu\geq a.$ This gives the desired lower bound for
$\mu$.

Similarly, using the inequality 
\[
f_{\beta}(x)=1-f_{\beta}(-x)\geq1-e^{\beta x},
\]
 we deduce that 
\[
\Tr\left[X_{\beta,\mu}\right]\geq n-\Tr\left[\exp\left(\beta\left[\nabla E(X_{\beta,\mu})-\mu\mathbf{I}_{n}\right]\right)\right],
\]
 which in turn implies that 
\[
1-\gamma\leq e^{\beta(\lambda_{\max}(C)+c_{\mathrm{h}}-\mu)},
\]
 i.e., that $\mu\leq b$ This gives the desired upper bound for $\mu$.

These arguments show that any maximizer must be attained in the desired
interval. But let us verify concretely that a maximizer is in fact
attained. Indeed, note that the same arguments show that if $\mu>a$
holds strictly, then for any $X_{\beta,\mu}$ solving (\ref{eq:opt}),
we have $\Tr\left[X_{\beta,\mu}\right]>N$, i.e., $g_{N,\beta}'(\mu)<0$.
Likewise if $\mu<b$ holds strictly, then for any $X_{\beta,\mu}$
solving (\ref{eq:opt}), we have $\Tr\left[X_{\beta,\mu}\right]<N$,
i.e., $g_{N,\beta}'(\mu)>0$. Together these facts guarantee that
in fact the maximum must be attained on the interval appearing in
the statement of the theorem.

Now consider the limiting case $\beta=+\infty$. The same arguments
show that if $\mu\leq a$, then $\nabla E(X)-\mu\mathbf{I}_{n}\succeq0$
for all $X\in\mathcal{X}$ (and likewise that if $\mu<a$, then $\nabla E(X)-\mu\mathbf{I}_{n}\succ0$).
Therefore if $\mu\leq a$, then $X=0$ is an optimizer of (\ref{eq:opt})
(unique if $\mu<a$). Similarly, if $\mu\geq b$, then $X=\mathbf{I}_{n}$
is an optimizer of (\ref{eq:opt}) (unique if $\mu>b$).

It follows from (\ref{eq:gbeta}) and (\ref{eq:gNmu}) that if $\mu\leq a$,
then $g_{N,\beta}(\mu)=N\mu$, and if $\mu\geq b$, then $g_{N,\beta}(\mu)=(N-n)\mu$.
Thus $g_{N,\beta}$ is non-decreasing for $\mu\leq a$ and non-increasing
for $\mu\geq b$. It follows that a maximum is attained in $[a,b]$.
\end{proof}
\begin{center}----------------------------------------------------------------------\end{center}
\begin{proof}
[Proof of Proposition \ref{prop:chemopt}.] By Theorem \ref{thm:convergence},
we know that 
\begin{equation}
\max_{k=0,\ldots,M}\left|\frac{\hat{g}_{N,\beta,k}}{n}-\frac{g_{N,\beta}(\mu_{k})}{n}\right|\leq O\left(\frac{c_{\mathrm{h}}\,\log(KTm/\delta)}{\sqrt{T}}+\frac{\Vert C\Vert+c_{\mathrm{h}}+\beta^{-1}\log\left([\gamma(1-\gamma)]^{-1}\right)}{T}\right)\label{eq:gNbetak}
\end{equation}
 holds with probability at least $1-\delta$, where we have used the
fact that 
\[
\mu_{k}\in\left[\lambda_{\min}(C)-c_{\mathrm{h}}-\beta^{-1}\log[\gamma^{-1}],\ \lambda_{\max}(C)+c_{\mathrm{h}}+\beta^{-1}\log([1-\gamma]^{-1})\right],
\]
 so 
\[
\Vert C-\mu_{k}\mathbf{I}_{n}\Vert=O\left(\frac{\Vert C\Vert+c_{\mathrm{h}}+\beta^{-1}\log\left([\gamma(1-\gamma)]^{-1}\right)}{T}\right)
\]
 for all $k$.

Recall from Lemma \ref{lem:glip} that $g_{N,\beta}(\mu_{k})$ is
Lipschitz with constant $n$. Moreover, the spacing $h=\mu_{k+1}-\mu_{k}$
satisfies 
\[
h\leq\frac{2\Vert C\Vert+2c_{\mathrm{h}}+\beta^{-1}\log\left([\gamma(1-\gamma)]^{-1}\right)}{M}.
\]
 Let $\mu_{\star}$ denote an optimizer of $g_{N,\beta}.$ We know
by Lemma \ref{lem:mubound} that there must exist some $k$ such that
$\vert\mu_{k}-\mu_{\star}\vert\leq h/2$. Thus the result follows
from Lipschitzness and (\ref{eq:gNbetak}).
\end{proof}

\section{Proofs for complete basis set limit (Section \ref{sec:cbl}) \label{app:cbl}}
\begin{proof}
[Proof of Lemma \ref{lem:stronglyconvextau}.] Similarly to the
proof of Lemma \ref{lem:stronglyconvex}, we can verify by elementary
computations that: 
\begin{align*}
S_{\mathrm{FD}}(X) & =\tau\left(\Tr\left[\frac{X}{\tau}\log\left(\frac{X}{\tau}\right)\right]-\Tr\left[\frac{X}{\tau}\right]\right)\\
 & \quad\quad+\ \ \tau\left(\Tr\left[\frac{\mathbf{I}_{n}-X}{\tau}\log\left(\frac{\mathbf{I}_{n}-X}{\tau}\right)\right]-\Tr\left[\frac{\mathbf{I}_{n}-X}{\tau}\right]\right)+n+n\log\tau\\
 & =\tau\,S_{\mathrm{VN}}(\tau^{-1}X)+\tau\,S_{\mathrm{VN}}(\tau^{-1}[\mathbf{I}_{n}-X])+n+n\log\tau,
\end{align*}
where $S_{\mathrm{VN}}(Y):=\Tr(Y\log Y)-\Tr(Y)$ is the unnormalized
von Neumann entropy on 
\[
\{Y\,:\,Y\succeq0,\ \Tr[Y]\leq1\}.
\]
Assuming that $X\in\mathcal{X}_{\tau}$, we have that $\Tr[X/\tau]\leq1$,
i.e., $\tau^{-1}X$ must lie in this domain for the von Neumann entropy. 

Now $S_{\mathrm{VN}}$ is $1$-strongly convex with respect to $\Vert\,\cdot\,\Vert_{*}$
\cite{10.1063/1.4871575}. It is equivalent (cf., e.g., Proposition
1 of \cite{doi:10.1137/16M1099546}) to say that the Hessian satisfies
\[
\left\langle Z,\nabla^{2}S_{\mathrm{VN}}(Y)\,[Z]\right\rangle \geq\Vert Z\Vert_{*}^{2}.
\]
 But 
\[
\nabla^{2}S_{\mathrm{FD}}(X)=\frac{1}{\tau}\nabla^{2}S_{\mathrm{VN}}(\tau^{-1}X)+\frac{1}{\tau}\underbrace{\nabla^{2}S_{\mathrm{VN}}(\tau^{-1}[\mathbf{I}_{n}-X])}_{\succeq0}.
\]
 We do not have a nonzero lower bound for the second term, but we
do not need it. (The zero lower bound simply follows from convexity.)

It follows that 
\[
\left\langle Z,\nabla^{2}S_{\mathrm{FD}}(X)\,[Z]\right\rangle \geq\frac{1}{\tau}\Vert Z\Vert_{*}^{2},
\]
 hence $S_{\mathrm{FD}}$ is $(1/\tau)$-strongly convex with respect
to $\Vert\,\cdot\,\Vert_{*}$.
\end{proof}
\begin{center}----------------------------------------------------------------------\end{center}
\begin{proof}
[Proof of Lemma \ref{lem:Gestbound-1}.] Following the proof of
Lemma \ref{lem:Gestbound}, we have that $[\hat{\rho}_{t}]_{q}$ is
sub-exponential with parameters $(2[\rho(X_{t})]_{q},4[\rho(X_{t})]_{q})$.
Hence by applying Corollary \ref{cor:subexp} with $\nu=2c_{\Psi}$,
we deduce that 
\[
[\hat{\rho}_{t}]_{q}\leq[\rho(X_{t})]_{q}\left(2+8\log(Tm/\delta)\right)
\]
 holds with probability at least $1-\frac{\delta}{Tm}$.

Then (applying absolute value bars entrywise) it follows that 
\[
\vert V\hat{\rho}_{t}\vert\leq\vert V\vert\,\hat{\rho}_{t}\leq\vert V\vert\,\rho(X_{t})\,\left(2+8\log(Tm/\delta)\right),
\]
 hence by the definiton of $\tilde{c}_{h}$ we have (\ref{eq:chtilde})
\[
\Vert V\hat{\rho}_{t}\Vert_{\infty}\leq\tilde{c}_{\mathrm{h}}\left(2+8\log(Tm/\delta)\right).
\]

Then it follows that 
\[
\Vert\hat{G}_{t}\Vert=\Vert\Psi^{\top}\mathrm{diag}^{*}\left[V\hat{\rho}_{t}\right]\Psi\Vert\leq\Vert V\hat{\rho}_{t}\Vert_{\infty}\leq2\left(1+4\log(Tm/\delta)\right)\tilde{c}_{\mathrm{h}}
\]
 holds for all $t=0,\ldots,T-1$ with probability at least $1-\delta$,
as was to be shown.
\end{proof}
\begin{center}----------------------------------------------------------------------\end{center}
\begin{proof}
[Proof of Lemma \ref{lem:mdsprep-1}.] . Let $Y=X_{\star}-X_{t}$,
and let $y=\left\langle \Delta_{t},Y\right\rangle $. As in the proof
of Lemma \ref{lem:mdsprep}, we have that $y$ is sub-exponential
with parameters $(2\Vert A\Vert_{\mathrm{F}},4\Vert A\Vert)$, conditioned
on $\mathcal{F}_{t-1}$, where $A:=X_{t}^{1/2}\nabla\tilde{E}(Y)X_{t}^{1/2}$.

We can split $A=A_{1}-A_{2}$, where $A_{1}=X_{t}^{1/2}\nabla\tilde{E}(X_{\star})X_{t}^{1/2}$
and $A_{2}=X_{t}^{1/2}\nabla\tilde{E}(X_{t})X_{t}^{1/2}$. Now 
\[
\Vert A_{1}\Vert=\Vert X_{t}^{1/2}\nabla\tilde{E}(X_{\star})X_{t}^{1/2}\Vert\leq\Vert X_{t}^{1/2}\Vert^{2}\,\Vert\nabla\tilde{E}(X_{\star})\Vert\leq\Vert\nabla\tilde{E}(X_{\star})\Vert,
\]
 where we have used the fact that $\Vert X_{t}\Vert\leq1$ in the
last inequality.

Now 
\begin{align*}
\Vert\nabla\tilde{E}(X_{\star})\Vert & =\Vert\Psi^{\top}\mathrm{diag}^{*}[V\rho(X_{\star})]\Psi\Vert\\
 & \leq\Vert V\rho(X_{\star})\Vert_{\infty}\\
 & \le\tilde{c}_{\mathrm{h}},
\end{align*}
 so $\Vert A_{1}\Vert\leq\tilde{c}_{h}$, and similar reasoning shows
$\Vert A_{2}\Vert\leq\tilde{c}_{h}$, hence 
\[
\Vert A\Vert\leq2\tilde{c}_{h}.
\]

Moreover, 
\begin{align*}
\Vert A_{1}\Vert_{\mathrm{F}}^{2} & =\Tr\left[\nabla\tilde{E}(X_{\star})X_{t}\nabla\tilde{E}(X_{\star})X_{t}\right]\\
 & =\left\langle \left(\nabla\tilde{E}(X_{\star})X_{t}\right)^{\top},\nabla\tilde{E}(X_{\star})X_{t}\right\rangle \\
 & \leq\Vert\nabla\tilde{E}(X_{\star})X_{t}\Vert_{\mathrm{F}}^{2}\\
 & \leq\Vert\nabla\tilde{E}(X_{\star})\Vert^{2}\Vert X_{t}\Vert_{\mathrm{F}}^{2}\\
 & \leq\tilde{c}_{\mathrm{h}}^{2}\Vert X_{t}\Vert_{\mathrm{F}}^{2},
\end{align*}
 so $\Vert A_{1}\Vert_{\mathrm{F}}\leq\tilde{c}_{h}\Vert X_{t}\Vert_{\mathrm{F}}$
and similar reasoning shows $\Vert A_{2}\Vert_{\mathrm{F}}\leq\tilde{c}_{h}\Vert X_{t}\Vert_{\mathrm{F}}$.
Therefore 
\[
\Vert A\Vert_{\mathrm{F}}\leq2\tilde{c}_{h}\Vert X_{t}\Vert_{\mathrm{F}},
\]
 which completes the proof.
\end{proof}
\begin{center}----------------------------------------------------------------------\end{center}
\begin{proof}
[Proof of Lemma \ref{lem:TrX0}.] Recall that 
\[
X_{0}=f_{\beta}(H_{0}),
\]
 where $H_{0}=C-\mu\mathbf{I}_{n}$, so 
\begin{align*}
\Tr[X_{0}] & =\sum_{k=1}^{n}f_{\beta}(\lambda_{k}(H_{0}))\\
 & =\sum_{k<c_{\lambda}\mu}f_{\beta}(\lambda_{k}(H_{0}))+\sum_{k\geq c_{\lambda}\mu}f_{\beta}(\lambda_{k}(H_{0}))\\
 & \leq c_{\lambda}\mu+\sum_{k\geq c_{\lambda}\mu}e^{-\beta\lambda_{k}(H_{0})}\\
 & \leq c_{\lambda}\mu+e^{\beta\mu}\sum_{k\geq c_{\lambda}\mu}e^{-\beta\lambda_{k}(C)},
\end{align*}
 where we have used the fact that $f_{\beta}(x)\leq\max(1,e^{-\beta x})$
for all $x$.

Then Assumption \ref{assump:growth} implies in turn that 
\begin{align*}
\Tr[X_{0}] & \leq c_{\lambda}\mu+e^{\beta\mu}\sum_{k\ge c_{\lambda}\mu}e^{-(\beta/c_{\lambda})k}\\
 & =c_{\lambda}\mu+\frac{1}{1-e^{-(\beta/c_{\lambda})}},
\end{align*}
 where we have used the geometric sum formula and the fact that the
first term in the geometric series is bounded above by $e^{-\beta\mu}$.

Then we can use the general inequality $\frac{1}{1-e^{-1/x}}\leq(1+x)$,
which holds for $x\geq0$, to deduce that 
\[
\Tr[X_{0}]\leq c_{\lambda}\mu+(1+\beta^{-1}c_{\lambda})=c_{\lambda}(\mu+\beta^{-1})+1.
\]
\end{proof}
\begin{center}----------------------------------------------------------------------\end{center}
\begin{proof}
[Proof of Lemma \ref{lem:DXstarX0}.] First expand
\[
D(X_{\star}\Vert X_{0})=S_{\mathrm{FD}}(X_{\star})-S_{\mathrm{FD}}(X_{0})+\beta\left\langle C-\mu\mathbf{I}_{n},X_{\star}-X_{0}\right\rangle .
\]
 Now $S_{\mathrm{FD}}\leq0$ always, so immediately we have 
\begin{equation}
D(X_{\star}\Vert X_{0})\leq-S_{\mathrm{FD}}(X_{0})+\beta\left\langle C,X_{\star}-X_{0}\right\rangle .\label{eq:Din1}
\end{equation}

Let us first bound the first term. Since $X_{0}=f_{\beta}(C-\mu\mathbf{I}_{n})$,
we can expand 
\[
-S_{\mathrm{FD}}(X_{0})=\sum_{k=1}^{n}g(\lambda_{k}(C)-\mu),
\]
 where 
\[
g(x):=-f_{\beta}(x)\log f_{\beta}(x)-[1-f_{\beta}(x)]\log[1-f_{\beta}(x)].
\]
 Now $g$ admits the elementary pointwise bound 
\[
g(x)\leq e^{-\beta\vert x\vert/3}
\]
 for all $x\in\R$, so 
\begin{align*}
-S_{\mathrm{FD}}(X_{0}) & \leq\sum_{k=1}^{n}e^{-\frac{\beta}{3}\vert\lambda_{k}(C)-\mu\vert}\\
 & =\sum_{k<c_{\lambda}\mu}e^{-\frac{\beta}{3}\vert\lambda_{k}(C)-\mu\vert}+\sum_{k\geq c_{\lambda}\mu}e^{-\frac{\beta}{3}\vert\lambda_{k}(C)-\mu\vert}\\
 & \leq c_{\lambda}\mu+e^{\beta\mu/3}\sum_{k\geq c_{\lambda}\mu}e^{-\frac{\beta}{3c_{\lambda}}k}
\end{align*}
 where we have used Assumption \ref{assump:growth} in the last line.
Then by summing the geometric series and copying the argument at the
end of the proof of Lemma \ref{lem:TrX0}, we obtain the bound 
\begin{align}
-S_{\mathrm{FD}}(X_{0}) & \leq c_{\lambda}\mu+\frac{1}{1-e^{-\frac{\beta}{3c_{\lambda}}}}\nonumber \\
 & \leq c_{\lambda}\mu+\left(1+3\beta^{-1}c_{\lambda}\right)\nonumber \\
 & =c_{\lambda}(\mu+3\beta^{-1})+1.\label{eq:Din2}
\end{align}

Now we turn to bounding the term $\left\langle C-\mu\mathbf{I}_{n},X_{\star}-X_{0}\right\rangle $
appearing on the right-hand side of (\ref{eq:Din1}). It is useful
to recall (\ref{eq:fixedpoint}), i.e., that the optimizer $X_{\star}$
satisfies 
\[
X_{\star}=f_{\beta}(C-\mu\mathbf{I}_{n}+V_{\star}),\quad\text{where }\ V_{\star}:=\nabla\tilde{E}(X_{\star}).
\]
 Then write 
\begin{align*}
\left\langle C,X_{\star}-X_{0}\right\rangle  & =\left\langle C,X_{\star}\right\rangle -\left\langle C,X_{0}\right\rangle \\
 & =\left\langle C-\mu\mathbf{I}_{n},X_{\star}\right\rangle -\left\langle C-\mu\mathbf{I}_{n},X_{0}\right\rangle +\mu\Tr\left[X_{\star}-X_{0}\right].\\
 & \leq\left\langle C-\mu\mathbf{I}_{n},X_{\star}\right\rangle -\left\langle C-\mu\mathbf{I}_{n},X_{0}\right\rangle ,
\end{align*}
 where in the last line we use the fact that $\Tr[X_{\star}]\leq\Tr[X_{0}]$,
which follows from the same argument demonstrating that $\Tr[X_{t}]\leq\Tr[X_{0}]$
in the proof of Theorem \ref{thm:convergence-1}.

Then we can further manipulate: 
\begin{align*}
\left\langle C,X_{\star}-X_{0}\right\rangle  & \leq\left\langle C-\mu\mathbf{I}_{n}+V_{\star},X_{\star}\right\rangle -\left\langle C-\mu\mathbf{I}_{n},X_{0}\right\rangle -\left\langle V_{\star},X_{\star}\right\rangle \\
 & \leq\left\langle C-\mu\mathbf{I}_{n}+V_{\star},X_{\star}\right\rangle -\left\langle C-\mu\mathbf{I}_{n},X_{0}\right\rangle ,
\end{align*}
 where in the last line we have used the fact that $V_{\star}\succeq0$
(following from Assumption \ref{assump:veenn}).

Then we conclude that 
\[
\left\langle C,X_{\star}-X_{0}\right\rangle \leq\Tr\left[h(H_{\star})-h(H_{0})\right],
\]
 where $H_{0}=C-\mu\mathbf{I}_{n}$, $H_{\star}=C-\mu\mathbf{I}_{n}+V_{\star}$,
and we define 
\[
h(x):=xf_{\beta}(x).
\]
 Then expand in terms of eigenvalues to obtain 
\begin{align}
\left\langle C,X_{\star}-X_{0}\right\rangle  & \leq\sum_{k=1}^{n}\left[h\left(\lambda_{k}(C+V_{\star})-\mu\right)-h\left(\lambda_{k}(C)-\mu\right)\right]\nonumber \\
 & =\sum_{k<c_{\lambda}\mu}\left[h\left(\lambda_{k}(C+V_{\star})-\mu\right)-h\left(\lambda_{k}(C)-\mu\right)\right]\nonumber \\
 & \quad\quad+\ \sum_{k\geq c_{\lambda}\mu}\left[h\left(\lambda_{k}(C+V_{\star})-\mu\right)-h\left(\lambda_{k}(C)-\mu\right)\right].\label{eq:2sums}
\end{align}
 Now it can be verified that $h$ is $2$-Lipschitz (independent of
$\beta$), so the first sum on the right-hand side of (\ref{eq:2sums})
is bounded as 
\begin{align}
 & \sum_{k<c_{\lambda}\mu}\left[h\left(\lambda_{k}(C+V_{\star})-\mu\right)-h\left(\lambda_{k}(C)-\mu\right)\right]\nonumber \\
 & \quad\quad\leq\ 2\sum_{k<c_{\lambda}\mu}\left|\lambda_{k}(C+V_{\star})-\lambda_{k}(C)\right|\nonumber \\
 & \quad\quad\leq\,2c_{\mathrm{h}}c_{\lambda}\mu,\label{eq:1stsum}
\end{align}
 where we have used Weyl's theorem and the fact that $\Vert V_{\star}\Vert\leq c_{\mathrm{h}}$.

Finally, we concentrate on the second sum on the right-hand side of
(\ref{eq:2sums}). Note that for $k\geq c_{\lambda}\mu$, Assumption
(\ref{assump:growth}) implies that $\lambda_{k}(C)-\mu\geq0$, hence
$h\left(\lambda_{k}(C)-\mu\right)\geq0$, hence we can simplify by
dropping the second summand: 
\begin{align*}
 & \sum_{k\geq c_{\lambda}\mu}\left[h\left(\lambda_{k}(C+V_{\star})-\mu\right)-h\left(\lambda_{k}(C)-\mu\right)\right]\\
 & \quad\quad\leq\ \sum_{k\geq c_{\lambda}\mu}h\left(\lambda_{k}(C+V_{\star})-\mu\right).
\end{align*}
Next observe the elementary inequality $\frac{x}{1+e^{x}}\leq e^{-x/2}$,
which holds for all $x\in\R$. If follows that 
\[
h(x)=\frac{x}{1+e^{\beta x}}\leq\beta^{-1}e^{-\beta x/2}
\]
 for all $x$. Thus 
\begin{align*}
\sum_{k\geq c_{\lambda}\mu}h\left(\lambda_{k}(C+V_{\star})-\mu\right) & \leq\beta^{-1}\sum_{k\geq c_{\lambda}\mu}e^{-\beta[\lambda_{k}(C+V_{\star})-\mu]}\\
 & \leq\beta^{-1}\sum_{k\geq c_{\lambda}\mu}e^{-\beta[\lambda_{k}(C)-\mu]}\\
 & \leq\beta^{-1}e^{\beta\mu}\sum_{k\geq c_{\lambda}\mu}e^{-(\beta/c_{\lambda})k}
\end{align*}
 where in the penultimate line we have used the fact that $\lambda_{k}(C+V_{\star})\geq\lambda_{k}(C)$,
which follows from Weyl's monotonicity theorem. Then by the same geometric
series argument as the one at the end of the proof of Lemma \ref{lem:TrX0},
it follows that the second sum on the right-hand side of (\ref{eq:2sums})
is bounded by 
\[
\beta^{-1}\left(1+\beta^{-1}c_{\lambda}\right).
\]

Combining this fact with the bound (\ref{eq:1stsum}) for the first
sum on the right-hand side of (\ref{eq:2sums}) and substituting into
(\ref{eq:2sums}), we conclude that 
\[
\left\langle C,X_{\star}-X_{0}\right\rangle \leq2c_{\mathrm{h}}c_{\lambda}\mu+\beta^{-1}\left(1+\beta^{-1}c_{\lambda}\right).
\]

Therefore, combining with (\ref{eq:Din1}) and (\ref{eq:Din2}), we
obtain 
\[
D(X_{\star}\Vert X_{0})\leq c_{\lambda}(\mu+2\beta c_{\mathrm{h}}\mu+4\beta^{-1})+2.
\]
\end{proof}

\end{document}